\newcommand{\R}{\mathbb R}
\newcommand{\Z}{\mathbb Z}
\newcommand{\T}{\mathbb T}
\newcommand{\hil}{\mathcal{H}}
\newcommand{\sgn}{\text{sgn}}
\newcommand{\jd}{\rangle}
\newcommand{\ji}{\langle}
\numberwithin{equation}{section}
\newtheorem{theorem}{Theorem}[section]
\newtheorem{proposition}[theorem]{Proposition}
\newtheorem{remark}[theorem]{Remark}
\newtheorem*{remarksn}{Remark}
\newtheorem*{remarks}{Remarks}
\newtheorem{lemma}[theorem]{Lemma}
\newtheorem*{TA}{Theorem A}
\newtheorem*{TB}{Theorem B}
\newtheorem*{TC}{Theorem C}
\newtheorem*{claim1}{CLAIM 1}
\newtheorem*{claim2}{CLAIM 2}
\begin{document}
\title[The  dispersion generalized B-O equation]{The IVP for the dispersion generalized  Benjamin-Ono equation in weighted Sobolev spaces}
\author{Germ\'an Fonseca}
\address[G. Fonseca]{Departamento  de Matem\'aticas\\
Universidad Nacional de Colombia\\ Bogota\\Colombia}
\email{gefonsecab@unal.edu.co}
\author{Felipe Linares}
\address[F. Linares]{IMPA\\ Rio de Janeiro\\Brazil}
\email{linares@impa.br}
\thanks{The second author was supported by CNPq and FAPERJ/Brasil}
\author{Gustavo Ponce}
\address[G. Ponce]{Department  of Mathematics\\
University of California\\
Santa Barbara, CA 93106\\
USA.} \email{ponce@math.ucsb.edu}
\thanks{The third author is supported  by  the NSF  DMS-1101499}
\keywords{Benjamin-Ono equation,  weighted Sobolev spaces}
\subjclass{Primary: 35B05. Secondary: 35B60}
\begin{abstract} We study the initial value problem associated to the dispersion generalized  Benjamin-Ono equation.
Our aim is to establish  persistence results in weighted Sobolev spaces and to deduce from them some sharp unique
continuation properties of solutions to this equation. In particular, we shall establish optimal decay rate for the  solutions
of this model.
\end{abstract}

\maketitle

\section{Introduction}

This work is concerned with the initial value problem (IVP)
for the   dispersion generalized Benjamin-Ono (DGBO) equation
\begin{equation}\label{DGBO}
\begin{cases}
\partial_t u +   D^{1+a}\partial_x u +u\partial_x u = 0, \qquad t, x\in \R,\;\; 0<a<1,\\
u(x,0) = u_0(x),
\end{cases}
\end{equation}
where $D^s$ denotes  the homogeneous derivative of order $s\in\R$,
$$
D^s=(-\Delta)^{s/2}\;\;\;\text{so}\;\;\;D^s f=c_s\big(|\xi|^s\widehat{f}\,\big)^{\vee}, \;\;\;\text{with} \;\;\;D^s=(\mathcal H\,\partial_x)^s\;\;\;\text{if}\;\;\;n=1,
$$
where $\hil$ denotes the Hilbert transform,
\begin{equation*}
\begin{split}
\hil f(x)&=\frac{1}{\pi} {\rm p.v.}\big(\frac{1}{x}\ast f\big)(x)\\
&=\frac{1}{\pi}\lim_{\epsilon\downarrow 0}\int\limits_{|y|\ge \epsilon} \frac{f(x-y)}{y}\,dy=(-i\,\sgn(\xi) \widehat{f}(\xi))^{\vee}(x).
\end{split}
\end{equation*}
These equations model vorticity waves in the coastal zone, see \cite{MoSaTz} and references therein.

When $a=1$  the equation in \eqref{DGBO} becomes  the famous Korteweg-de Vries (KdV) equation
\begin{equation}\label{KdV}
\partial_t u- \partial^3_x u +u\partial_x u = 0, \qquad t, x
\in \R,
\end{equation}
and when  $a=0$ the equation in \eqref{DGBO} agrees with the well known Benjamin-Ono (BO) equation
\begin{equation}\label{BO}
\partial_t u +   \mathcal{H}\partial^2_x u +u\partial_x u = 0, \qquad t, x
\in \R.
\end{equation}

Both the  KdV  and the BO  equations originally arise as models in one-dimensional waves propagation
(see \cite{KdV}, \cite{Be}, and \cite{On})
and have widely been studied in many different contexts.  They  present
several similarities: both possess infinite conserved quantities, define Hamiltonian systems, have
multi soliton solutions and  are completely integrable. The local well-posedness (LWP) and global well-posedness (GWP) of their  associated IVP in
the classical Sobolev spaces $H^s(\R),\;s\in\R$ have been extensively investigated.

In the case of the KdV  equation this problem has been studied in  \cite{Sa}, \cite{BoSm}, \cite{Ka}, \cite{KePoVe}, \cite{Bo}, \cite{KePoVe1}, \cite{CoKeStTaT},
and finally \cite{Gu} where global well-posedness was established for $s\geq -3/4$.

 In the case of the BO equation the same well-posedness problem has been considered in  \cite{Sa}, \cite{ABFS}, \cite{Io1}, \cite{Po}, \cite{KoTz1}, \cite{KeKo}, \cite{Ta},
 \cite{MoRi},  \cite{BuPl},
and  \cite{IoKe} where global well-posedness was established for $s\geq 0$ (for further discussion  we refer to  \cite{MoPi}).

  However, there are two remarkable differences between the existence theory for these two models.
The first  is the  fact that  one can give a local existence theory for the IVP associated to the KdV in $H^s(\R)$ based only on  the contraction principle. This can not
 be done in the case of the BO. This is a consequence of the lack of smoothness of the application data-solution in the
 BO setting established in  \cite{MoSaTz}. There it was proved  that this map is
  not locally $C^2$. Actually, in \cite{KoTz2} it was proved that this map is  not even locally
  uniformly continuous.

The second remarkable difference between these equations is concerned with the persistent property  of the solutions  (i.e. if the data $u_0\in X$, a function space, then the corresponding solution $u(\cdot)$ describes  a continuous curve in $X$, $u\in C([-T,T]:X)$, $\,T>0$)
 in weighted Sobolev spaces.
In \cite{Ka} it was shown that the KdV flow preserves the Schwartz class. However, it was first established by Iorio   \cite{Io1} and \cite{Io2} that in
general,  polynomial type decay is not preserved by the BO
flow. The results in \cite{Io1}, \cite{Io2} were recently extended to fractional order
weighted Sobolev spaces in \cite{FoPo}. In order to present  these
results, we  introduce the weighted Sobolev spaces
\begin{equation}
\label{spaceZ}
Z_{s,r} = H^s(\R) \cap L^2(|x|^{2r}dx),\,\,\, s, r \in \R ,
\end{equation}
and
\begin{equation}
\label{spaceZdot}
\dot Z_{s,r}=\{ f\in H^s(\R)\cap L^2(|x|^{2r}dx)\,:\,\widehat {f}(0)=0\},\;\;\;\;\;\;\;s,\,r\in\R.
\end{equation}
The  well-posedness results for the IVP associated to the BO equation in weighted Sobolev spaces  can be stated as:
\begin{TA}$($\cite{FoPo}$)$\label{theorem3}
\begin{itemize}
\item[(i)] Let $s\geq 1, \;r\in [0,s]$, and $\,r<5/2$. If $u_0\in Z_{s,r}$, then the solution $\,u\,$ of the IVP associated to the BO equation \eqref{BO} satisfies that
$$
u\in C([0,\infty):Z_{s,r}).
$$

\item[(ii)] For  $s>9/8$  ($s\geq 3/2$), $\;r\in [0,s]$, and $\,r<5/2$ the IVP associated to the BO equation \eqref{BO} is LWP (GWP resp.) in $Z_{s,r}$.

\item[(iii)] If  $\,r\in [5/2,7/2)$ and $\,r\leq s$, then the IVP \eqref{BO} is GWP
in $\dot Z_{s,r}$.
\end{itemize}

\end{TA}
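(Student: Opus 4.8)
Since \eqref{BO} is globally well posed in $H^s(\R)$ for $s\ge 0$ (see \cite{IoKe}), the plan is to show that the spatial weight $|x|^r$ is propagated by the flow and then to upgrade that bound to strong continuity and continuous dependence. The key reduction is the Plancherel identity $\||x|^r f\|_{L^2}=\|D^r\widehat f\|_{L^2}$, valid for all $r\ge 0$, which turns persistence of $|x|^r u$ in $L^2_x$ into persistence of $D^r\widehat u$ in $L^2_\xi$. Since $\mathcal H$ and $\partial_x$ are Fourier multipliers, $\widehat u$ satisfies
\begin{equation*}
\partial_t\widehat u+i\,\xi|\xi|\,\widehat u+\frac{i}{2}\,\xi\,\widehat{u^2}=0 ,
\end{equation*}
and I would apply $D^r$ in the $\xi$ variable, pair with $D^r\widehat u$ in $L^2_\xi$, and try to close a Gr\"onwall inequality for $\|D^r\widehat u(t)\|_{L^2}$ on an arbitrary interval $[0,T]$, yielding the a priori bound $\sup_{[0,T]}\||x|^r u(t)\|_{L^2}\le C(T;\|u_0\|_{Z_{s,r}})$.

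The nonlinear contribution is the routine part: by the fractional Leibniz rule of Kenig--Ponce--Vega, $D^r(\xi\,\widehat{u^2})=\xi\,D^r\widehat{u^2}+(\text{lower order})$, and $D^r\widehat{u^2}$ corresponds to $|x|^r u^2$, estimated by $\||x|^r u^2\|_{L^2}\lesssim\||x|^r u\|_{L^2}\|u\|_{L^\infty}$; the lower order pieces involve $\||x|^{r'}u\|_{L^2}$ and $\|J^{r'}u\|_{L^2}$ with $r'<r$, controlled by the convexity inequality $\||x|^{r'}u\|_{L^2}\lesssim\||x|^r u\|_{L^2}^{r'/r}\|u\|_{L^2}^{1-r'/r}$ and, for the Sobolev pieces, by $r\le s$ (the role of the hypotheses $s\ge1>1/2$ and $r\le s$ is precisely to furnish $\|u\|_{L^\infty}$ and these intermediate norms). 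With this, once the dispersive term is handled, Gr\"onwall closes.

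The heart of the matter --- and the step I expect to be the main obstacle --- is the dispersive term $D^r(\xi|\xi|\,\widehat u)$. Writing $D^r(\xi|\xi|\,\widehat u)=\xi|\xi|\,D^r\widehat u+E$, the leading term $\xi|\xi|\,D^r\widehat u$ contributes a purely imaginary quantity to the $L^2_\xi$ pairing and drops out of its real part, so everything rests on bounding $\|E\|_{L^2}$. Since $\xi|\xi|$ behaves like $|\xi|^2$, a $D^r$-derivative of it behaves like $|\xi|^{2-r}$, so morally $E\sim|\xi|^{2-r}\widehat u$ modulo terms controlled by interpolation between $\||x|^r u\|_{L^2}$ and $\|J^s u\|_{L^2}$. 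The high-frequency part ($|\xi|>1$) costs at most $\|J^{\max(0,\,2-r)}u\|_{L^2}$, finite since $s\ge1$; the delicate region is $|\xi|\le1$. There, $\widehat u\in C^1_b$ as soon as $r>3/2$ (because then $|x|u\in L^1$), so a Taylor expansion of $\widehat u$ at $\xi=0$ reduces matters to estimating $|\xi|^{2-r}$ itself in $L^2(|\xi|\le1)$ against the constant $\widehat u(0)$, which requires $2-r>-1/2$, i.e. $r<5/2$; this is precisely the source of the restriction $r<5/2$ in $Z_{s,r}$. Collecting all contributions and applying Gr\"onwall proves the a priori bound, hence the persistence statement (i). The thresholds $s>9/8$ and $s\ge3/2$ in (ii) are inherited entirely from the $H^s$ local, respectively global (via conservation of the $L^2$ and $H^{1/2}$ norms of the Benjamin--Ono hierarchy), well-posedness theory quoted above, and are unrelated to the weight, so (ii) follows by combining that theory with the a priori weighted estimate; the promotion of the a priori bounds to $u\in C([0,T]:Z_{s,r})$ and to continuous dependence is carried out by a Bona--Smith type approximation argument, using weak continuity in $Z_{s,r}$ together with continuity of $t\mapsto\||x|^r u(t)\|_{L^2}$.

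For part (iii) the vanishing condition $\widehat f(0)=0$ is preserved by the flow, since $\partial_t\widehat u(0,t)=0$, hence $\widehat u(0,t)\equiv0$. Rerunning the estimate, in the Taylor expansion of $\widehat u$ at $\xi=0$ the obstructing constant term is now absent and the expansion begins with $\xi\,\partial_\xi\widehat u(0,t)$; accordingly the dangerous term behaves like $|\xi|^{2-r}\cdot\xi$ near the origin, which lies in $L^2(|\xi|\le1)$ whenever $3-r>-1/2$, i.e. $r<7/2$. This gives persistence in $\dot Z_{s,r}$ for $5/2\le r<7/2$ (the range $r\in[1,5/2)$ being already covered), and $7/2$ is sharp: from \eqref{BO} and conservation of mass one computes $\frac{d}{dt}\int x\,u(x,t)\,dx=\frac12\|u_0\|_{L^2}^2$, so $\partial_\xi\widehat u(0,t)$ grows linearly in $t$ and cannot stay zero, whereas $u(t)\in L^2(|x|^{2r}dx)$ with $r\ge7/2$ would force $\partial_\xi\widehat u(0,t)=0$; thus no persistence is possible past $r=7/2$ unless $u_0\equiv0$. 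The continuity and continuous-dependence assertions for $\dot Z_{s,r}$ follow as in part (ii).
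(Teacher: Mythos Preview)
Theorem A is not proved in this paper; it is quoted from \cite{FoPo} as background on the Benjamin--Ono case, and the paper's own contribution is the analogous result for the DGBO equation, Theorem \ref{lwp-zsr}, proved in Section 3. So there is no proof here to compare against directly, but your strategy can be measured against that Section 3 argument, which is presumably close to what \cite{FoPo} does.

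Your Fourier-side energy method captures the right heuristics --- in particular the thresholds $r<5/2$ and $r<7/2$ really do arise from the local $L^2$ integrability of $|\xi|^{2-r}$ and $|\xi|^{3-r}$ near the origin --- but the key step ``$D^r(\xi|\xi|\,\widehat u)=\xi|\xi|\,D^r\widehat u+E$ with $E\sim|\xi|^{2-r}\widehat u$'' is not justified when $r$ is non-integer. The symbol $\xi\mapsto\xi|\xi|$ is unbounded, so standard fractional Leibniz rules do not apply to the pair $(\xi|\xi|,\widehat u)$; and you need the \emph{exact} leading piece $\xi|\xi|\,D^r\widehat u$ (so that it vanishes in the real part of the pairing) together with a remainder satisfying the claimed low-frequency asymptotics. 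Producing that decomposition is precisely the work that the paper's approach does on the physical side: Section 3 multiplies the equation by a truncated weight $\langle x\rangle_N^{2\theta}u$, handles integer powers of $x$ through the exact identity $xD^{1+a}\partial_x f=-(2+a)D^{1+a}f+D^{1+a}\partial_x(xf)$ (equation \eqref{e2.B1}) and its iterates, and controls the fractional increments $\theta\in(0,1)$ by the commutator estimates of Lemma \ref{dmp1} and Proposition \ref{dmp2}. Without an analogue of those commutator bounds your Gr\"onwall loop does not close. A smaller point: your sharpness remark in (iii) --- that $u(t)\in L^2(|x|^{2r}dx)$ with $r\ge7/2$ ``would force $\partial_\xi\widehat u(0,t)=0$'' --- is not a consequence of membership in the weighted space per se; membership in $H^{7/2}_\xi$ imposes no pointwise vanishing. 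The actual obstruction is that the singular piece of the error $E$ is proportional to $\partial_\xi\widehat u(0,t)$, and this is what the separate unique-continuation theorems (Theorems B, C here; Theorems \ref{theorem7}--\ref{theorem8} for DGBO) make precise.
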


\begin{TB}$($\cite{FoPo}$)$\label{theorem4}
 Let $u\in C([0,T] : Z_{2,2})$ be a solution of the IVP \eqref{BO}. If   there exist  two different times
 $\,t_1, t_2\in [0,T]$ such that
 \begin{equation}
 \label{2timesw}
 u(\cdot,t_j)\in Z_{5/2,5/2},\;j=1,2,\;\text{then}\;\;\widehat {u}_0(0)=0,\;\,(\text{so}\;\, u(\cdot, t)\in  \dot Z_{5/2,5/2}).
 \end{equation}

\end{TB}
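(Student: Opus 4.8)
The plan is to argue by contradiction: suppose $\widehat{u}_0(0)=\gamma\neq 0$ and show that $u(\cdot,t_2)$ cannot belong to $Z_{5/2,5/2}$. Write $V(t)$ for the unitary group of the linear Benjamin--Ono equation, $\widehat{V(t)f}(\xi)=e^{-it\xi|\xi|}\widehat{f}(\xi)$. Two preliminary reductions. First, $Z_{2,2}\hookrightarrow L^1(\R)$, so $\widehat{u}(0,t)=\int u(x,t)\,dx$ is well defined and continuous on $[0,T]$, and since it is conserved by the flow one has $\widehat{u}(0,t)\equiv\gamma$ for all $t\in[0,T]$. Second, $u(\cdot,t_1)\in Z_{5/2,5/2}\subset Z_{5/2,r}$ for every $r<5/2$, so by Theorem A(i), uniqueness in $Z_{2,2}$ and the symmetry $(x,t)\mapsto(-x,-t)$ of the equation, $u\in C([0,T]:Z_{5/2,r})$ for every $r<5/2$, with $\sup_{s\in[0,T]}\|u(\cdot,s)\|_{Z_{5/2,r}}<\infty$; in particular $\sup_{s\in[0,T]}\|u(\cdot,s)\|_{H^{5/2}}<\infty$.

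The analytic heart is a calculus lemma for the free evolution: if $g\in Z_{5/2,5/2}$ and $t\neq 0$, then
\[
|x|^{5/2}V(t)g=F_t(g)+c\,t\,\widehat{g}(0)\,\Theta_t ,
\]
where $c\neq 0$ is universal, $F_t(g)\in L^2(\R)$ with $\|F_t(g)\|_2\lesssim(1+|t|)^{N}\big(\|g\|_{H^{5/2}}+\||x|^{5/2}g\|_2\big)$, and $\Theta_t\notin L^2(\R)$ is an explicit profile, concentrated at $|x|=\infty$, reflecting the fact that $V(t)g$ carries a $c\,t\,\widehat{g}(0)\,|x|^{-3}$ tail --- precisely the optimal decay rate announced in the abstract. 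The proof is carried out on the Fourier side, where $|x|^{5/2}$ is the multiplier $D^{5/2}$ in $\xi$; one uses a Leibniz-type decomposition of $D^{5/2}(e^{-it\xi|\xi|}\widehat{g})$ adapted to the fact that $\xi\mapsto e^{-it\xi|\xi|}$ is smooth away from $\xi=0$ and only $C^{1,1}$ there (its second derivative jumps by $-4it$ at the origin). That jump, weighted by $\widehat{g}(0)$, produces $c\,t\,\widehat{g}(0)\,\Theta_t$; every other contribution is estimated in $L^2$ by combining commutator estimates for the symbol $e^{-it\xi|\xi|}$ with the interpolation inequalities $\|D^{\alpha}(|x|^{\beta}g)\|_2\lesssim\|g\|_{H^{5/2}}^{\theta}\||x|^{5/2}g\|_2^{1-\theta}$, $\tfrac{\alpha}{5/2}+\tfrac{\beta}{5/2}=1$. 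In particular, when $\widehat{g}(0)=0$ there is no obstruction and $|x|^{5/2}V(t)g=F_t(g)\in L^2$.

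To close, apply $|x|^{5/2}$ to the Duhamel representation with initial time $t_1$,
\[
u(\cdot,t_2)=V(t_2-t_1)u(\cdot,t_1)-\int_{t_1}^{t_2}V(t_2-s)\,(u\partial_x u)(\cdot,s)\,ds .
\]
The left side lies in $L^2$ by hypothesis, and by the calculus lemma $|x|^{5/2}V(t_2-t_1)u(\cdot,t_1)=F_{t_2-t_1}(u(\cdot,t_1))+c(t_2-t_1)\gamma\,\Theta_{t_2-t_1}$ with $F_{t_2-t_1}(u(\cdot,t_1))\in L^2$ (here $u(\cdot,t_1)\in Z_{5/2,5/2}$ is used). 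For the Duhamel term, write $(u\partial_x u)(\cdot,s)=\tfrac12\partial_x(u^2)(\cdot,s)$; then $\widehat{(u\partial_x u)(\cdot,s)}(0)=0$, so its free evolution produces no $|x|^{-3}$ tail and the obstruction term is absent. Using this, the algebra property $u^2\in H^{5/2}$, the bounds $\sup_s\|u(\cdot,s)\|_{Z_{5/2,r}}<\infty$ ($r<5/2$) and $\|u(\cdot,s)\|_2=\|u_0\|_2$, the calculus lemma, the identity $|x|^{5/2}\partial_x w=\partial_x(|x|^{5/2}w)-\tfrac52|x|^{3/2}\sgn(x)w$, and splitting the weight $|x|^{5/2}$ among the factors, one obtains $\big\||x|^{5/2}V(t_2-s)(u\partial_x u)(\cdot,s)\big\|_2\lesssim 1$ uniformly in $s\in[0,T]$, whence by Minkowski's integral inequality $|x|^{5/2}\int_{t_1}^{t_2}V(t_2-s)(u\partial_x u)(\cdot,s)\,ds\in L^2$. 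Combining, $c(t_2-t_1)\gamma\,\Theta_{t_2-t_1}\in L^2$; since $c\neq 0$, $t_2\neq t_1$ and $\Theta_{t_2-t_1}\notin L^2$, we conclude $\gamma=0$, a contradiction. Finally, $\widehat{u}_0(0)=0$ together with $u(\cdot,t_1)\in Z_{5/2,5/2}$ places $u(\cdot,t_1)$ in $\dot Z_{5/2,5/2}$, and Theorem A(iii) (run from $t_1$) gives $u(\cdot,t)\in\dot Z_{5/2,5/2}$ for all $t$.

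The step I expect to be the main obstacle is the calculus lemma together with the estimate of the Duhamel term. Isolating the profile $\Theta_t$ and proving that the complementary part $F_t(g)$ genuinely lies in $L^2$ with a norm controlled only by $\||x|^{5/2}g\|_2$ and lower-order quantities requires a careful interplay between the weighted fractional interpolation inequalities and commutator estimates tailored to the non-smooth symbol $e^{-it\xi|\xi|}$; and since $u\partial_x u$ is only in $H^{3/2}$ (not $H^{5/2}$) and $u$ has only almost-$|x|^{5/2}$ decay at the intermediate times $s\in(t_1,t_2)$, bounding the Duhamel contribution in $L^2(|x|^{5/2})$ is the most delicate point, where the factorization $u\partial_x u=\tfrac12\partial_x(u^2)$ and the vanishing of its mean are essential.
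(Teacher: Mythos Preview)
The paper does not prove Theorem~B; it is quoted from \cite{FoPo}. What the paper does prove is the DGBO analogue, Theorem~\ref{theorem7}, and your plan matches that argument in outline: Duhamel, isolate on the Fourier side the single obstruction coming from the non-smooth symbol at $\xi=0$, observe that the nonlinear input carries an extra $\xi$ so its obstruction is absent, and conclude $\widehat u_0(0)=0$. Your ``calculus lemma'' is the abstract packaging of what the paper does concretely: it computes $\partial_\xi^2\widehat u$ via the explicit decomposition $F_2=B_1+\cdots+B_4$, shows $B_2,B_3,B_4\in H^{\alpha}$ with $\alpha=\tfrac12+a$ (Claim~1), and peels $B_1$ layer by layer (cutoff $\chi$, subtract $1$ from $e^{-it|\xi|^{1+a}\xi}$, subtract $\widehat u_0(0)$) until only $c_1\,t\,|\xi|^a\sgn(\xi)\chi(\xi)\,\widehat u_0(0)$ remains; Proposition~\ref{prop3} then says this lies in $H^{1/2+a}$ iff $\widehat u_0(0)=0$. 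Your $\Theta_t$ is the inverse transform of $\sgn(\xi)\chi(\xi)$, and your $F_t(g)$ bundles everything else.

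The gap is exactly where you flag it. Your calculus lemma is stated for $g\in Z_{5/2,5/2}$, but $u\partial_x u(\cdot,s)$ at intermediate times has only $H^{3/2}$ regularity, so the lemma does not apply as written, and the physical-side identity $|x|^{5/2}\partial_x w=\partial_x(|x|^{5/2}w)-\tfrac52|x|^{3/2}\sgn(x)w$ does not recover the missing derivative (it trades the weight against $\partial_x$, but you then need $|x|^{5/2}V(t-s)(u^2)\in H^1$, which your lemma does not give). The paper closes this in two ways you do not use. First, working directly with the input $\xi\,\widehat{u^2}$, the would-be singular piece of $B_1$ becomes $|\xi|^{1+a}\chi(\xi)\,\widehat{u^2}(0,t')$, which is already in $H^1$, so no delicate cancellation is needed there. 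Second, and more importantly, at the outset of the proof the paper records $u\partial_x u\in L^1([-T,T]:H^{s_0})$ for all $s_0<(1+a)(\tfrac52+a)$, a gain of $(1+a)/2$ derivatives coming from the local smoothing estimates in Proposition~\ref{prop3a}; this is precisely what makes the nonlinear $B_2,B_3,B_4$ contributions land in $H^{\alpha}$ after the $dt'$-integration. Without invoking some form of smoothing (or a sharpened calculus lemma for inputs of the form $\partial_x h$ that trades the derivative for the missing half-order of regularity), your uniform bound $\||x|^{5/2}V(t_2-s)(u\partial_x u)(\cdot,s)\|_2\lesssim 1$ is not justified.
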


 \vskip.1in

 \begin{TC}$($\cite{FoPo}$)$\label{theorem5}
 Let $u\in C([0,T] : \dot Z_{3,3})$ be a solution of the IVP \eqref{BO}. If   there exist  three different times
 $\,t_1, t_2, t_3\in [0,T]$ such that
 \begin{equation}
 \label{3timeus}
 u(\cdot,t_j)\in \dot Z_{7/2,7/2},\;\;j=1,2,3,\;\;\text{then}\;\;\;u(x,t)\equiv 0.
 \end{equation}
\end{TC}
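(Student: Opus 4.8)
The plan is to carry the weighted analysis behind Theorem~A one order of decay past the level needed for Theorem~B. The exponent $7/2$ is critical because the symbol $m(\xi)=\xi|\xi|$ of the linear Benjamin--Ono group has $m''(\xi)=2\,\mathrm{sgn}\,\xi$, a jump: differentiating the free solution three times in $\xi$ lets this jump act on the first Taylor coefficient of $\widehat{u_0}$ at the origin, and the resulting term is not square integrable against $|x|^{7}dx$ unless $\partial_\xi\widehat{u_0}(0)=0$. I will convert this obstruction into an over-determined linear system by reading it off at three distinct times. Write $V(t)=e^{-t\mathcal H\partial_x^2}$ for the unitary group, so $\widehat{V(t)f}(\xi)=e^{-it\,\xi|\xi|}\widehat f(\xi)$. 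Since $u\in C([0,T]:\dot Z_{3,3})$ and $(s,r)=(3,3)$ lies in the range $5/2\le r<7/2$, $r\le s$ of Theorem~A(iii), $u$ is \emph{the} global $\dot Z_{3,3}$-solution and, by uniqueness, $u(t_1+\cdot)$ is the $\dot Z_{3,3}$-solution with datum $u(t_1)$; hence Duhamel's formula
\[
u(t)=V(t-t')u(t')-\tfrac12\int_{t'}^{t}V(t-s)\,\partial_x\big(u^2(s)\big)\,ds
\]
is available for all $t',t\in[0,T]$. I will use the conserved quantities $\widehat{u(t)}(0)=\int u\,dx\equiv 0$ (membership in $\dot Z$) and $\|u(t)\|_{L^2}=\|u_0\|_{L^2}$, and the fact that $x\,u(t)\in L^1$ for every $t$ (because $u(t)\in\dot Z_{3,3}$), so that $\partial_\xi\widehat{u(t)}(0)=-i\int x\,u(x,t)\,dx$ is well defined and continuous in $t$. (Theorem~B applied at $t_1,t_2$ only returns $\widehat{u_0}(0)=0$, already known, so a genuinely new first-moment input is needed.)

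\emph{Step 1 (the obstruction at $7/2$).} I claim that if $t'\ne t$ and both $u(t'),u(t)\in\dot Z_{7/2,7/2}$ then
\[
c_1\,(t-t')\,\partial_\xi\widehat{u(t')}(0)+c_2\,(t-t')^2\,\|u_0\|_{L^2}^2=0
\]
for two universal nonzero constants $c_1,c_2$. To prove this, apply $|x|^{7/2}$ to the Duhamel identity and isolate the part that fails to lie in $L^2(|x|^{7}dx)$. For the free term, differentiate $\widehat{V(t-t')u(t')}=e^{-i(t-t')m}\,\widehat{u(t')}$ three times in $\xi$: the $\delta_0$-contribution $m'''\,\widehat{u(t')}(0)$ vanishes because $\widehat{u(t')}(0)=0$, and the only remaining piece that is not already in $\dot H^{1/2}_\xi$ near $\xi=0$ is the one in which the jump $m''=2\,\mathrm{sgn}\,\xi$ multiplies $\partial_\xi\widehat{u(t')}(0)$; after applying the remaining half-derivative $D^{1/2}_\xi$ this yields, modulo $L^2(|x|^{7}dx)$, a fixed profile $\psi$ (independent of the data, with $|x|^{7/2}\psi\notin L^2$) times the coefficient $c_1(t-t')\,\partial_\xi\widehat{u(t')}(0)$. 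The same mechanism applied to $\partial_x(u^2(s))$ in the Duhamel integral produces a multiple of the same profile $\psi$, whose coefficient now involves $\widehat{u^2}(s,0)=\|u(s)\|_{L^2}^2=\|u_0\|_{L^2}^2$; integrating the extra factor $(t-s)$ carried by $V(t-s)$ gives $\int_{t'}^{t}(t-s)\,ds=\tfrac12(t-t')^2$, so the contribution of this term is $\tfrac{c_2}{2}(t-t')^2\,\|u_0\|_{L^2}^2\,\psi$ modulo $L^2(|x|^{7}dx)$. Since $|x|^{7/2}u(t)\in L^2$, the total coefficient of $\psi$ must vanish, which is the claimed identity.

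\emph{Step 2 (three times).} Dividing by $t-t'\ne 0$ gives $c_1\,\partial_\xi\widehat{u(t')}(0)+c_2(t-t')\,\|u_0\|_{L^2}^2=0$. Taking $t'=t_1$ with $t=t_2$ and then with $t=t_3$ yields two relations sharing the same value $\partial_\xi\widehat{u(t_1)}(0)$; subtracting them gives $c_2(t_2-t_3)\,\|u_0\|_{L^2}^2=0$, and since $c_2\ne 0$ and $t_2\ne t_3$ we conclude $\|u_0\|_{L^2}=0$, i.e.\ $u_0\equiv 0$, hence $u\equiv 0$ by uniqueness. This is exactly why three distinct times are required: with $t'$ fixed the obstruction is an affine function of $t$ whose two coefficients are $\partial_\xi\widehat{u(t')}(0)$ and a multiple of $\|u_0\|_{L^2}^2$, so two evaluations at $t\ne t'$ are needed to force the latter to be zero.

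The hard part is Step~1: showing that after subtracting the explicit profiles the remainders genuinely lie in $L^2(|x|^{7}dx)$. For the free term this is a borderline refinement of the weighted linear estimates behind Theorem~A, the delicate point being the $D^{1/2}_\xi$ (equivalently $|x|^{1/2}$) bound near the origin for functions carrying the $\mathrm{sgn}\,\xi$ jump coming from $m''$ --- precisely the mechanism that makes $r=7/2$ the endpoint. For the Duhamel term one must bound $|x|^{7/2}\int_{t'}^{t}V(t-s)\partial_x(u^2(s))\,ds$ using only $u\in C([t',t]:\dot Z_{3,3})$; since $\partial_x(u^2)$ only belongs to an $L^2(|x|^{6}dx)$-type space, one splits the weight between the two factors of $u^2$, absorbs derivatives via $u\in H^3$, redistributes the $|x|^{7/2}$ weight with fractional Leibniz rules and the $A_2$-weighted bounds for $D^{b}$, and uses conservation of $\|u(s)\|_{L^2}$ to make the profile coefficient explicit and independent of $s$. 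Granting these weighted estimates, the moment computations and the concluding linear algebra are elementary.
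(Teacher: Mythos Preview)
Your approach is correct and follows the same Fourier-side obstruction analysis that the paper carries out in Section~5 for Theorem~\ref{theorem8}, the dispersion-generalized analogue of Theorem~C (Theorem~C itself is only cited from \cite{FoPo}, not reproved here). In both arguments one differentiates the Duhamel identity in $\xi$ to the appropriate order, isolates the unique term carrying the non-$L^2$ singularity arising from the jump in $m''$, and reads off that its coefficient must vanish; in the BO case this coefficient is precisely $\int_{t'}^{t}\int xu(x,s)\,dx\,ds$, which equals your $c_1(t-t')\partial_\xi\widehat{u(t')}(0)+c_2(t-t')^2\|u_0\|_2^2$ once one uses the first-moment identity $\int xu(\cdot,s)\,dx=\int xu(\cdot,t')\,dx+\tfrac{s-t'}{2}\|u_0\|_2^2$.

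The only real difference is the endgame. The paper applies the obstruction to the pairs $(t_1,t_2)$ and $(t_2,t_3)$, uses the intermediate value theorem on $F(s)=\int xu(x,s)\,dx$ to produce two distinct zeros $\tilde t_1\in(t_1,t_2)$ and $\tilde t_2\in(t_2,t_3)$, and then invokes strict monotonicity of $F$ (from $F'=\tfrac12\|u_0\|_2^2$) to force $u_0=0$. You instead fix the base point $t'=t_1$, read off the affine relation at $t=t_2$ and $t=t_3$, and subtract directly. Your route is slightly more economical and bypasses the IVT step; the paper's route makes the role of the first moment more visible and connects naturally to the two-times result in Theorem~\ref{theorem9}. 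Both are elementary once Step~1 is in hand.
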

\vskip.1in

We point out that Iorio's results correspond to  the indexes $s\geq r=2$ in Theorem A part (ii), $s\geq r=3$  in Theorem A part (iii)
and $s\geq r=4$ in Theorem C.

Regarding the DGBO equation \eqref{DGBO}, we notice that for $\,a\in(0,1)\,$ the dispersive effect is stronger than  the one
for the BO equation but  still too weak compared to that of the KdV equation. Indeed it was shown
in \cite{MoSaTz} that for the IVP associated to the DGBO equation \eqref{DGBO} the flow map data-solution
from $H^s(\R)$ to $C([0,T]\,:\,H^s(\R))$ fails to be locally $C^2$ at the
origin for any $T>0$ and any $s\in\R$ as in the case of the BO equation. Therefore, so far  local well-posedness
in classical Sobolev spaces $H^s(\R)$ for \eqref{DGBO} cannot be obtained by an
argument based only on the contraction principle.  Local well-posedness in classical Sobolev spaces for \eqref{DGBO}
has been studied in  \cite{KePoVe},  \cite{He}, \cite{MoRi2}, \cite{Gu2}, and \cite{HIKK} where local well-posedness was established for $s\geq 0$.

  Real solutions of the IVP \eqref{DGBO} satisfy  at least three conserved quantities:
\begin{equation}
\begin{aligned}
\label{laws}
&\;I_1(u)=\int_{-\infty}^{\infty}u(x,t)dx,\;\;\;\;I_2(u)=\int_{-\infty}^{\infty}u^2(x,t)dx,\\
&\;I_3(u)=\int_{-\infty}^{\infty}\,(|D^{\frac{1+a}{2}}u|^2+\frac
{u^3}{6})(x,t)dx.
\end{aligned}
\end{equation}

In particular, we have that the local results in \cite{HIKK} extend globally in time.

Concerning the form of the traveling wave solution of \eqref{DGBO} it is convenient to consider
$$
 v(x,t)=-u(x,-t),
 $$
 where $u(x,t)$ satisfies equation \eqref{DGBO}. Thus,
  \begin{equation}
  \label{BOneg}
  \partial_t v -D^{1+a}\partial_x v +v\partial_x v = 0, \qquad t, x\in\R,\;\;\;\;\;0\leq a\leq1,
  \end{equation}
Traveling wave solutions of  \eqref{BOneg} are solutions of the form
$$
v(x,t)=c^{1+a}\,\phi_a(c(x-c^{1+a}\,t)),\;\;\;\;\;\;c>0,
$$
where $\,\phi_a\,$ is called the ground state, which is an even, positive, decreasing (for $x>0$) function.
In the case of the KdV equation ($a=1$ in \eqref{BOneg}) one has that
$$
\phi_1(x)=\,\frac{3}{2}\,\text{sech}^2\left(\frac{x}{2}\right),
$$
whose uniqueness follows by elliptic theory.

In the case of the BO equation ($a=0$ in  \eqref{BOneg})
one has that
\begin{equation}
\label{AA1}
\phi_0(x)=\frac{4}{1+x^2},
\end{equation}
whose uniqueness (up to symmetry of the equation)  was established in \cite{CAJT}.

In the case $a\in (0,1)$ in \eqref{BOneg} the existence of the ground state was established in \cite{We}
by variational arguments. Recently, uniqueness of the ground state for $a\in (0,1)$  was established in \cite{RFEZ}. However, no explicit formula
is know for $\,\phi_a, \,a\in (0,1)$. In \cite{MKR} the following upper bound for the decay of the ground state
was deduced
$$
\phi_a(x)\leq \frac{c_a}{(1+x^2)^{1+a/2}},\hskip15pt 0<a<1.
$$

Thus, one has that for $a\in [0,1)$ the ground state has a very mild decay in comparison with that for the KdV equation $a=1$.
Roughly speaking, this is a consequence of the non-smoothness of the symbol modeling the dispersive relation in \eqref{DGBO} $\,\sigma_a(\xi)=
|\xi|^{1+a}\,\xi$.

Our  goal in this work is to extend the results in Theorems A-C for the DGBO equation \eqref{DGBO},
by proving persistent properties of solution of \eqref{DGBO}
in the  weighted Sobolev spaces \eqref{spaceZ}. This will lead us to obtain some optimal uniqueness properties of solutions of this equation
as well as  to establish what is the maximum rate of decay of a solution of \eqref{DGBO}.

In order to motivate our results  we first recall the fact  that for dispersive equations the decay of the data is preserved by the solution only if they have enough regularity.
More precisely,  persistence property of the solution $u=u(x,t)$ of the IVP \eqref{DGBO}
 in the weighted Sobolev spaces $Z_{s,\,r}$
can only hold if $\,s\geq (1+a)r$.
This can be seen from the fact that the linear part of the equation \eqref{DGBO}
\begin{equation}
\label{commu}
L=\partial_t+D^{1+a}\partial_x\;\;\;\text{commutes with }\;\;\;\;\Gamma= x-(a+2)tD^{1+a}.
\end{equation}
 Hence,
it is natural to consider well-posedness in the weighted Sobolev spaces $Z_{s,\,r},\,s\geq (1+a)r$.

Let us state our main results:

\begin{theorem}\label{lwp-zsr}
{\rm (a)} Let $a\in (0,1)$. If $u_0\in Z_{s,r}$, then the solution $\,u\,$ of
the IVP \eqref{DGBO} satisfies $u\in C([-T,T]\,:\,Z_{s,r})$ if either
\begin{enumerate}
\item[(i)] $s\geq (1+a)$ and  $r\in(0,1]$,

or

\item[(ii)] $s\geq 2(1+a)$ and  $r\in (1,2]$,

or

\item[(iii)] $s\geq [(r+1)^-](1+a)$ and $\;2<r<5/2+a$, with $[\cdot]$ denoting the integer part function.
\end{enumerate}
{\rm(b)}  If $u_0\in \dot{Z}_{s,r}$, then the solution $u$ of the IVP
\eqref{DGBO} satisfies
$$
u\in C([-T,T]\,:\,\dot{Z}_{s,r}),
$$
 whenever
\begin{enumerate}
\item[(iv)] $s\geq [(r+1)^-](1+a)\;$ and $\;5/2+a\le r<7/2+a$.
\end{enumerate}
\end{theorem}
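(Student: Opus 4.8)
The plan is to establish the persistence property in $Z_{s,r}$ (and $\dot Z_{s,r}$) by combining the already available local well-posedness theory in classical Sobolev spaces $H^s(\R)$ (from \cite{HIKK}, valid for $s\ge 0$, with global extension via the conserved quantities \eqref{laws}) with a priori weighted estimates obtained from the commuting vector field $\Gamma = x-(a+2)tD^{1+a}$ in \eqref{commu}. The overall strategy is the standard one in this circle of ideas: derive the evolution equation for $\Gamma^k u$ (or, for fractional $r$, for a suitable interpolation between $\Gamma^{[r]}u$ and $\Gamma^{[r]+1}u$), bound the weighted norm $\||x|^r u\|_{L^2}$ in terms of $\|\Gamma^{\lceil r\rceil}u\|$ plus lower-order terms involving $tD^{1+a}u$, and close a Gronwall-type inequality on $[-T,T]$. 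The hierarchy $s\ge(1+a)$, $s\ge 2(1+a)$, $s\ge[(r+1)^-](1+a)$ reflects exactly how many times one can apply $\Gamma$ while keeping the output in $L^2$: each application of $\Gamma$ costs one unit of weight but demands $(1+a)$ units of regularity, because $\Gamma u = xu - (a+2)tD^{1+a}u$ and the term $D^{1+a}u$ must lie in $L^2$ for the whole expression to be controlled.

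First I would treat case (i), $r\in(0,1]$, $s\ge 1+a$. Here one works with $\Gamma u$ directly: applying $\Gamma$ to \eqref{DGBO} and using \eqref{commu} gives $\partial_t(\Gamma u)+D^{1+a}\partial_x(\Gamma u) = -\Gamma(u\partial_x u)$, and the nonlinear commutator $[\Gamma, u\partial_x](u)$ must be estimated. The worst term is $x(u\partial_x u)$, which one rewrites using $x u\partial_x u = u\,\partial_x(xu) - u^2$ so the derivative falls on $xu$ rather than producing $x\partial_x u$; the $tD^{1+a}(u\partial_x u)$ piece is handled by the fractional Leibniz rule (Kato–Ponce) together with the $H^s$ bound on $u$. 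Taking the $L^2$ inner product with $\Gamma u$, the skew-adjointness of $\partial_t + D^{1+a}\partial_x$ kills the linear contribution, and one is left with $\frac{d}{dt}\|\Gamma u\|_{L^2}^2 \lesssim (1+\|u\|_{H^s})\|\Gamma u\|_{L^2}^2 + (1+|t|)\,C(\|u\|_{H^s})$; Gronwall then gives the bound on $[-T,T]$. The fractional weight $|x|^r$ for $r<1$ is then recovered by interpolating $\||x|^r u\|_{L^2}\le\|u\|_{L^2}^{1-r}\||x|u\|_{L^2}^{r}$ (and $\||x|u\|_{L^2}$ is controlled by $\|\Gamma u\|_{L^2}+(a+2)|t|\|D^{1+a}u\|_{L^2}$, the latter finite since $s\ge 1+a$). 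Case (ii) is the same scheme one level up: one needs $\Gamma^2 u\in L^2$, which forces $D^{2(1+a)}u\in L^2$, hence $s\ge 2(1+a)$; the algebra is heavier because $\Gamma^2(u\partial_x u)$ produces terms like $x^2\partial_x u$, again handled by moving derivatives via the product rule, plus mixed terms $x\,tD^{1+a}(\cdots)$ and $t^2 D^{2(1+a)}(u\partial_x u)$, the last of which requires a commutator estimate for $D^{2(1+a)}$ against multiplication by $u$ — this is where the Kato–Ponce fractional Leibniz rule and its commutator variant are essential. Case (iii), $2<r<5/2+a$, with $s\ge[(r+1)^-](1+a)$, continues this by first getting $\Gamma^{[r]}u\in L^2$ and $\Gamma^{[r]+1}u\in L^2$ at the endpoint, then interpolating for the intermediate weight, using a Stein-type interpolation or the three-lines lemma as in \cite{FoPo}.

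The main obstacle — and the point where I expect most of the genuine work to lie — is the appearance, when $r$ crosses $5/2$ (the threshold $5/2+a$ in this scaled version, cf. the value $5/2$ in Theorem A), of a nontrivial low-frequency obstruction that forces $\widehat{u}(0,t)=0$, i.e. the passage to $\dot Z_{s,r}$ in part (b). Concretely, when one computes $\Gamma^{3}u$ (needed for $r\ge 5/2+a$), the leading contribution to $\||x|^r u\|$ involves a term of the form $t\,\mathcal H\partial_x(u^2)$-type (from iterating $D^{1+a}$ against the nonlinearity and tracking the non-smoothness of $\sigma_a(\xi)=|\xi|^{1+a}\xi$ near $\xi=0$), which is in $L^2(|x|^{2r}dx)$ only if the relevant Fourier multiplier applied to $u^2$ vanishes sufficiently at the origin; since $\widehat{u^2}(0)=\|u\|_{L^2}^2\ne 0$ in general, one gets finiteness only after imposing $\widehat{u_0}(0)=0$, which by $I_1$ conservation propagates to $\widehat{u}(0,t)=0$ for all $t$. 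Making this obstruction precise — identifying exactly the singular term, showing it is genuinely non-integrable against $|x|^{2r}$ for $r\ge 5/2+a$ unless the mean-zero condition holds, and showing the remaining terms are fine under that condition — is the delicate step; it parallels the argument behind Theorem B and the sharpness statements in \cite{FoPo}, \cite{Io1}, \cite{Io2}, and in this dispersion-generalized setting the non-smooth symbol $|\xi|^{1+a}$ makes the relevant computation of $\widehat{\Gamma^k u}$ near $\xi=0$ more involved than for BO. Throughout, one also needs to justify a priori that the formal manipulations are legitimate (a standard approximation/truncation argument: regularize the data, work with a weight $\langle x\rangle^r/(1+\epsilon\langle x\rangle^r)$ and pass to the limit using the uniform bounds), which I would relegate to a lemma rather than detail in the main proof.
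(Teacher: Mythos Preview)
Your $\Gamma$-based strategy for part (a) is a reasonable alternative to the paper's argument, though the paper does not actually iterate the commuting vector field. Instead it derives the equation satisfied by $x^k u$ via the algebraic identity $xD^{1+a}\partial_x f = D^{1+a}\partial_x(xf)-(2+a)D^{1+a}f$ (so the ``extra'' terms $D^{1+a}u$, $D^a\mathcal H u$, $D^{a-1}u$, \dots appear explicitly), and for fractional weights multiplies by a \emph{truncated} weight $\langle x\rangle_N^{2\theta}$ and closes energy estimates using the commutator bounds of Lemma~\ref{dmp1} and Proposition~\ref{dmp2}. Your interpolation between $\Gamma^{[r]}u$ and $\Gamma^{[r]+1}u$ would need $\Gamma^{[r]+1}u\in L^2$, which near the threshold $r\nearrow 5/2+a$ already touches the obstruction you want to avoid; the paper's truncated-weight method sidesteps this by never going above the target weight.

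For part (b) your identification of the obstruction is wrong, and this is a genuine gap. The singularity that forces $\widehat{u_0}(0)=0$ does \emph{not} come from the nonlinearity: in the Duhamel term the integrand carries the factor $\xi\,\widehat{u^2}(\xi)$, so the extra $\xi$ already supplies the vanishing at the origin, and $\widehat{u^2}(0)=\|u\|_2^2\neq 0$ is irrelevant (note also that $\widehat{u_0}(0)=0$ in no way forces $\widehat{u^2}(0)=0$, so your proposed mechanism could not work). The obstruction is purely linear: iterating $\partial_\xi$ on the symbol $|\xi|^{1+a}\xi$ produces $|\xi|^{a-1}$ (and lower powers), and the term $|\xi|^{a-1}\chi(\xi)\widehat{u_0}(\xi)$ fails to have the required $H^\theta$-regularity near $\xi=0$ precisely when $\theta\ge a+1/2$ unless one can Taylor-expand $\widehat{u_0}(\xi)=\xi\,\partial_\xi\widehat{u_0}(0)+O(\xi^2)$. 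That expansion needs $\widehat{u_0}(0)=0$, which is exactly the $\dot Z$ hypothesis; the paper then uses Proposition~\ref{prop3} to quantify this. Without locating the singularity correctly you would not see why $\widehat{u_0}(0)=0$ is the right (and sufficient) condition, nor be able to close the estimate in (iv).
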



\begin{theorem}\label{theorem7}
 Let $u\in C([-T,T] : Z_{s, (5/2+a)^-})$ with
 $$
 T>0\;\;\;\;\;\text{and}\;\;\;\;\,s\geq (1+a)(5/2+a)+(1-a)/2
 $$
  be a solution of the
 IVP \eqref{DGBO}. If   there exist  two times $\,t_1, t_2\in [-T,T]$, $t_1\neq t_2$,  such that
\begin{equation}\label{2timesmean}
u(\cdot,t_j)\in Z_{s,5/2+a},\;\;j=1,2.
\end{equation}
Then
\begin{equation}\label{a4}
\widehat{u}(0,t)\!=\!\int u(x,t)dx=\!\int u_0(x)\,dx\!=\widehat{u}_0(0)=0\hskip8pt\text{for all}\hskip8pt t\in[-T,T].
\end{equation}
\end{theorem}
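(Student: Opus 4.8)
The plan rests on two facts. The first is the conservation of $I_1(u)=\int u(x,t)\,dx=\widehat u(0,t)$ from \eqref{laws}, which reduces \eqref{a4} to proving $\widehat u(0,t_1)=0$. The second is that the dispersion symbol $\sigma_a(\xi)=|\xi|^{1+a}\xi$ is of class $C^2$ but not $C^3$ at the origin, with $\partial_\xi^3\sigma_a(\xi)\sim|\xi|^{a-1}$ near $\xi=0$: this is exactly the defect that the free evolution transfers to a solution and that obstructs decay of order $5/2+a$ unless $\widehat u(0,\cdot)$ vanishes. Write $\{U(t)\}$ for the free group, $\widehat{U(t)f}(\xi)=e^{-it\sigma_a(\xi)}\widehat f(\xi)$, and recall that $\widehat{|x|^{r}f}=c_r\,D^{r}_\xi\widehat f$, so that $u(\cdot,t)\in L^2(|x|^{2r}dx)$ is equivalent to $D^{r}_\xi\widehat u(\cdot,t)\in L^2$. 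By Theorem \ref{lwp-zsr} the hypothesis $u\in C([-T,T]:Z_{s,(5/2+a)^-})$ is consistent, and the lower bound on $s$ gives $u$ (hence $u^2$ and $u\partial_x u$) the Sobolev regularity needed below.

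First I would compare the two prescribed times via Duhamel's formula: with $\tau:=t_2-t_1\neq0$,
\[
\widehat u(\xi,t_2)=e^{-i\tau\sigma_a(\xi)}\,\widehat u(\xi,t_1)-\int_{t_1}^{t_2}e^{-i(t_2-t')\sigma_a(\xi)}\,\widehat{(u\partial_x u)}(\xi,t')\,dt',
\]
and apply $D^{5/2+a}_\xi$ to both sides. By \eqref{2timesmean}, $D^{5/2+a}_\xi\widehat u(\cdot,t_j)\in L^2$ for $j=1,2$. The central nonlinear claim is that $D^{5/2+a}_\xi$ of the Duhamel integral over $[t_1,t_2]$ is also in $L^2$. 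Near $\xi=0$ this uses the cancellation $\widehat{(u\partial_x u)}(0,t')=\frac12\int\partial_x(u^2)\,dx=0$, which removes the $|\xi|^{-1/2}$ singularity from the term with all $\xi$-derivatives on $e^{-i(t_2-t')\sigma_a(\xi)}$ (what remains is $O(|\xi|^{1/2})$ after integrating in $t'$); for $|\xi|\gtrsim1$ one keeps $e^{-i(t_2-t')\sigma_a(\xi)}\widehat{(u\partial_x u)}(\xi,t')$ undifferentiated and runs the weighted energy method built on the vector field $\Gamma=x-(a+2)tD^{1+a}$ of \eqref{commu}, the interpolation between weights and derivatives, and the smoothing gained by writing $u\partial_x u=\frac12\partial_x(u^2)$ inside the time integral, the $(1-a)/2$ extra derivatives in the hypothesis on $s$ being precisely the budget for this. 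Granting this, subtracting yields
\[
D^{5/2+a}_\xi\big[e^{-i\tau\sigma_a(\xi)}\,\widehat u(\xi,t_1)\big]\in L^2 .
\]

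Next comes the extraction of $\widehat u(0,t_1)=0$. Fix $\chi\in C_c^\infty(\R)$ with $\chi\equiv1$ near $0$ and write $\widehat u(\xi,t_1)=\widehat u(0,t_1)\,\chi(\xi)+g(\xi)$, so that $g\in L^2$, $D^{5/2+a}_\xi g\in L^2$ and $g(0)=0$; since $5/2+a>3/2$, Sobolev embedding gives $g\in C^1$ near $0$ with $g(\xi)=O(|\xi|)$. Using the pointwise fractional Leibniz rule and the fact that $e^{-i\tau\sigma_a(\xi)}$ is bounded with bounded first and second derivatives near $0$ and is $C^\infty$ away from $0$, one checks that $D^{5/2+a}_\xi\big[g(\xi)e^{-i\tau\sigma_a(\xi)}\big]\in L^2$: the only delicate term is $g(\xi)\,\big(D^{5/2+a}_\xi e^{-i\tau\sigma_a}\big)(\xi)\sim g(\xi)|\xi|^{-1/2}=O(|\xi|^{1/2})$ near $0$. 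Hence $\widehat u(0,t_1)\,D^{5/2+a}_\xi\big[\chi(\xi)e^{-i\tau\sigma_a(\xi)}\big]\in L^2$. Taylor-expanding $e^{-i\tau\sigma_a(\xi)}=1-i\tau\,\sgn(\xi)|\xi|^{2+a}+O(|\xi|^{2(2+a)})$ near $0$: the constant contributes nothing, the $O(|\xi|^{2(2+a)})$ remainder is sent by $D^{5/2+a}_\xi$ into $L^2$ since $2(2+a)>5/2+a$, $D^{5/2+a}_\xi\big[\sgn(\xi)|\xi|^{2+a}\big]=c_a\,\sgn(\xi)|\xi|^{-1/2}$ with $c_a\neq0$, and the part of $D^{5/2+a}_\xi[\chi e^{-i\tau\sigma_a}]$ coming from where $\chi\not\equiv1$ is smooth near $0$ because $\sigma_a$ is $C^\infty$ away from the origin and $5/2+a>2+a$. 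Since $|\xi|^{-1/2}\notin L^2(\{|\xi|\le1\})$, the above forces $\widehat u(0,t_1)\,\tau\,c_a=0$, hence $\widehat u(0,t_1)=0$, and the conservation of $I_1$ gives \eqref{a4}.

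The step I expect to be the main obstacle is the nonlinear claim in the second paragraph: showing that $D^{5/2+a}_\xi$ of the Duhamel integral genuinely lies in $L^2$, and not merely its most singular formal piece. The subtlety is that $|x|^{5/2+a}U(t_2-t')$ costs $(1+a)(5/2+a)$ derivatives, so the nonlinearity $u\partial_x u$ seems to demand one more derivative than the hypothesis on $s$ provides; one recovers it from the cancellation $\widehat{(u\partial_x u)}(0,t')=0$, from redistributing the weight through the interpolation lemmas, and from the smoothing effect of the $\partial_x$ combined with the time integration---the net of which is exactly the margin $(1-a)/2$. The remaining ingredients---the identity $D^{5/2+a}_\xi[\sgn(\xi)|\xi|^{2+a}]=c_a\,\sgn(\xi)|\xi|^{-1/2}$ with $c_a\neq0$, and the fractional Leibniz bookkeeping near versus away from $\xi=0$---are routine given the preliminary lemmas.
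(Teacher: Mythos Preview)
Your strategy matches the paper's: use Duhamel, isolate the contribution coming from the non-smoothness of $\sigma_a$ at $\xi=0$, and show that the only term that can fail to lie in the weighted space is a multiple of $\widehat u_0(0)$ times a function that is genuinely not in $L^2$. The conservation of $I_1$ then finishes. So the architecture is right.

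There is, however, a technical gap in how you pass from $D^{5/2+a}_\xi$ of a product to the ``delicate term'' $g(\xi)\,(D^{5/2+a}_\xi e^{-i\tau\sigma_a})(\xi)$. There is no pointwise fractional Leibniz rule at order $5/2+a>1$; the Stein square-function identity \eqref{pointwise2} and the product estimate \eqref{*1b} in the paper are valid only for exponents in $(0,1)$. The paper avoids this by never applying a fractional derivative of order larger than one to a product: it first takes two honest $\xi$-derivatives of \eqref{duh1} (producing the explicit terms $B_1,\dots,B_4$), and then asks whether each $B_j$ lies in $H^{\alpha}(\R)$ with $\alpha=1/2+a\in(0,1)$. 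At that stage Proposition~\ref{prop**}, Proposition~\ref{propositionB}, and the pointwise Leibniz bound \eqref{pointwise2} are legitimately available, and the singular term is identified as $c_1 t\,|\xi|^{a}\operatorname{sgn}(\xi)\chi(\xi)\,\widehat u_0(0)$, whose membership in $H^{1/2+a}$ is decided by Proposition~\ref{prop3}. Your computation $D^{5/2+a}_\xi[\operatorname{sgn}(\xi)|\xi|^{2+a}]=c_a\operatorname{sgn}(\xi)|\xi|^{-1/2}$ is the formal shadow of this, but the route to it needs the integer/fractional split.

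On the nonlinear term you correctly flag the difficulty, but the resolution is simpler than the energy-method/$\Gamma$-commutator machinery you sketch. The paper writes $\widehat{u\partial_x u}=\tfrac{i}{2}\xi\,\widehat{u^2}$, so when the same $\partial_\xi^2$ hits the Duhamel integrand, the would-be singular $B_1$-piece carries an extra factor of $\xi$ and becomes $c\,(t-t')\,|\xi|^{1+a}\chi(\xi)\,\widehat{u^2}(0,t')$, which is already in $H^1$; no smoothing effect or weighted interpolation is needed for this step (those tools appear only in checking that $B_2,B_3,B_4$ and their integral counterparts are in $H^{\alpha}$, which is routine given the hypotheses on $s$). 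Your cancellation $\widehat{u\partial_x u}(0,t')=0$ is exactly this observation, but phrased in a way that makes the bookkeeping look harder than it is.
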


 \begin {remarks} \hskip10pt
\begin{enumerate}
\item[a)]
 Theorem \ref{theorem7}  shows that persistence in $Z_{s,r}$  with $r=(5/2+a)^-$ is the best possible for
 general initial data. In fact, it shows that for data $u_0\in Z_{s, r}, \,s\geq (1+a)r+(1-a)/2$, $r\geq 5/2+a$ with
 $\, \widehat{u}_0(0)\neq 0$ the corresponding solution
 $u=u(x,t)$ verifies that
 $$
|x|^{ (5/2+a)^-} u\in L^{\infty}([0,T] :L^2(\R)),\;\;\;\;\;T>0,
$$
but there does not exist  a non-trivial solution $u$ corresponding to data $u_0$ with  $\,\widehat{u}_0(0)\neq 0$ such that
$$
|x|^{ 5/2+a} u\in L^{\infty}([0,T'] :L^2(\R)),\;\;\;\;\;\text{for some}\;\;\;\;T'>0.
$$

(

 \item[b)] The result in Theorem \ref{lwp-zsr} for $s=1+a$ was established in \cite {CoKeSt}.
 \end{enumerate}
   \end{remarks}

\begin{theorem}\label{theorem8}
Let $u\in C([-T,T] : Z_{s, (7/2+a)^-})$ with
$$
T>0 \hskip15pt\text{and}\hskip 15pt s\geq (1+a)(7/2+a)+\frac{1-a}{2},
$$
 be a solution of
the IVP \eqref{DGBO}. If   there exist  three different times
 $\,t_1, t_2, t_3\in [-T,T]$ such that
 \begin{equation} \label{3timesvanish}
 u(\cdot,t_j)\in \dot{Z}_{s,7/2+a},\;\;j=1,2, 3.
 \end{equation}
 Then
 \begin{equation*}
 u\equiv 0.
 \end{equation*}
\end{theorem}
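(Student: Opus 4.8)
The plan is to mirror the strategy used for Theorem \ref{theorem7}, but now iterated one weight-level higher and exploiting \emph{three} times rather than two. By Theorem \ref{theorem7} (applicable since $7/2+a>5/2+a$ and the regularity hypothesis here is stronger), the existence of two of the three times $t_j$ with $u(\cdot,t_j)\in Z_{s,5/2+a}\supset \dot Z_{s,7/2+a}$ already forces $\widehat u(0,t)\equiv 0$; in particular $u(\cdot,t)\in\dot Z_{s,\,r}$ for all $t\in[-T,T]$ and every $r\le 7/2+a$ for which the norm is finite, and the persistence statement of Theorem \ref{lwp-zsr}(b) (case (iv)) applies. So from the outset we may work in the mean-zero class, where the operator $D^{1+a}$ gains an extra derivative when acting against $|x|$-weights and no logarithmic/boundary obstruction appears at the level $r=5/2+a$.

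Next I would run the weighted energy estimate for $\Gamma u$, where $\Gamma=x-(a+2)tD^{1+a}$ is the symmetry vector field from \eqref{commu}. Since $L$ commutes with $\Gamma$, applying $\Gamma^{\!k}$ (here $k$ up to roughly $7/2+a$, handled via fractional powers $\langle x\rangle^{7/2+a}$ combined with the flow) to the Duhamel formula produces, after using the conservation laws \eqref{laws} and the local smoothing/maximal-function machinery that underlies Theorem \ref{lwp-zsr}, an identity of the form
\begin{equation*}
\||x|^{7/2+a}u(t)\|_{L^2}\;\lesssim\;\||x|^{7/2+a}u(t')\|_{L^2}+|t-t'|\,\Phi\big(\widehat u\big)+\text{(controlled terms)},
\end{equation*}
where $\Phi(\widehat u)$ is an explicit functional of $u$ — essentially $\partial_\xi^{k}$ of $\widehat u$ evaluated at $\xi=0$, coming from the singular point of the symbol $|\xi|^{1+a}\xi$ after $k$ differentiations. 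The point of having three times is that the obstruction to $|x|^{7/2+a}u\in L^2$ at two times is precisely the non-vanishing of one such functional (a first moment–type quantity), and its value at the third time is determined by the evolution to be an affine (in $t$) function that cannot vanish at three distinct times unless it is identically zero; pushing that to its vanishing then forces $u\equiv 0$ by the uniqueness part of the well-posedness theory, exactly as in the BO case Theorem C.

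The main obstacle — and where the bulk of the technical work lies — is the commutator analysis for $[\,|x|^{7/2+a},D^{1+a}\partial_x]$ acting on mean-zero functions: one must show that $|x|^{7/2+a}D^{1+a}\partial_x u$ is controlled by $\|u\|_{Z_{s,7/2+a}}$ together with the single scalar obstruction $\Phi(\widehat u)$, with the regularity threshold $s\ge (1+a)(7/2+a)+(1-a)/2$ being exactly what makes the fractional Leibniz/commutator estimates close (the $(1-a)/2$ slack absorbing the non-smoothness of the symbol at the origin, as in Theorem \ref{theorem7}). I would isolate this in a lemma of Calderón–commutator type, likely proved by writing $|\xi|^{1+a}\xi$ as a smooth part plus a homogeneous correction and estimating the latter via the Fourier-support decomposition already in use for the local theory. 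Once that lemma is in hand, the three-times argument is a short deduction: evaluate the affine-in-$t$ obstruction at $t_1,t_2,t_3$, conclude it vanishes identically, and invoke uniqueness.
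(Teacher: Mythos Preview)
Your high-level strategy is right---reduce to mean zero via Theorem~\ref{theorem7}, isolate a single scalar obstruction to $|x|^{7/2+a}u(\cdot,t)\in L^2$, and exploit its time-dependence---but the core mechanism is misidentified. You describe the obstruction as ``an affine (in $t$) function that cannot vanish at three distinct times unless it is identically zero''; if the obstruction were affine, two times would already force it to vanish and the third time would be superfluous. What actually happens (the paper works entirely in Fourier space, computing $\partial_\xi^4\widehat u(\xi,t)$ and sorting the eleven terms $E_1,\ldots,E_{11}$ that arise from differentiating $e^{-it|\xi|^{1+a}\xi}\widehat u_0$, not via the vector field $\Gamma$ or a physical-space commutator lemma) is that, fixing a reference time $t_1=0$, the sole obstruction to $D_\xi^{a-1/2}\partial_\xi^4\widehat u(\cdot,t)\in L^2$ is the quantity
\[
\int_0^{t}\!\!\int xu(x,t')\,dx\,dt'.
\]
Since $F(t')=\int xu(x,t')\,dx$ is affine in $t'$ (indeed $F(t')=F(0)+\tfrac{t'}{2}\|u_0\|_2^2$ from the equation), the obstruction is the \emph{time-integral} of an affine function. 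The three-times hypothesis is then used pairwise: from $t_1,t_2$ the vanishing of $\int_{t_1}^{t_2}F$ yields, by continuity, a zero $\tilde t_1\in(t_1,t_2)$ of $F$; from $t_2,t_3$ a second zero $\tilde t_2\in(t_2,t_3)$. An affine $F$ with two distinct zeros is identically zero, so $\|u_0\|_2^2=2F'=0$ and $u\equiv 0$. No appeal to ``uniqueness of the IVP'' is needed at the last step.

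Your proposed route via a commutator estimate for $[\,|x|^{7/2+a},D^{1+a}\partial_x\,]$ might in principle be made to work, but the displayed inequality you wrote is not the right object: one needs an exact \emph{if-and-only-if} identification of the obstruction, not an upper bound, and that obstruction must carry the extra time-integral. In the paper's Fourier-side analysis this falls out cleanly: the singular contribution comes only from $E_1$ and $E_5$, each leaving a residue proportional to $t\,|\xi|^{a-1}\chi(\xi)\,\partial_\xi\widehat u_0(0)$ (i.e.\ the first moment), and the Duhamel piece supplies the $\int_0^t$ through an integration by parts in $t'$. Without pinning down this precise form you cannot explain why three times, and not two, are needed.
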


\begin{remarks}\hskip10pt
\begin{enumerate}
\item[a)] Theorem \ref{theorem8} shows that the decay $r=(7/2+a)^-$ is
the largest possible. More precisely, Theorem \ref{lwp-zsr} part (b) tells us that there are non trivial solutions $u=u(x,t)$ verifying
 $$
|x|^{ (7/2+a)^-} u\in L^{\infty}([0,T] :L^2(\R)),\;\;\;\;\;T>0,
$$
and Theorem \ref{theorem8} guarantees that there does not exist a non-trivial solution such that
$$
|x|^{ 7/2+a} u\in L^{\infty}([0,T'] :L^2(\R)),\;\;\;\;\;\text{for some}\;\;\;\;T'>0.
$$
\item[b)] We shall prove this result in the most general case $s=(1+a)(7/2+a)+\frac{1-a}{2}.$ Also, we
will carry out the details in the case $a\in [1/2,1).$ It will be clear from our argument how to extend
the result to the case $a\in (0,1/2).$
\end{enumerate}
\end{remarks}

\begin{theorem}\label{theorem9}
Let $u\in C([-T,T] : Z_{s, (7/2+a)^-})$ with
$$
 T>0\;\;\;\;\;\text{and}\;\;\;\;\;s\geq (1+a)(7/2+a)+(1-a)/2,
 $$
  be a solution
of the IVP \eqref{DGBO}. If there exist $\,t_1, t_2\in [-T,T]$, $t_1\neq t_2$ such that
\begin{equation}\label{2timesvanish}
\begin{split}
u(\cdot,t_j)\in \dot{Z}_{s,7/2+a},\;\;j=1,2,\\
\intertext{and}
\hskip2cm\int xu(x,t_1) dx=0\;\;\,\,\,\text{or}\;\;\,\,\,\, \int xu(x,t_2) dx=0,
\end{split}
\end{equation}
then
\begin{equation*}
u\equiv 0.
\end{equation*}
\end{theorem}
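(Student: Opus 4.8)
The plan is to run the argument of Theorem~\ref{theorem8}, using the vanishing of the first moment at one of the two times in place of the third special time.

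First I would place $t_1$ at the base of Duhamel's formula. Writing $\sigma_a(\xi)=|\xi|^{1+a}\xi$ for the dispersion symbol, the integral equation reads
\[
\widehat{u}(\xi,t)=e^{-i\sigma_a(\xi)(t-t_1)}\,\widehat{u}(\xi,t_1)-\frac{i\xi}{2}\int_{t_1}^{t}e^{-i\sigma_a(\xi)(t-\tau)}\,\widehat{u^2}(\xi,\tau)\,d\tau .
\]
Since $u(\cdot,t_1)\in\dot{Z}_{s,7/2+a}$ one has $\widehat{u}(0,t_1)=0$, and since $I_1=\widehat{u}(0,t)$ is conserved this gives $\widehat{u}(0,t)\equiv 0$ on $[-T,T]$. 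Also $u(\cdot,t_j)\in Z_{s,7/2+a}$ gives $\widehat{u}(\cdot,t_j)\in H^{7/2+a}\subset C^{3}$ for $j=1,2$, so near $\xi=0$ we may write $\widehat{u}(\xi,t_1)=d_1\xi+\tfrac{d_2}{2}\xi^{2}+\tfrac{d_3}{6}\xi^{3}+R_1(\xi)$, with $R_1\in H^{7/2+a}$ vanishing to third order at $0$ and $d_1=\partial_\xi\widehat{u}(0,t_1)=-i\int x\,u(x,t_1)\,dx$.

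Next I would isolate the unique obstruction: the claim is that, modulo a function that stays in $H^{7/2+a}$ uniformly for $t\in[-T,T]$, near the origin (up to a fixed cut-off)
\[
\widehat{u}(\xi,t)=(\text{smooth})+\Big(-i(t-t_1)\,d_1-\frac{(t-t_1)^{2}}{4}\,I_2\Big)\,|\xi|^{3+a}+\cdots ,
\]
where $I_2=\int u^{2}\,dx$ is conserved (so $\widehat{u^2}(0,\tau)=I_2$ for all $\tau$) and the dots are higher singular powers $\sgn(\xi)|\xi|^{4+a},\,|\xi|^{4+2a},\dots$, all in $H^{7/2+a}$. Indeed, expanding $e^{-i\sigma_a(\xi)s}=1-i\sigma_a(\xi)s+O(\sigma_a(\xi)^{2})$ and using $\sigma_a(\xi)\cdot\xi=|\xi|^{3+a}$, a $|\xi|^{3+a}$ singularity can only arise by pairing one factor of $\sigma_a$ with the linear part $d_1\xi$ of $\widehat{u}(\cdot,t_1)$, and one factor of $\sigma_a$ with the constant $\widehat{u^2}(0,\tau)=I_2$ carried by the extra $\xi$ in front of the Duhamel integral (the $\tau$--integration of $(t-\tau)$ then producing the $(t-t_1)^{2}/4$); every other contribution — the polynomial parts, the higher powers, and the remainders built from $R_1$ and from $\widehat{u^2}(\cdot,\tau)-I_2$ — is of strictly higher order and lies in $H^{7/2+a}$. (Equivalently, in physical space the obstruction is an explicit function decaying like $|x|^{-4-a}$, which belongs to $L^{2}(|x|^{2r}dx)$ exactly for $r<7/2+a$.) Granting this, $u(\cdot,t_2)\in Z_{s,7/2+a}$ forces $\widehat{u}(\cdot,t_2)\in H^{7/2+a}$, so the coefficient of $|\xi|^{3+a}$ must vanish at $t=t_2$:
\[
-i(t_2-t_1)\,d_1-\frac{(t_2-t_1)^{2}}{4}\,I_2=0 ,\qquad\text{i.e.}\qquad \int x\,u(x,t_1)\,dx=-\frac{t_2-t_1}{4}\,I_2 ,
\]
after dividing by $t_2-t_1\neq0$ and using $d_1=-i\int x\,u(x,t_1)\,dx$. (This is consistent with the elementary identity $\tfrac{d}{dt}\int x\,u\,dx=\tfrac12 I_2$, so two special times by themselves do not force triviality; this is exactly where the moment hypothesis enters, in the same way a third time does in Theorem~\ref{theorem8}.) Hence, if $\int x\,u(x,t_1)\,dx=0$ then $(t_2-t_1)I_2=0$, so $I_2=0$ and $u\equiv 0$; and if instead $\int x\,u(x,t_2)\,dx=0$, the same argument applies with $t_1$ and $t_2$ interchanged.

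The step I expect to be the main obstacle is the reduction carried out in the previous paragraph — justifying that, apart from the explicit $|\xi|^{3+a}$ term, the right-hand side of Duhamel's formula stays in $H^{7/2+a}$ on all of $[-T,T]$. This is of exactly the same nature as the weighted estimates used for Theorems~\ref{theorem7} and~\ref{theorem8}: one works in physical space with the splitting $\langle x\rangle^{7/2+a}=\langle x\rangle^{3}\langle x\rangle^{(1/2+a)}$, uses the persistence of $u$ in $Z_{s,(7/2+a)^-}$ (Theorem~\ref{lwp-zsr}) together with the conservation laws to bound $u$ and $u^{2}$ uniformly in $t$, and controls the action of $D^{1+a}\partial_x$ on these weighted functions via the fractional Leibniz rule and the pointwise formula for $D^{(1/2+a)^-}$ — which is precisely where the assumption $s\ge(1+a)(7/2+a)+\tfrac{1-a}{2}$ is used. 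As in Theorem~\ref{theorem8}, I would carry out the details for $a\in[1/2,1)$, the case $a\in(0,1/2)$ following by the same scheme. With this reduction in hand, the algebra above closes the proof.
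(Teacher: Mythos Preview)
Your proposal is correct and follows essentially the same route as the paper. The paper's proof of Theorem~\ref{theorem9} is extremely short: it simply invokes the reduction already carried out in the proof of Theorem~\ref{theorem8} (where it is shown that $D_\xi^{\alpha}\partial_\xi^4\widehat{u}(\cdot,t_2)\in L^2$ forces $\int_0^{t_2}\!\int xu(x,t')\,dx\,dt'=0$), then substitutes the moment identity $\int xu(x,t)\,dx=\int xu_0\,dx+\tfrac{t}{2}\|u_0\|_2^2$ with $\int xu_0\,dx=0$ to obtain $\tfrac{t_2^2}{4}\|u_0\|_2^2=0$. Your argument is the same computation, packaged slightly differently: you work at the level of $\widehat{u}$ itself and track the coefficient of the $|\xi|^{3+a}$ singularity, whereas the paper differentiates four times in $\xi$ and tracks the $|\xi|^{a-1}$ term in $\partial_\xi^4\widehat{u}$; these are equivalent since $\partial_\xi^4|\xi|^{3+a}=c\,|\xi|^{a-1}$, and your coefficient $-i(t_2-t_1)d_1-\tfrac{(t_2-t_1)^2}{4}I_2$ is exactly the integration-by-parts identity \eqref{t8ibyp} combined with \eqref{moment2}. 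The ``main obstacle'' you flag is indeed just the content of Claim~2 and the $E_j$ estimates in Section~5, which the paper reuses verbatim.
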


\begin{remarksn}
Theorem \ref{theorem9} tells us that the conditions of Theorem \ref{theorem8} can be reduced to
two times provided the first momentum of the solution $u$ vanishes at one of them.
\end{remarksn}

\begin{theorem}\label{theorem10}
Let $u\in C([-T,T] : Z_{s, (7/2+a)^-})$ with
$$
T>0\;\;\;\;\;\;\text{and}\;\;\;\;\;s\geq (1+a)(7/2+[1+2a]/2)+(1-a)/2
$$
 be a non-trivial solution of
the IVP \eqref{DGBO} such that
\begin{equation}
u_0\in \dot Z_{s,\frac72 +\tilde a},\,\, \tilde a=[1+2a]/2, \;\;\;\hskip15pt \text{and}\;\;\;\;\;\hskip15pt \int_{-\infty}^{\infty} {x\,u_0(x)\,dx}\neq 0.
 \end{equation}
 Then there exists $t^*\neq0$ with
 \begin{equation}
 t^*=-\frac{4}{\|u_0\|_2^2}\int_{-\infty}^{\infty} {x\,u_0(x)\,dx},
 \end{equation}
 such that $u(t^*)\in\dot Z_{s,\frac72+\tilde a}.$
 \end{theorem}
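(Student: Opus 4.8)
The strategy is to verify, separately, the three properties that define membership of $u(t^*)$ in $\dot Z_{s,\frac72+\tilde a}$: that $u(t^*)\in H^s(\R)$, that $\widehat u(0,t^*)=0$, and that $|x|^{\frac72+\tilde a}u(\cdot,t^*)\in L^2(\R)$. The first is immediate from the $H^s$ well-posedness theory (local for $s\ge 0$, global by \eqref{laws}) together with the standing hypothesis $u\in C([-T,T]:Z_{s,(7/2+a)^-})$. The second follows from conservation of $I_1$ and $\widehat u_0(0)=0$, since $\widehat u(0,t)=\int u(x,t)\,dx=I_1(u)=\widehat u_0(0)=0$ for all $t$ (so $u(t)$ is mean-free for every $t$). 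Hence the whole content of the theorem is the weighted bound at the single instant $t^*$.

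For that bound I would run the scheme used in the proofs of Theorems \ref{theorem8}--\ref{theorem9}, applied to the Duhamel formula $u(t)=U_a(t)u_0-\int_0^t U_a(t-\tau)(u\partial_x u)(\tau)\,d\tau$ with $U_a(t)=e^{-tD^{1+a}\partial_x}$. Writing $\frac72+\tilde a=\ell+\theta$ with $\ell\in\Z^+$ and $\theta\in[0,1)$, the choice $\tilde a=[1+2a]/2$ gives $\ell=4,\ \theta=0$ for $a\in(0,1/2)$ and $\ell=4,\ \theta=\frac12$ for $a\in[1/2,1)$ (this is the dichotomy flagged after Theorem \ref{theorem8}). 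One converts the integer part of the weight into $D^{1+a}$-derivatives by means of the commutation identity $\Gamma^{\ell}U_a(t)=U_a(t)\,x^{\ell}$, $\Gamma=x-(a+2)tD^{1+a}$, as in \eqref{commu}, and the fractional part by a Leibniz rule for $D^{\theta}_\xi$ acting on $e^{-it\sigma_a(\xi)}\widehat u_0(\xi)$ and on the Duhamel integrand; this reduces $\||x|^{\frac72+\tilde a}u(t)\|_2$ to a finite sum of terms. All of these are square integrable: the purely weighted data term is finite because $u_0\in\dot Z_{s,\frac72+\tilde a}$; the derivative losses from $\Gamma^{\ell}$ and $D^\theta_\xi$ are absorbed by $s\ge(1+a)(\frac72+\tilde a)+\frac{1-a}{2}$; and the nonlinear contributions are controlled by the algebra and interpolation properties of the spaces $Z_{s,r}$ together with the persistence at the level $(7/2+a)^-$ already in hand. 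The one exceptional piece is a low-frequency remainder, which must be computed by combining the contributions of $U_a(t)u_0$ and of the Duhamel integral \emph{before} estimating, since each of them is separately non-integrable near $\xi=0$.

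The analysis of that remainder is the heart of the matter. Near $\xi=0$ the most singular term in $\widehat{|x|^{\frac72+\tilde a}u(t)}(\xi)$ comes from all of the weight's $\xi$-derivatives falling on the oscillatory factor, whose phase $\sigma_a(\xi)=|\xi|^{1+a}\xi$ has $\sigma_a'''(\xi)\sim|\xi|^{a-1}$ near the origin; this term equals $D^\theta_\xi(|\xi|^{a-1})$ times a coefficient $c(t)$ which is the sum of a piece $\propto t\,\partial_\xi\widehat u_0(0)$ coming from $U_a(t)u_0$ (here one uses $\widehat u_0(0)=0$, so $\widehat u_0(\xi)=\partial_\xi\widehat u_0(0)\xi+O(\xi^2)$ near $\xi=0$) and a piece $\propto t^2\,I_2(u)$ coming from the Duhamel term (here one uses $\widehat{(u\partial_x u)}(\xi,\tau)=\frac{i\xi}{2}\|u(\tau)\|_2^2+O(\xi^2)=\frac{i\xi}{2}I_2(u)+O(\xi^2)$, the $\tau$-integration supplying the extra power of $t$). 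Carrying out the constants, $c(t)$ is a nonzero multiple of $t\big(\int x\,u_0(x)\,dx+\frac{t}{4}I_2(u)\big)$, using $\partial_\xi\widehat u_0(0)=-i\int x\,u_0(x)\,dx$. Since $D^\theta_\xi(|\xi|^{a-1})$ fails to be square integrable at the origin ($|\xi|^{a-1}\notin L^2$ there when $a\in(0,1/2)$, and $D^{1/2}_\xi(|\xi|^{a-1})\sim|\xi|^{a-3/2}\notin L^2$ there when $a\in[1/2,1)$), this remainder lies in $L^2$ exactly when $c(t)=0$. Because $I_2(u)=\|u_0\|_2^2\ne0$ (nontrivial solution) and $\int x\,u_0(x)\,dx\ne0$ by hypothesis, $c(t)=0$ has the two roots $t=0$ and $t=t^*=-\frac{4}{\|u_0\|_2^2}\int x\,u_0(x)\,dx\ne0$; the root $t=0$ just reflects the hypothesis $u_0\in\dot Z_{s,\frac72+\tilde a}$. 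Taking $t=t^*$ removes the obstruction, so $|x|^{\frac72+\tilde a}u(\cdot,t^*)\in L^2(\R)$, which proves (iii) and the theorem.

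The main obstacle is exactly this low-frequency bookkeeping: controlling $D^\theta_\xi$ and its interaction with the integer-order weights coming from $\Gamma^{\ell}$, applied to $e^{-it\sigma_a(\xi)}$ whose phase is only $C^2$ at the origin; isolating the unique divergent remainder; identifying its coefficient with the explicit quadratic $c(t)$; and --- the delicate point --- recognizing that the contributions of $U_a(t)u_0$ and of the Duhamel integral to that remainder, each individually outside $L^2$, conspire to cancel precisely at $t=t^*$. The threshold on $s$ is dictated by the derivatives the argument expends (roughly $(1+a)$ times the weight $\frac72+\tilde a$, plus the $\frac{1-a}{2}$ needed to close the smoothing estimates), and the choice $\tilde a=[1+2a]/2$ places $\frac72+\tilde a$ at a value with fractional part $0$ or $\frac12$, for which the fractional-weight estimates close cleanly.
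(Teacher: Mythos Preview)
Your proposal is correct and follows essentially the same route as the paper: the paper's proof (Section~\ref{proof10}) simply reruns the Theorem~\ref{theorem8} machinery with the fractional exponent pushed up to $\alpha=1/2$ (i.e.\ $\tilde a=1$ for $a\in[1/2,1)$), reducing the question to whether $D_\xi^{1/2}\big(\chi(\xi)|\xi|^{a-1}\big)\int_0^t\!\int xu\,dx\,dt'\in L^2$, and then uses \eqref{moment2} to obtain exactly your quadratic $t\big(\int xu_0\,dx+\tfrac{t}{4}\|u_0\|_2^2\big)$. One small imprecision: the singular $|\xi|^{a-1}$ piece does not come only from ``all $\xi$-derivatives on the oscillatory factor'' --- in the paper's bookkeeping both $E_1$ (with $\sigma_a^{(4)}\cdot\widehat u_0$, then $\widehat u_0(0)=0$) and $E_5$ (with $\sigma_a'''\cdot\partial_\xi\widehat u_0$) contribute, each producing a $t|\xi|^{a-1}\partial_\xi\widehat u_0(0)$ term --- but your identification of the coefficient $c(t)$ and of $t^*$ is correct regardless.
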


\begin{remarks}\hskip20pt
 \begin{enumerate}
\item[a)] Notice that $\,\tilde a>a$, so Theorem \ref{theorem10} shows that the condition of Theorem \ref{theorem8}
 at two times is in general not sufficient to guarantee that $u\equiv 0$.  So, in this regard
 Theorem \ref{theorem9} is optimal.

\item[b)]  The results in Theorem \ref{theorem8} and Theorem \ref{theorem10} present  an striking difference with other unique continuation properties deduced for other dispersive models.
Using the information at two different times, uniqueness results have been established for the generalized KdV equation in \cite{EKPV1}, for the semi-linear Schr\"odinger equation in \cite{EKPV2}, and  for the Camassa-Holm model  in \cite{HMPZ}.  Theorem \ref{theorem10}
affirms that the uniqueness condition with the weight $|x|^{7/2+a}$ does not hold at two different
times but Theorem \ref{theorem8} guarantees that it does  at three times. Similar result for the Benjamin-Ono equation ($a=0$ in \eqref{DGBO})
was obtained in \cite{GFFLGP}.
\end{enumerate}
\end{remarks}

\vskip.in
One can consider the IVP \eqref{DGBO} with $a>1$. In this case our results still hold, with the
appropriate modification in the well-posedness in $H^s(\R)$, if $a$ is not an odd integer. In
the case where $a$ is an odd integer, one has solutions with exponential decay as in the
case of the KdV equation ($a=1$ in \eqref{DGBO}).

Finally, we consider the generalization of the IVP \eqref{DGBO} to higher nonlinearity
\begin{equation}\label{k-dgbo}
\begin{cases}
\partial_t u+D^{1+a}\partial_x u+u^k\partial_xu=0,\hskip10ptt,x\in\R,\;k\in\Z^{+},\\
u(x,0)=u_0(x).
\end{cases}
\end{equation}

In this case our positive results, Theorems \ref{lwp-zsr} -- \ref{theorem7}, still hold (with the
appropriate modification in the well-posedness in $H^s(\R)$). Our unique continuation results
(Theorems \ref{theorem8}--\ref{theorem9}) can be extended to the case where $k$ in
\eqref{k-dgbo} is odd. In this case one has that the time evolution of the first momentum of
the solution is given by the formula
\begin{equation*}
\int_{-\infty}^{\infty} x\,u(x,t)\,dx=\int_{-\infty}^{\infty} x\,u_0(x)\,dx+\frac{1}{k+1}
\int_0^t \int_{-\infty}^{\infty} u^{k+1}(x,t')\,dx',dt'.
\end{equation*}
Thus, it is an increasing function. Hence, defining $t^{*}\neq 0$ as the solution of the
equation
\begin{equation}\label{g-dgbo-2}
\int_0^{t^{*}} \int_{-\infty}^{\infty} x\,u(x,t)\,dxdt=0,
\end{equation}
one sees that there is at most one solution of \eqref{g-dgbo-2} but its existence it
is not guaranteed. So the statements in Theorems \ref{theorem8}--\ref{theorem9}
would have to be modified accordingly to this fact.

 The rest of this paper is organized as follows: section 2 contains some preliminary estimates
 to be used in  the coming sections. Section 3 contains the proof of Theorem \ref{lwp-zsr}.  
Theorem \ref{theorem7}, Theorem \ref{theorem8}, Theorem \ref{theorem9}, and Theorem \ref{theorem10} will be proven in sections 4, 5, 6, and 7
respectively.

\section{Preliminary Estimates}

We begin this section by introducing the notation needed in this work.
We use $\|\cdot\|_{L^p}$ to denote the $L^p(\R)$ norm. If necessary, we use subscript to inform which
variable we are concerned with. The mixed norm $L^q_tL^r_x$ of
$f=f(x,t)$ is defined as
\begin{equation*}
\|f\|_{L^q_tL^r_x}= \left(\int \|f(\cdot,t)\|_{L^r_x}^q dt
\right)^{1/q},
\end{equation*}
with the usual modifications when $q =\infty$ or $r=\infty$. The
$L^r_xL^q_t$ norm is similarly defined.

We define the spatial Fourier transform of $f(x)$ by
\begin{equation*}
\hat{f}(\xi)=\int_{\R} e^{-ix\xi}f(x)\,dx.
\end{equation*}

We shall also define $J^s$ to be the Fourier multiplier with symbol $\ji \xi \jd^s = (1+|\xi|^2)^\frac{s}{2}$.
Thus, the norm in the Sobolev space $H^s(\R)$ is given by
\begin{equation*}
\|f\|_{s,2}\equiv \|J^s f\|_{L^2_x}=\|\ji \xi\jd^s\widehat{f}\,\|_{L^2_{\xi}}.
\end{equation*}

A function $\chi\in C^{\infty}_0$, supp $\chi\subseteq [-2,2]$ and $\chi\equiv 1$
in $(-1,1)$ will appear several times in our arguments.

 For $a\in (0,1)$ fixed we introduce $F_{j}$'s as
being
\begin{equation}\label{notation2}
F_j(t,\xi,\widehat{u}_0)=\partial_{\xi}^j (e^{-it|\xi|^{1+a}\xi}\widehat{u}_0(\xi)),
\end{equation}
for $j=0,1,2,3,4.$ Thus
\begin{equation*}
\begin{split}
F_1(t,\xi,\widehat{u}_0)&=-(2+a)it|\xi|^{1+a}e^{-it|\xi|^{1+a}\xi}\widehat{u}_0(\xi)
+e^{-it|\xi|^{1+a}\xi}\partial_{\xi}\widehat{u}_0(\xi),
\end{split}
\end{equation*}
\begin{equation*}
\begin{split}
 F_2(t,\xi,\widehat{u}_0)=&\,
e^{-it|\xi|^{1+a}\xi}(-it(2+a)(1+a)|\xi|^a\text{sgn}(\xi)\widehat{u}_0(\xi)\\
&-(2+a)^2t^2|\xi|^{2(a+1)}\widehat{u}_0(\xi)\\
&-2it(2+a)|\xi|^{1+a}\partial_{\xi}\widehat{u}_0(\xi)+\partial_{\xi}^2\widehat{u}_0(\xi))\\
=&\,(B_1+B_2+B_3+B_4)(t,\xi,\widehat{u}_0),
\end{split}
\end{equation*}
\begin{equation*}
\begin{split}
F_3(t,\xi,\widehat{u}_0)=&\, e^{-it|\xi|^{1+a}\xi}\big(-ita(1+a)(2+a)|\xi|^{a-1}\widehat{u}_0(\xi)\\
&-3t^{2}(2+a)^{2}(1+a)|\xi|^{2a+1}\,\text{sgn}(\xi)\widehat{u}_0(\xi)\\
&+it^3(2+a)^{3}|\xi|^{3(1+a)}\widehat{u}_0\\
&-3it(2+a)(1+a)\,|\xi|^{a}\text{sgn}(\xi)\partial_{\xi}\widehat{u}_0(\xi)\\
&-3t^{2}(2+a)^{2}|\xi|^{2(1+a)}\partial_{\xi}\widehat{u}_0(\xi)\\
&-3it(2+a)|\xi|^{1+a}\partial_{\xi}^2\widehat{u}_0(\xi)+\partial_{\xi}^3\widehat{u}_0(\xi)\big)\\
=&\,(D_1+D_2+D_3+D_4+D_5+D_7)(t,\xi,\widehat{u}_0),
\end{split}
\end{equation*}
\begin{equation*}
\begin{split}
F_4(t,\xi,\widehat{u}_0)=&\, e^{-it|\xi|^{1+a}\xi}\big(-it(2+a)(1+a)a(a-1)|\xi|^{a-2}\,\text{sgn}(\xi)\widehat{u}_0(\xi)\\
&-t^2(2+a)^2(1+a)(7a+3)|\xi|^{2a}\widehat{u}_0(\xi)\\
&+6it^3(2+a)^{3}(1+a)|\xi|^{3a+2}\text{sgn}(\xi)\widehat{u}_0(\xi)\\
&+t^4(2+a)^{4}|\xi|^{4(a+1)}(\xi)\widehat{u}_0(\xi)\\
&-4ita(1+a)(2+a)|\xi|^{a-1}\partial_{\xi}\widehat{u}_0(\xi)\\
&-12t^{2}(2+a)^{2}(1+a)|\xi|^{2a+1}\,\text{sgn}(\xi)\partial_{\xi}\widehat{u}_0(\xi)\\
&+4it^3(2+a)^{3}|\xi|^{3(1+a)}\partial_{\xi}\widehat{u}_0
-6t^{2}(2+a)^{2}|\xi|^{2(1+a)}\partial_{\xi}^2\widehat{u}_0(\xi)\\
&-6it(2+a)(1+a)\,|\xi|^{a}\text{sgn}(\xi)\partial_{\xi}^2\widehat{u}_0(\xi)\\
&-4it(2+a)|\xi|^{1+a}\partial_{\xi}^3\widehat{u}_0(\xi)+\partial_{\xi}^4\widehat{u}_0(\xi)\big)\\
=&\,(E_1+\cdots+E_{11})(t,\xi,\widehat{u}_0).
\end{split}
\end{equation*}
\vskip3mm

The next two results will be essential in the analysis below.

The first one is an extension of the Calder\'on commutator theorem \cite{Ca} found in
 \cite{DaMcPo}.

\begin{lemma}\label{dmp1} Let $\hil$ denote
the Hilbert transform. Then for any $p\in (1,\infty)$ and any $l, m
\in \Z^{+}\cup\{0\}$ there exists $c = c(p; l; m) > 0$ such that
\begin{equation}\label{c-dmp1}
\|\partial_x^l [\hil;\psi]\partial_x^m f\|_{L^p}\le
c\,\|\partial_x^{m+l}\psi\|_{L^{\infty}}\|f\|_{L^p}.
\end{equation}
\end{lemma}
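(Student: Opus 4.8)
The final statement to be proved is Lemma \ref{dmp1}, an extension of the Calder\'on commutator theorem asserting that
$$
\|\partial_x^l [\hil;\psi]\partial_x^m f\|_{L^p}\le c\,\|\partial_x^{m+l}\psi\|_{L^{\infty}}\|f\|_{L^p}.
$$
The key structural feature is that only the top-order derivative $\partial_x^{m+l}\psi$ of the multiplier appears on the right-hand side; no lower-order derivatives of $\psi$ enter. My plan is to reduce everything to the classical Calder\'on first commutator estimate, which treats the base case $l=0$, $m=1$, namely $\|[\hil;\psi]\partial_x f\|_{L^p}\le c\|\psi'\|_{L^\infty}\|f\|_{L^p}$, and then to induct on $l$ and $m$ using the algebra of commutators together with the fact that $\hil$ commutes with $\partial_x$.

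\textbf{Main steps.} First I would recall the base identity. Since $\hil$ is a Fourier multiplier it commutes with $\partial_x$, so $[\hil;\psi]\partial_x = \hil\psi\partial_x-\psi\hil\partial_x = \hil\psi\partial_x-\psi\partial_x\hil$. Writing $\psi\partial_x f = \partial_x(\psi f)-\psi' f$ and using $\hil\partial_x=\partial_x\hil$, one manipulates this into the Calder\'on commutator whose boundedness on $L^p$, $1<p<\infty$, with constant controlled by $\|\psi'\|_{L^\infty}$ is exactly \cite{Ca}. Next, for general $m$ I would iterate the Leibniz-type commutator expansion: using $[\hil;\psi]\partial_x^m = \partial_x([\hil;\psi]\partial_x^{m-1}) - [\hil;\psi']\partial_x^{m-1}$ (which follows from commuting $\partial_x$ through $\hil$ and the product rule), one peels off one derivative at a time. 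Third, to produce the outer $\partial_x^l$, I would commute $\partial_x^l$ inside in the same way, repeatedly trading $\partial_x^l[\hil;\psi]$ for a sum of terms of the form $[\hil;\partial_x^j\psi]\partial_x^{l-j}$ via the identity $\partial_x[\hil;\psi]=[\hil;\psi]\partial_x+[\hil;\psi']$. Carefully bookkeeping these expansions, every resulting term is a single commutator $[\hil;\partial_x^i\psi]$ composed with a pure power of $\partial_x$, where the total derivative count on $\psi$ plus the surviving derivative count matches $m+l$. Each such term is estimated by the base Calder\'on commutator applied after factoring out one derivative, giving a bound by $\|\partial_x^{i+1}\psi\|_{L^\infty}$ times an $L^p$-norm of a derivative of $f$; the point is that after the full reduction the only genuinely top-order contribution involves $\partial_x^{m+l}\psi$, while the intermediate derivative orders of $\psi$ can be absorbed because the corresponding factors of $\partial_x$ acting on $f$ cancel against the commutator structure.

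\textbf{Main obstacle.} The delicate point is the bookkeeping that guarantees the right-hand side depends only on $\|\partial_x^{m+l}\psi\|_{L^\infty}$ and not on lower-order norms of $\psi$. A naive Leibniz expansion produces terms like $[\hil;\partial_x^j\psi]\partial_x^{m+l-j}f$ for various $j$, and controlling these by $\|f\|_{L^p}$ alone (rather than by $\|\partial_x^{m+l-j}f\|_{L^p}$) requires that each commutator absorb all but one of the surviving derivatives on $f$. This is precisely where the smoothing inherent in $[\hil;\psi]$ must be exploited: one shows inductively that $\partial_x^l[\hil;\psi]\partial_x^m$ equals a finite sum $\sum_j c_j\,\partial_x^{\alpha_j}[\hil;\partial_x^{m+l-\alpha_j-\beta_j}\psi]\partial_x^{\beta_j}$ in which each commutator can be rewritten, via the base estimate, so that it contributes exactly one derivative of $f$ to be cancelled and one top derivative of $\psi$. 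I would therefore organize the proof as a double induction on $(l,m)$, verifying the base case from \cite{Ca}, and at each step applying the commutation identity $\partial_x\hil=\hil\partial_x$ together with the product rule to reduce $(l,m)$ to strictly smaller indices, with the inductive hypothesis furnishing the claimed bound; the scaling $\|\partial_x^{m+l}\psi\|_{L^\infty}\|f\|_{L^p}$ is preserved at every step because differentiating $\psi$ and integrating (i.e.\ removing a derivative from $f$) trade off exactly.
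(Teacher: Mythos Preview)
The paper does not supply its own proof of this lemma; it simply refers to Lemma 3.1 in \cite{DaMcPo}. So there is no argument in the paper to compare against, and your task was really to reconstruct the proof from \cite{DaMcPo}.

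Your inductive scheme, however, has a genuine gap. The identity $\partial_x[\hil;\psi]=[\hil;\psi]\partial_x+[\hil;\psi']$ lets you shuffle derivatives between the left and the right of the commutator, but it never lowers the total count $l+m$. Concretely, expanding with your identity gives
\[
\partial_x^l[\hil;\psi]\partial_x^m=\sum_{k=0}^{l}\binom{l}{k}[\hil;\psi^{(k)}]\,\partial_x^{\,l+m-k},
\]
and the $k=0$ term is $[\hil;\psi]\partial_x^{\,l+m}$, which is the $(0,l+m)$ case --- the same total order you started with. Likewise, trying to push derivatives from the right back to the left produces $\partial_x^{\,l+m}[\hil;\psi]$ plus lower-order pieces. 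The two ``extreme'' cases $(0,N)$ and $(N,0)$ are related to each other only modulo terms of order $N-1$, so commutator algebra together with the first Calder\'on commutator (the $N=1$ case) cannot close the induction at level $N\ge 2$. Your claimed decomposition into terms $\partial_x^{\alpha_j}[\hil;\partial_x^{\gamma_j}\psi]\partial_x^{\beta_j}$ with $\gamma_j\ge 1$ for every $j$ is simply false; a $\gamma_j=0$ term always survives.

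What is actually needed is a direct analysis of the endpoint operator. After a careful integration by parts through the principal value, $[\hil;\psi]\partial_x^{m}$ has kernel proportional to
\[
\frac{R_{m+1}(x,y)}{(x-y)^{m+1}},\qquad R_{m+1}(x,y)=\psi(x)-\sum_{k=0}^{m}\frac{\psi^{(k)}(y)}{k!}(x-y)^{k},
\]
and one checks that this is a Calder\'on--Zygmund kernel with size and regularity constants controlled by $\|\psi^{(m)}\|_{L^\infty}$ (not $\|\psi^{(m+1)}\|_{L^\infty}$, since $|R_m|\le c\|\psi^{(m)}\|_{L^\infty}|x-y|^m$ already bounds the principal part). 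Its $L^p$ boundedness is then a genuine extension of the first commutator estimate, and this is the content of the result in \cite{DaMcPo}. Once the $(0,N)$ case is in hand, your reduction of the general $(l,m)$ case to it is correct.
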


See Lemma 3.1 in \cite{DaMcPo}.

\begin{proposition}\label{dmp2}
Let $\alpha \in [0, 1)$, $\beta\in(0, 1)$ with $\alpha+\beta \in [0,
1]$. Then for any $p, q \in  (1,\infty)$ and for any $\delta > 1/q$
there exists $c = c(\alpha;\beta; p; q;\delta) > 0$ such that
\begin{equation}\label{c-dmp2}
\|D^{\alpha} [D^{\beta}; \psi]D^{1-(\alpha+\beta)} f\|_{L^p}\le
c\,\|J^{\delta}\partial_x \psi\|_{L^q} \|f\|_{L^p},
\end{equation}
where $J := (1 -\partial_x^2)^{1/2}$.
\end{proposition}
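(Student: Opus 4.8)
The plan is to establish \eqref{c-dmp2} by a Coifman--Meyer/paraproduct-type decomposition in frequency, separating the contributions according to whether $\psi$ is at frequency comparable to, or higher than, or much lower than $f$. First I would write the bilinear operator
$$
T(\psi,f) = D^{\alpha}[D^{\beta};\psi]D^{1-(\alpha+\beta)}f
$$
with Fourier multiplier symbol
$$
m(\xi,\eta) = |\xi|^{\alpha}\,\bigl(|\xi|^{\beta}-|\xi-\eta|^{\beta}\bigr)\,|\xi-\eta|^{1-(\alpha+\beta)},
$$
where $\eta$ is the frequency of $\psi$ and $\xi-\eta$ that of $f$, and decompose this symbol using a Littlewood--Paley partition $1=\sum_k \chi(2^{-k}\cdot)-\chi(2^{-k+1}\cdot)$ in each variable. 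The crucial homogeneity bookkeeping is that $m$ is homogeneous of degree $1$ jointly, and one derivative always ``lands on $\psi$'' in the commutator difference; this is what makes the estimate close with only $\partial_x\psi$ (plus $\delta>1/q$ derivatives of regularity, needed purely to sum a Littlewood--Paley series in the $L^q$ norm of $\psi$) on the right-hand side.

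The key steps, in order, are: (1) In the region where $\psi$ and $f$ have comparable frequencies $\sim 2^k$ and the output frequency is $\lesssim 2^k$ (the ``high-high'' interaction), use $\big||\xi|^\beta-|\xi-\eta|^\beta\big|\lesssim 2^{k\beta}$ together with $|\xi|^\alpha \lesssim 2^{k\alpha}$ and $|\xi-\eta|^{1-\alpha-\beta}\lesssim 2^{k(1-\alpha-\beta)}$, so the localized multiplier is bounded by $2^k$ and is a smooth Coifman--Meyer symbol on the relevant annuli; by Hölder with exponents $(q,p)$ one gets $\|T_k\|_{L^p}\lesssim \|\partial_x\psi_k\|_{L^q}\|f_k\|_{L^p}$ after converting $2^k\|\psi_k\|_{L^q}$ into $\|\partial_x\psi_k\|_{L^q}$, and then $\delta$ extra derivatives let one sum $\sum_k 2^{-k\delta}\cdot 2^{k\delta}$ via Cauchy--Schwarz/the embedding $\ell^2_k\hookrightarrow\ell^1_k$ against $\|J^\delta\partial_x\psi\|_{L^q}$. (2) In the ``high-low'' region where $f$ has the high frequency $\sim 2^k$ and $\psi$ is at frequency $\lesssim 2^k$, expand $|\xi|^\beta-|\xi-\eta|^\beta$ by the fundamental theorem of calculus in $\eta$, extracting one factor of $\eta$ (hence one derivative of $\psi$) and a remaining symbol $\int_0^1 \beta|\xi-s\eta|^{\beta-1}ds$ which, combined with $|\xi|^\alpha|\xi-\eta|^{1-\alpha-\beta}$, is a Coifman--Meyer symbol of order $0$ on the annulus; this again yields the bound with $\partial_x\psi$ in $L^q$. (3) The ``low-high'' region where $\psi$ has the high frequency is the main term: here $|\xi-\eta|\ll|\xi|\sim|\eta|$, $|\xi|^\beta-|\xi-\eta|^\beta$ has size $\sim 2^{k\beta}$ with no cancellation, but the factor $|\xi-\eta|^{1-\alpha-\beta}$ is small (since $1-\alpha-\beta\ge 0$) and the whole symbol is $\lesssim 2^{k}$ once we again trade $2^{k\beta}\cdot 2^{k\alpha}$ and a factor $2^k$ for $|\partial_x\psi|$; one must be slightly careful when $\alpha+\beta=1$, where $|\xi-\eta|^0=1$ and the argument is cleanest. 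In all three regions assemble via the Littlewood--Paley square function and Fefferman--Stein/Hörmander--Mikhlin to control the $L^p$ norm of the square function of $T$ by the product of norms.

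The main obstacle I expect is the low regularity of the symbols at the origin: the multipliers $|\xi|^\alpha$, $|\xi|^{\beta-1}$, and $|\xi-\eta|^{1-\alpha-\beta}$ are not smooth at $\xi=0$ (resp.\ $\xi-\eta=0$), so one cannot directly invoke the classical Coifman--Meyer theorem, and the endpoint cases $\alpha=0$ and $\alpha+\beta=1$ (where some exponent degenerates to $0$) as well as $\beta\to 1$ need separate care. The remedy is to exploit that every dyadic piece is supported on an \emph{annulus} away from the origin, where these functions \emph{are} smooth with the expected symbol bounds $|\partial^\gamma(\text{symbol})|\lesssim 2^{k(\text{order}-|\gamma|)}$; this is exactly the setting where Lemma \ref{dmp1} (the Calderón commutator estimate, which already handles the $\beta=1$, $\alpha=0$ prototype) and its fractional analogues apply piece by piece, and the hypothesis $\delta>1/q$ is precisely what is needed to sum the dyadic series of $\psi$ in $L^q$ (it is the Sobolev/$bmo$-type threshold that would fail at $\delta=1/q$). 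I would therefore organize the proof so that the heavy lifting is a single ``annular Coifman--Meyer + summation'' lemma, applied three times.
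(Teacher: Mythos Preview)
The paper does not actually prove this proposition: immediately after the statement it simply writes ``See Proposition 3.2 in \cite{DaMcPo}'' and moves on. So there is no in-paper argument to compare your proposal against; the result is imported wholesale from Dawson--McGahagan--Ponce.

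Your paraproduct/Coifman--Meyer outline is a reasonable route to such a commutator estimate and is in the spirit of how results of this type are established. The frequency trichotomy (high--high, high--low, low--high) and the mechanism of extracting one factor of $\eta$ from $|\xi|^\beta-|\xi-\eta|^\beta$ to land a derivative on $\psi$ are the right ideas, and your observation that $\delta>1/q$ is exactly the summability threshold for the dyadic pieces of $\psi$ in $L^q$ is correct. A couple of places where your sketch would need tightening before it is a proof: in the low--high region you should make explicit the gain $(2^j/2^k)^{1-(\alpha+\beta)}\le 1$ (with $2^j$ the frequency of $f$ and $2^k$ that of $\psi$) rather than just saying the symbol is $\lesssim 2^k$, and you need to say precisely which bilinear multiplier theorem you invoke on each annular piece, since the symbols here are genuinely nonsmooth at the origin and the Coifman--Meyer theorem in its classical form does not apply directly. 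If you intend to supply a self-contained proof rather than cite \cite{DaMcPo}, it would be worth consulting that reference to see whether their argument is the paraproduct one you outline or something more direct.
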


See Proposition 3.2 in \cite{DaMcPo}.

\vspace{1cm}

Using the notation
\begin{equation}\label{group}
W_a(t)f=\big(e^{-it|\xi|^{1+a}\xi}\, \widehat{f}\,\big)^{\vee}
\end{equation}
we recall the following linear estimates:

\begin{proposition}\label{prop3a}(Smoothing Effects and Maximal Function)
\begin{enumerate}
\item Homogeneous.
\begin{equation}\label{homoge}
\| D^{(1+a)/2}W_a(t)f\|_{L_x^{\infty}L_T^2}\le c_a\,\|f\|_2.
\end{equation}
\item Nonhomogeneous and Duality.
\begin{equation}\label{nhplusdual}
\begin{split}
&\| D^{s+a/2+1/2}\!\! \int_0^t\!\! W_a(t-t') F(t')d t'
\|_{L_x^{\infty}L_T^2}\!+\!
\| D^{s}\!\! \int_0^t \!\!W_a(t-t') F(t')d t' \|_{L_T^{\infty}L_x^2}\\
&\le T^{a/2} \| D^{s-1/2+a/2}F\|_{L_x^{2/(2-a)} L_T^2}.
\end{split}
\end{equation}
\item Maximal function estimate
\begin{equation}\label{maximal}
\|W_a(t)f\|_{L_x^2L_T^{\infty}}\le c\,(1+T)^{\rho}\,\|f\|_{s,2}
\end{equation}
where $\rho>3/4$ and $s>(2+a)/4$.
\end{enumerate}
\end{proposition}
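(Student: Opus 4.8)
These are the classical linear estimates for the unitary group $W_a$ associated with the symbol $\sigma(\xi)=|\xi|^{1+a}\xi$, and the plan is to obtain \eqref{homoge} directly by a change of variables, \eqref{nhplusdual} from \eqref{homoge} by duality and a $TT^{*}$ argument, and \eqref{maximal} by a Littlewood--Paley decomposition together with an oscillatory integral (van der Corput) bound. For \eqref{homoge}, I would fix $x\in\R$ and write $g_x(t):=D^{(1+a)/2}W_a(t)f(x)=c\!\int_{\R}e^{-it\sigma(\xi)}e^{ix\xi}|\xi|^{(1+a)/2}\widehat f(\xi)\,d\xi$. Since $\sigma'(\xi)=(2+a)|\xi|^{1+a}>0$ for $\xi\ne0$, the map $\xi\mapsto\tau=\sigma(\xi)$ is a $C^{1}$ increasing bijection of $\R$ with inverse $\psi(\tau)=\sgn(\tau)|\tau|^{1/(2+a)}$; substituting $\tau=\sigma(\xi)$, so that $d\tau=(2+a)|\xi|^{1+a}d\xi$, exhibits $g_x$ as a fixed multiple of the Fourier transform in $t$ of
\[
\tau\longmapsto e^{ix\psi(\tau)}\,\widehat f(\psi(\tau))\,\frac{|\psi(\tau)|^{(1+a)/2}}{(2+a)\,|\psi(\tau)|^{1+a}}\,,
\]
and, since $|\xi|^{(1+a)/2}$ is, up to a constant, $(\sigma'(\xi))^{1/2}$, Plancherel in $t$ followed by undoing the substitution yields
\[
\|g_x\|_{L^2_t(\R)}^2=c\!\int_{\R}|\widehat f(\psi(\tau))|^2|\psi(\tau)|^{-(1+a)}\,d\tau=c\!\int_{\R}|\widehat f(\xi)|^2\,d\xi=c\,\|f\|_2^2\,,
\]
with $c$ independent of $x$; taking $\sup_x$ and restricting the time integration to $[0,T]$ gives \eqref{homoge}.

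For \eqref{nhplusdual}, since $D^{b}$ commutes with $W_a$, the substitution $F=D^{-(s-1/2+a/2)}G$ reduces the claim to
\[
\Big\|D\!\int_0^t\!W_a(t-t')G(t')\,dt'\Big\|_{L^\infty_xL^2_T}+\Big\|D^{(1-a)/2}\!\int_0^t\!W_a(t-t')G(t')\,dt'\Big\|_{L^\infty_TL^2_x}\le T^{a/2}\,\|G\|_{L^{2/(2-a)}_xL^2_T}\,.
\]
Dualizing \eqref{homoge} in the $x$-variable gives $\big\|\int_\R W_a(-t')D^{(1+a)/2}h(t')\,dt'\big\|_2\lesssim\|h\|_{L^1_xL^2_t}$; composing this with \eqref{homoge} (a $TT^{*}$ step) and passing from the non-retarded convolution to the Duhamel integral $\int_0^t$ by a standard argument (the Christ--Kiselev lemma, or directly via the finiteness of $[0,T]$) yields the $L^1_x$-based endpoints, with derivative gains $D^{1+a}$ and $D^{(1+a)/2}$ respectively. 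Minkowski's inequality and the unitarity of $W_a$ provide the corresponding $L^2_x$-based endpoints; interpolation in the spatial variable then produces the exponent $L^{2/(2-a)}_x$, while Hölder in $t$ supplies the factor $T^{a/2}$ and fixes the remaining derivative counts.

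For \eqref{maximal}, I would decompose $f=P_{\le1}f+\sum_{N}P_Nf$ over dyadic $N\ge1$. On the low piece, $W_a(t)P_{\le1}f=K_t*f$ with $\widehat{K_t}(\xi)=e^{-it\sigma(\xi)}\chi(\xi)$ supported in $|\xi|\le2$; the $\xi$-derivatives of this symbol grow only polynomially in $t$, so $\sup_{|t|\le T}|K_t(x)|\le C_M(1+T)^{M'}(1+|x|)^{-M}$ for every $M$, whence Young's inequality gives $\big\|\sup_{|t|\le T}|W_a(t)P_{\le1}f|\big\|_{L^2_x}\le C(1+T)^{M'}\|f\|_2$. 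For a high block, linearize the supremum by a measurable $t(x)\in[0,T]$ and run a $TT^{*}$ argument: the kernel is $K_N(x,y)=\int e^{i\{(x-y)\xi-(t(x)-t(y))\sigma(\xi)\}}\rho_N(\xi)\,d\xi$ with $\rho_N$ a smooth bump supported in $|\xi|\sim N$, and van der Corput (using $|\sigma''(\xi)|\sim N^{a}$ on $|\xi|\sim N$) together with non-stationary phase when $|x-y|\gg TN^{1+a}$ gives $|K_N(x,y)|\lesssim\min\!\big(N,\,|t(x)-t(y)|^{-1/2}N^{-a/2}\big)$ off a rapidly decaying tail. A Schur-type estimate in $y$ followed by a dyadic summation in $N$ against $N^{-s}$ converges for $s>(2+a)/4$, the powers of $T$ accumulated along the way producing the factor $(1+T)^{\rho}$ with $\rho>3/4$; together with the low-frequency bound, this gives \eqref{maximal}.

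The main obstacle is \eqref{maximal}: the delicate points are extracting the sharp dependence of $|K_N(x,y)|$ on both $N$ and $T$ from the oscillatory integral, carrying out the Schur summation when $y\mapsto t(y)$ is only measurable, and summing the dyadic pieces so as to reach exactly the threshold $s>(2+a)/4$ and the power $\rho>3/4$ --- this is the Kenig--Ponce--Vega-type maximal function analysis. By comparison \eqref{homoge} and \eqref{nhplusdual} are routine, although the precise mixed-norm exponents in \eqref{nhplusdual} still require some care in the interpolation and in the use of Hölder in time.
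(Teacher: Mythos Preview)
Your proposal is correct and follows the standard route; in fact the paper does not give an independent proof here at all --- it simply refers to \cite{KePoVe} for \eqref{homoge} and \eqref{maximal} and states that \eqref{nhplusdual} ``follows by interpolation.'' Your outline is precisely a sketch of what those references contain: the change-of-variables/Plancherel argument for the Kato smoothing \eqref{homoge}, a $TT^{*}$/duality plus interpolation derivation of \eqref{nhplusdual}, and the Kenig--Ponce--Vega maximal function machinery (frequency localization, van der Corput on dyadic blocks, Schur test) for \eqref{maximal}. So you are supplying the substance behind the paper's citations rather than taking a different approach.
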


\begin{proof} For the proof of inequalities \eqref{homoge} and \eqref{maximal} see
\cite{KePoVe}. The inequality \eqref{nhplusdual} follows by
interpolation.
\end{proof}

\begin{proposition}\label{prop*}\hskip20pt \begin{enumerate}
\item[(i)] Given $\phi\in L^{\infty}(\R)$, with $\partial_x^{\alpha}\phi\in L^2(\R)$ for $\alpha=1,2$, then
for any $\theta\in(0,1)$
\begin{equation}\label{*1}
\|[J^{\theta};\phi]f\|_2\le c_{\theta,\phi}\,\|f\|_2.
\end{equation}
\item[(ii)] If $\eta\in(0,1]$, then
\begin{equation}\label{*1b}
\|J^{\eta}(fg)-fJ^{\eta}g\|_2\le c\,\|\widehat{\partial_x f}\|_1\|g\|_2.
\end{equation}
\end{enumerate}
\end{proposition}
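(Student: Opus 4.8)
The plan is to deduce both estimates from a single pointwise bound for the Fourier symbol of the relevant commutator, combined with Plancherel's identity and Young's convolution inequality; by a standard density argument it suffices to work with Schwartz functions. I would prove part (ii) first, because part (i) falls out of it at once.

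For (ii) I would pass to the Fourier side. Since the transform of a product is a constant multiple of the convolution of the transforms,
\[
\widehat{J^{\eta}(fg)-fJ^{\eta}g}(\xi)=c\int\big(\langle\xi\rangle^{\eta}-\langle\xi-\zeta\rangle^{\eta}\big)\,\widehat f(\zeta)\,\widehat g(\xi-\zeta)\,d\zeta .
\]
The only real computation is the elementary estimate $|\langle\xi\rangle^{\eta}-\langle\xi-\zeta\rangle^{\eta}|\le c\,|\zeta|$, valid for $\eta\in(0,1]$, which I would obtain from the fact that $s\mapsto\langle s\rangle^{\eta}=(1+s^2)^{\eta/2}$ has derivative of modulus $\eta\,|s|\,(1+s^2)^{\eta/2-1}\le\eta\,|s|\,(1+s^2)^{-1/2}\le 1$ when $\eta\le1$, i.e. $\langle\cdot\rangle^{\eta}$ is $1$-Lipschitz. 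Inserting this and using $|\zeta|\,|\widehat f(\zeta)|=|\widehat{\partial_x f}(\zeta)|$ bounds the transform of the commutator pointwise by $c\,(|\widehat{\partial_x f}|\ast|\widehat g|)(\xi)$; Plancherel followed by Young's inequality ($L^1\ast L^2\hookrightarrow L^2$) then gives $\|J^{\eta}(fg)-fJ^{\eta}g\|_2\le c\,\|\widehat{\partial_x f}\|_1\,\|g\|_2$, which is \eqref{*1b}.

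For (i) I would simply specialize \eqref{*1b}, taking $\phi$ in the role of $f$, the given $f$ in the role of $g$, and $\theta\in(0,1)\subset(0,1]$ in the role of $\eta$; this yields $\|[J^{\theta};\phi]f\|_2\le c\,\|\widehat{\partial_x\phi}\|_1\,\|f\|_2$. The remaining point is to see that $\|\widehat{\partial_x\phi}\|_1<\infty$: from $\partial_x\phi,\partial_x^2\phi\in L^2$ we get $\partial_x\phi\in H^1(\R)$, hence $\langle\cdot\rangle\,\widehat{\partial_x\phi}\in L^2$, and Cauchy--Schwarz against $\langle\cdot\rangle^{-1}\in L^2(\R)$ shows the integral is finite; this fixes the constant $c_{\theta,\phi}=c\,\|\widehat{\partial_x\phi}\|_1$ in \eqref{*1}. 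The hypothesis $\phi\in L^{\infty}(\R)$ is needed only to make $\phi f$ and the operator $[J^{\theta};\phi]$ well defined on $L^2$.

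Since the whole argument is soft, I do not anticipate a genuine obstacle. The two places that call for a little care are getting the symbol inequality with the correct exponent, so that the bound by $c|\zeta|$ is uniform in $\xi$ (this is exactly where $\eta\le1$ is used), and, in (i), the passage from the $L^2$ hypotheses on $\partial_x\phi$ and $\partial_x^2\phi$ to the integrability $\widehat{\partial_x\phi}\in L^1$.
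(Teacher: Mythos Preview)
Your proposal is correct and follows essentially the same approach as the paper: pass to the Fourier side, use the Lipschitz bound $|\langle\xi\rangle^{\eta}-\langle\zeta\rangle^{\eta}|\le c|\xi-\zeta|$ (the paper phrases this via the mean value theorem), and finish with Young's inequality, controlling $\|\widehat{\partial_x\phi}\|_1$ by $\|\partial_x\phi\|_{H^1}$ for part (i). The only cosmetic difference is that you prove (ii) first and deduce (i) as a specialization, whereas the paper writes out the two nearly identical computations separately.
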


\begin{proof} We first prove \eqref{*1}. Since
\begin{equation*}
\begin{split}
\big([J^{\theta};\phi] f\big)^{\wedge}(\xi)&=(J^{\theta}(\phi f)-\phi J^{\theta}f\big)^{\wedge}(\xi)\\
&=\int \Big((1+\xi^2)^{\theta/2} -(1+\eta^2)^{\theta/2}\Big)\widehat{\phi}(\xi-\eta)\widehat{f}(\eta)d\eta,
\end{split}
\end{equation*}
the mean value theorem leads to
\begin{equation*}
\big|\big([J^{\theta};\phi]f\big)^{\wedge}(\xi)\big|
\le c_{\theta}\,\int |\xi-\eta| |\widehat{\phi}(\xi-\eta)||\widehat{f}(\eta)|\,d\eta
=c_{\theta} \big(|\widehat{\partial_x\phi}|\ast |\widehat{f}\,|\big)(\xi).
\end{equation*}
Then by Young's inequality
\begin{equation*}
\begin{split}
\|[J^{\theta};\phi]f\|_2&\le c_{\theta} \| |\widehat{\partial_x\phi}|\ast |\widehat{f}|\|_2
\le c_{\theta}\|\widehat{\partial_x\phi}\|_1\|\widehat{f}\,\|_2\\
&\le c_{\theta}\|\partial_x\phi\|_{1,2}\|f\|_2\le c_{\theta,\phi}\|f\|_2.
\end{split}
\end{equation*}

To show \eqref{*1b} we notice that
\begin{equation*}
\begin{split}
\|J^{\eta}(fg)-fJ^{\eta}g\|_2&\le \|\int |(1+|\xi|^2)^{\eta/2}-(1+|\zeta|^2)^{\eta/2}|\widehat{f}(\xi-\zeta)||\widehat{g}(\zeta)|d\zeta\|_2\\
&\le \|\int |\xi-\zeta| |\widehat{f}(\xi-\zeta)||\widehat{g}(\zeta)|d\zeta\|_2\le \||\widehat{\partial_x f}|\ast|\widehat{g}|\,\|\\
&\le \|\widehat{\partial_x f}\|_1\|g\|_2.
\end{split}
\end{equation*}
\end{proof}

\begin{proposition}\label{prop**}
Given  $\phi\in L^{\infty}(\R)$, with $\partial_x^{\alpha}\phi\in L^2(\R)$ for $\alpha=1,2$, then
for any $\theta\in(0,1)$
\begin{equation}\label{*2}
\|J^{\theta}(\phi f)\|_2\le c_{\theta,\phi}\,\|J^{\theta}f\|_2.
\end{equation}
\end{proposition}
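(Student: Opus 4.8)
The plan is to reduce the estimate \eqref{*2} to the commutator estimate \eqref{*1} already proved in Proposition \ref{prop*}. Writing $J^{\theta}(\phi f)=\phi J^{\theta}f+[J^{\theta};\phi]f$, the triangle inequality gives
\begin{equation*}
\|J^{\theta}(\phi f)\|_2\le \|\phi J^{\theta}f\|_2+\|[J^{\theta};\phi]f\|_2\le \|\phi\|_{\infty}\|J^{\theta}f\|_2+\|[J^{\theta};\phi]f\|_2.
\end{equation*}
The first term is already of the desired form since $\phi\in L^{\infty}(\R)$. So everything comes down to controlling the commutator term by $\|J^{\theta}f\|_2$ rather than by $\|f\|_2$. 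Applying \eqref{*1} directly only yields the bound $c_{\theta,\phi}\|f\|_2$, which is not quite what is claimed, so a small additional argument is needed; but of course $\|f\|_2\le \|J^{\theta}f\|_2$, so in fact \eqref{*1} immediately gives $\|[J^{\theta};\phi]f\|_2\le c_{\theta,\phi}\|f\|_2\le c_{\theta,\phi}\|J^{\theta}f\|_2$, and we are done.

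More precisely, I would argue as follows. By Proposition \ref{prop*}(i), the hypotheses on $\phi$ (namely $\phi\in L^{\infty}(\R)$ and $\partial_x^{\alpha}\phi\in L^2(\R)$ for $\alpha=1,2$) guarantee $\|[J^{\theta};\phi]f\|_2\le c_{\theta,\phi}\|f\|_2$. Since $\langle\xi\rangle^{\theta}\ge 1$ pointwise, Plancherel gives $\|f\|_2=\|\widehat{f}\|_2\le \|\langle\xi\rangle^{\theta}\widehat{f}\|_2=\|J^{\theta}f\|_2$. Combining,
\begin{equation*}
\|J^{\theta}(\phi f)\|_2\le \|\phi\|_{\infty}\|J^{\theta}f\|_2+c_{\theta,\phi}\|f\|_2\le \big(\|\phi\|_{\infty}+c_{\theta,\phi}\big)\|J^{\theta}f\|_2=:c_{\theta,\phi}\|J^{\theta}f\|_2,
\end{equation*}
which is \eqref{*2}.

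There is essentially no obstacle here: the only point requiring care is bookkeeping of the constant, which depends on $\theta$ and on $\phi$ only through $\|\phi\|_{\infty}$ and $\|\partial_x\phi\|_{1,2}$ (equivalently $\|\partial_x^{\alpha}\phi\|_2$ for $\alpha=1,2$), exactly the quantities appearing in Proposition \ref{prop*}(i). One could alternatively bound the commutator more sharply using \eqref{*1b} with $\eta=\theta$, writing $[J^{\theta};\phi]f=J^{\theta}(\phi f)-\phi J^{\theta}f$ and noting $\|[J^{\theta};\phi]f\|_2\le \|\widehat{\partial_x\phi}\|_1\|f\|_2$; but this gives nothing new for the present purpose since we only need a bound by $\|f\|_2$. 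Hence the decomposition $J^{\theta}(\phi f)=\phi J^{\theta}f+[J^{\theta};\phi]f$ together with Proposition \ref{prop*} suffices, and the proof is short.
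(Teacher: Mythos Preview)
Your proof is correct and follows essentially the same approach as the paper: the paper simply writes $J^{\theta}(\phi f)=[J^{\theta},\phi]f+\phi J^{\theta}f$ and invokes the hypotheses together with Proposition~\ref{prop*}~\eqref{*1}, which is exactly your decomposition and bound (with the trivial inequality $\|f\|_2\le\|J^{\theta}f\|_2$ left implicit).
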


\begin{proof} We just need to write
\begin{equation*}
J^{\theta}(\phi f)=[J^{\theta},\phi] f+\phi\,J^{\theta}f
\end{equation*}
and use the hypotheses and Proposition \ref{prop*} \eqref{*1}.
\end{proof}

 We recall the following  characterization of the $L^p_s(\R^n)\!=\!(1-\Delta)^{-s/2}L^p(\R^n)$ spaces given  in \cite{St1}, (see \cite{ArSm} for the case $p=2$).

 \begin{theorem}[\cite{St1}]
 \label{theorem9b}
Let $b\in (0,1)$ and $\; 2n/(n+2b)< p< \infty$. Then $f\in  L^p_b(\R^n)$ if and only if
\begin{equation}\label{d1}
\;(a)\hskip10pt f\in L^p(\R^n),\hskip3cm\text{\hskip5cm }\\
\end{equation}
\begin{equation}\label{d1b}
\;(b)\hskip10pt \mathcal D^b f(x)=(\int_{\R^n}\frac{|f(x)-f(y)|^2}{|x-y|^{n+2b   }}dy)^{1/2}\in L^p(\R^n),
\text{\hskip2cm}
\end{equation}
with
 \begin{equation}
\label{d1-norm}
\|f\|_{b,p}\equiv  \|(1-\Delta)^{b} f\|_p=\|J^bf\|_p\simeq \|f\|_p+\|D^b    f\|_p\simeq \|f\|_p
+\|\mathcal D^b  f\|_p.
\end{equation}
 \end{theorem}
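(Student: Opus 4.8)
The plan is to reduce the equivalence \eqref{d1-norm} to its homogeneous core,
\[
\|D^b f\|_p\simeq\|\mathcal D^b f\|_p,
\]
and to prove the latter by realizing $\mathcal D^b$ as the norm of a Hilbert-space-valued singular integral. First I would dispose of the Bessel-to-Riesz comparison $\|J^bf\|_p\simeq\|f\|_p+\|D^bf\|_p$: the symbol $\langle\xi\rangle^b/(1+|\xi|^b)$ and its reciprocal satisfy Mikhlin's condition, so both directions follow from the Mikhlin--H\"ormander multiplier theorem on $L^p(\R^n)$, $1<p<\infty$. It then remains to compare $\|D^bf\|_p$ with $\|\mathcal D^bf\|_p$. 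Changing variables $y=x-t$ in \eqref{d1b} rewrites the square function as
\[
\mathcal D^bf(x)=\Big(\int_{\R^n}\frac{|f(x-t)-f(x)|^2}{|t|^{n+2b}}\,dt\Big)^{1/2}=\|\Delta_\cdot f(x)\|_{\mathcal K},
\]
where $\Delta_t f(x)=f(x-t)-f(x)$ is a finite difference and $\mathcal K=L^2(\R^n,d\mu_b)$, $d\mu_b(t)=|t|^{-n-2b}dt$.

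The case $p=2$ is exact and drives everything. Since $\widehat{\Delta_t f}(\xi)=(e^{-it\cdot\xi}-1)\widehat f(\xi)$, Plancherel in $x$ followed by Fubini gives
\[
\|\mathcal D^bf\|_2^2=\int_{\R^n}|\widehat f(\xi)|^2\Big(\int_{\R^n}\frac{|e^{-it\cdot\xi}-1|^2}{|t|^{n+2b}}\,dt\Big)\,d\xi,
\]
and a scaling/rotation argument shows the inner integral equals $c_b|\xi|^{2b}$ with $0<c_b<\infty$ precisely because $0<b<1$; hence $\|\mathcal D^bf\|_2=c_b^{1/2}\|D^bf\|_2$. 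To pass to general $p$, set $g=D^bf$ and view the map $T\colon g\mapsto(\Delta_t f)_t$ as a $\mathcal K$-valued convolution $Tf(x)=\int_{\R^n}K(x-z)g(z)\,dz$ whose kernel $K(w)\in\mathcal K$ is $K(w)(t)=R_b(w-t)-R_b(w)$, with $R_b(w)=c|w|^{b-n}$ the Riesz kernel of order $b$ (so $f=D^{-b}g=R_b*g$). A homogeneity computation gives $\|K(w)\|_{\mathcal K}\sim|w|^{-n}$, so $K$ is a genuine Calder\'on--Zygmund kernel taking values in $\mathcal K$, and the $L^2(\R^n)\to L^2(\R^n;\mathcal K)$ bound for $T$ is exactly the identity just proved.

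For $1<p<\infty$ I would invoke the Benedek--Calder\'on--Panzone vector-valued Calder\'on--Zygmund theorem: granted the $L^2$ bound, it suffices to verify H\"ormander's condition $\int_{|w|\ge 2|w_0|}\|K(w-w_0)-K(w)\|_{\mathcal K}\,dw\le c$, which follows from the pointwise estimate $\|\nabla_w K(w)\|_{\mathcal K}\lesssim|w|^{-n-1}$; this yields $\|\mathcal D^bf\|_p\lesssim\|D^bf\|_p$. For the reverse inequality I would build a left inverse $U\colon L^p(\R^n;\mathcal K)\to L^p(\R^n)$ acting, on the Fourier side in the $x$-variable, by
\[
\widehat{U\Phi}(\xi)=c_b^{-1}|\xi|^{b}\int_{\R^n}\frac{\overline{(e^{-it\cdot\xi}-1)}}{|t|^{n+2b}}\,\widehat{\Phi_t}(\xi)\,dt,
\]
where $\Phi=(\Phi_t)_t$, so that $U(\Delta_\cdot f)=D^bf$ by the same scaling identity. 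Checking that $U$ is again a vector-valued Calder\'on--Zygmund operator ($L^2$ bound plus a H\"ormander condition on its $\mathcal K$-valued kernel) gives $\|D^bf\|_p=\|U(\Delta_\cdot f)\|_p\lesssim\|\mathcal D^bf\|_p$. Combining the two inequalities with the Bessel reduction yields \eqref{d1-norm}, and the two-sided bound identifies $L^p_b$ with the set of $f\in L^p$ satisfying \eqref{d1}--\eqref{d1b}.

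I expect the main obstacle to be twofold. First, the H\"ormander estimates must be carried out in the Hilbert-space norm $\mathcal K$: one must control $\|K(w-w_0)-K(w)\|_{\mathcal K}$ uniformly, splitting the $t$-integral into $|t|\lesssim|w|$, $|t|\sim|w|$ (where the singularity of $R_b$ meets the difference), and $|t|\gg|w|$, together with the analogous estimates for the kernel of $U$. Second, and more delicate, is the sharp range restriction $p>2n/(n+2b)$: this is exactly the threshold at which the converse can fail, and below it one can exhibit $f\in L^p$ with $\mathcal D^bf\in L^p$ but $f\notin L^p_b$; in the argument it surfaces as the integrability condition that makes $U$ well defined and its $\mathcal K$-valued kernel admissible for the vector-valued theory, and pinning down this sharp threshold is the crux of the matter, for which we follow Stein \cite{St1}.
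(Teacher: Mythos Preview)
The paper does not prove this theorem at all: immediately after the statement it writes ``For the proof of this theorem we refer the reader to \cite{St1}.'' There is therefore no argument in the paper to compare against; the result is quoted as a black box and only its consequences \eqref{pointwise2}--\eqref{pointwise2a} are used.

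Your outline is a faithful sketch of the standard proof, essentially Stein's own argument in \cite{St1} (see also Strichartz's later exposition). The reduction $\|J^bf\|_p\simeq\|f\|_p+\|D^bf\|_p$ via Mikhlin is routine, and realizing $\mathcal D^bf$ as the $L^2(\R^n,|t|^{-n-2b}dt)$-norm of the difference operator, then appealing to the Benedek--Calder\'on--Panzone vector-valued Calder\'on--Zygmund theory, is exactly the mechanism Stein uses. Your identification of the two delicate points---the H\"ormander condition in the $\mathcal K$-norm and the emergence of the threshold $p>2n/(n+2b)$ in the converse direction---is accurate. One small caution: writing $f=R_b\ast g$ globally is only justified a priori on a dense class (Schwartz functions, say), so the argument should be phrased on such a class and then closed by density; and for the reverse inequality the cleaner route in \cite{St1} is not to build an explicit left inverse $U$ but to use the representation $D^bf=c\int_{\R^n}\big(f(x)-f(x-t)\big)|t|^{-n-b}\,dt$ (principal value) and bound it by Cauchy--Schwarz in $t$ against a suitable auxiliary integral, which is where the lower restriction on $p$ appears naturally. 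Either way, your plan is sound and matches the cited source; it simply goes well beyond what the paper itself supplies.
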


For the proof of this theorem we refer the reader to  \cite{St1}.
  One sees that from \eqref{d1b} for $p=2$ and $b\in(0,1)$ one has
\begin{equation}\label{pointwise2}
\|\mathcal D^b (fg)\|_2\leq \|f\, \mathcal D^b g\|_2  +\|g\, \mathcal D^b f\|_2,
\end{equation}
and
\begin{equation}\label{pointwise2a}
\|\mathcal D^b f\|_2= c\, \|D^b f\|_2.
\end{equation}
We shall use these estimates throughout in our arguments.

As   an application of Theorem \ref{theorem9b}
we also  have the following estimate:

 \begin{proposition}\label{propositionB}
 Let $b \in (0,1)$. For  any $t>0$
 \begin{equation}
 \label{pointwise-es}
  \mathcal D^b      (e^{-it  |x|^{1+a}x})\leq c(|t|^{b/(2+a)}+|t|^b    |x|^{(1+a)b}   ).
  \end{equation}
 \end{proposition}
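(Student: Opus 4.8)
The plan is to work directly from the definition \eqref{d1b} of the square function. Writing $\phi(x)=|x|^{1+a}x=\sgn(x)|x|^{2+a}$, which is $C^1$ on $\R$ with $\phi'(x)=(2+a)|x|^{1+a}$, we have
$$
\mathcal D^b(e^{-it\phi})(x)^2=\int_{\R}\frac{|e^{-it\phi(x)}-e^{-it\phi(y)}|^2}{|x-y|^{1+2b}}\,dy,
$$
and the idea is to split this integral at $|x-y|=\delta$, where $\delta>0$ will be chosen at the end depending on $|x|$ and $|t|$.

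On the region $|x-y|>\delta$ I would use the trivial bound $|e^{i\alpha}-e^{i\beta}|\le 2$, which after integrating $|x-y|^{-1-2b}$ (convergent since $b>0$) contributes $\le c\,\delta^{-2b}$. On the region $|x-y|\le\delta$ I would use $|e^{i\alpha}-e^{i\beta}|\le|\alpha-\beta|$ together with the mean value theorem: for $|x-y|\le\delta$,
$$
|\phi(x)-\phi(y)|\le(2+a)\,(\max(|x|,|y|))^{1+a}\,|x-y|\le(2+a)\,(|x|+\delta)^{1+a}\,|x-y|.
$$
Integrating $|x-y|^{1-2b}$ over $|x-y|\le\delta$ (convergent since $b<1$) then contributes $\le c\,|t|^2(|x|+\delta)^{2(1+a)}\delta^{2-2b}$. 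Taking square roots yields, for \emph{every} $\delta>0$,
$$
\mathcal D^b(e^{-it\phi})(x)\le c\Big(\delta^{-b}+|t|\,(|x|+\delta)^{1+a}\,\delta^{1-b}\Big).
$$

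The last step is to choose $\delta$ optimally, which splits into two cases. If $|x|\le|t|^{-1/(2+a)}$, take $\delta=|t|^{-1/(2+a)}$; then $|x|+\delta\le 2\delta$, and both terms on the right collapse to $c\,|t|^{b/(2+a)}$ (using $\delta^{-b}=|t|^{b/(2+a)}$ and $|t|\,\delta^{2+a-b}=|t|^{b/(2+a)}$). If $|x|>|t|^{-1/(2+a)}$, take $\delta=(|t|\,|x|^{1+a})^{-1}$; the case hypothesis is exactly $|t|\,|x|^{2+a}>1$, which forces $\delta<|x|$, so $|x|+\delta\le 2|x|$, and both terms collapse to $c\,|t|^b|x|^{(1+a)b}$ (using $\delta^{-b}=(|t|\,|x|^{1+a})^b$ and $|t|\,|x|^{1+a}\delta^{1-b}=(|t|\,|x|^{1+a})^b$). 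Adding the two cases gives \eqref{pointwise-es}. I do not expect a genuine obstacle; the only point needing care is the verification in the second case that the chosen $\delta$ is smaller than $|x|$, so that the factor $(|x|+\delta)^{1+a}$ can be absorbed into $|x|^{1+a}$ — and this is precisely equivalent to the defining inequality of that case.
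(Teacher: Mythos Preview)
Your argument is correct and self-contained. The paper does not actually prove this proposition; it simply refers the reader to \cite{NaPo} for the proof, so there is no in-paper argument to compare against. Your split-and-optimize approach (bounding the far region trivially, the near region via the mean value theorem, then choosing $\delta$ to balance the two contributions in the regimes $|t||x|^{2+a}\le 1$ and $|t||x|^{2+a}>1$) is the standard way such pointwise estimates on $\mathcal D^b$ of an oscillatory phase are obtained, and is essentially what one finds in \cite{NaPo} for the analogous Schr\"odinger phase. The only delicate point, as you note, is ensuring $\delta<|x|$ in the second case so that $(|x|+\delta)^{1+a}\sim |x|^{1+a}$, and you handle this correctly.
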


For the proof of Proposition \ref{propositionB} we refer to \cite{NaPo}.

Also as consequence of  the estimate \eqref{pointwise2} one has  the following
interpolation inequality.

\begin{lemma}\label{lemma1}
Let $\alpha,\,b>0$. Assume that $ J^{\alpha}f=(1-\Delta)^{\alpha/2}f\in L^2(\mathbb R)$ and \newline
$\ji x\jd^{b}f=(1+|x|^2)^{b/2}f\in L^2(\mathbb R)$. Then for any $\theta \in (0,1)$
\begin{equation}\label{complex}
\|J^{ \theta \alpha}(\ji x\jd^{(1-\theta) b} f)\|_2\leq c\|\ji x\jd^b f\|_2^{1-\theta}\,\|J^{\alpha}f\|_2^{\theta}.
\end{equation}
 Moreover, the inequality \eqref{complex} is still valid with $\ji x\jd_N^{\theta}$ in \eqref{trun} instead of $\langle x\rangle$ with a
constant $c$ independent of $N$.
\end{lemma}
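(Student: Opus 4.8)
The plan is to deduce \eqref{complex} from \emph{Stein's complex interpolation theorem}, realizing the left‑hand side as the value at $z=\theta$ of an analytic family of $L^2$‑pairings that interpolates between the two factors on the right. By a standard density argument it suffices to prove the inequality for $f\in\s(\R)$: any $f$ with $J^{\alpha}f,\ji x\jd^{b}f\in L^2$ is a limit of Schwartz functions in $H^{\alpha}\cap L^2(\ji x\jd^{2b}dx)$, and the middle quantity is continuous along such a sequence since it is dominated by the geometric mean of the two controlled norms.

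So fix $f\in\s(\R)$, set $A=\|\ji x\jd^{b}f\|_2$ and $B=\|J^{\alpha}f\|_2$, and for $g\in L^2$ with $\|g\|_2\le 1$ define, on the strip $S=\{z\in\C:0\le\mathrm{Re}\,z\le 1\}$,
$$
F(z)=e^{z^{2}}\int_{\R}J^{z\alpha}\big(\ji x\jd^{(1-z)b}f\big)(x)\,\overline{g(x)}\,dx .
$$
Since $f$ is Schwartz and $\ji x\jd^{(1-z)b}$ is smooth with polynomial growth bounded uniformly for $z\in S$, the function $\ji x\jd^{(1-z)b}f$ stays in $\s(\R)$ and depends analytically on $z$; applying the multiplier $\ji\xi\jd^{z\alpha}$ and pairing with $g$ preserves analyticity, while the factor $e^{z^{2}}$ turns the (at most polynomial in $\mathrm{Im}\,z$) growth estimates below into boundedness of $F$ on $S$, so that the three‑lines theorem applies. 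On $\mathrm{Re}\,z=0$: $\ji\xi\jd^{iy\alpha}$ has modulus one, so $J^{iy\alpha}$ is an $L^2$‑isometry; also $|\ji x\jd^{-iyb}|\equiv 1$ and $|e^{(iy)^{2}}|=e^{-y^{2}}\le 1$, whence
$$
|F(iy)|\le\|\ji x\jd^{(1-iy)b}f\|_2=\|\ji x\jd^{b}f\|_2=A .
$$
On $\mathrm{Re}\,z=1$: writing $J^{(1+iy)\alpha}=J^{iy\alpha}J^{\alpha}$ and using again that $J^{iy\alpha}$ is an isometry,
$$
\|J^{(1+iy)\alpha}\big(\ji x\jd^{-iyb}f\big)\|_2=\|J^{\alpha}\big(\ji x\jd^{-iyb}f\big)\|_2 .
$$
Now $\ji x\jd^{-iyb}$ is a smooth function all of whose derivatives are bounded (the $k$‑th one by $c_k\ji y\jd^{k}$, $b$ being fixed), so multiplication by it is bounded on $H^{\alpha}$ with operator norm $\le c\,\ji y\jd^{N}$, $N=N(\alpha)$; this is a standard fractional Leibniz estimate, which for $\alpha\in(0,1)$ follows from \eqref{pointwise2}--\eqref{pointwise2a} upon noting the pointwise bound $\mathcal D^{\alpha}\big(\ji x\jd^{-iyb}\big)(x)\le c\,\ji y\jd$, and for general $\alpha>0$ by iteration. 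Hence $|F(1+iy)|\le c\,\ji y\jd^{N}e^{1-y^{2}}B\le c'\,B$.

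Applying Hadamard's three‑lines theorem to $F$ on $S$ gives $|F(\theta)|\le A^{1-\theta}(c'B)^{\theta}$, so
$$
\Big|\int_{\R}J^{\theta\alpha}\big(\ji x\jd^{(1-\theta)b}f\big)\,\overline{g}\,dx\Big|=e^{-\theta^{2}}|F(\theta)|\le c\,A^{1-\theta}B^{\theta},
$$
and taking the supremum over $\|g\|_2\le 1$ yields \eqref{complex}. For the truncated weight $\ji x\jd_N$ the same proof applies verbatim: the constant depends only on finitely many seminorms of $\ji x\jd_N^{-iyb}$, each no larger than the corresponding seminorm of $\ji x\jd^{-iyb}$ (hence $\le c\,\ji y\jd^{k}$ uniformly in $N$), so the resulting $c$ is independent of $N$. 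The one genuinely technical point is the boundary estimate on $\mathrm{Re}\,z=1$ — the polynomial‑in‑$y$ control of $\|J^{\alpha}(\ji x\jd^{-iyb}f)\|_2$ — together with the routine but necessary verification that $F$ is analytic and of admissible growth so that the three‑lines theorem legitimately applies.
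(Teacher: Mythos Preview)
Your proof is correct and is precisely the complex-interpolation/three-lines argument that the paper invokes by reference: the paper does not give its own proof of the lemma but defers to \cite{FoPo}, where the argument is the one you wrote. The paper's remark that the lemma follows from \eqref{pointwise2} reflects exactly your use of the pointwise Leibniz bound to control $\|J^{\alpha}(\ji x\jd^{-iyb}f)\|_2$ at the boundary $\mathrm{Re}\,z=1$.
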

We refer to \cite{FoPo} for the proof of Lemma \ref{lemma1}.

As a further  direct  consequence of Theorem \ref{theorem9b}  we deduce the
following result. It will be useful in several of our arguments.

 \begin{proposition}\label{prop3}
For any $\theta\in (0,1)$ and $\alpha>0$,
\begin{equation}\label{steinderiv}
\mathcal{D}^{\theta}\Big(|\xi|^{\alpha}\chi(\xi)\Big)(\eta)\sim
\begin{cases}
c\,|\eta|^{\alpha-\theta}+c_1,\hskip15pt\alpha\neq \theta,\hskip10pt |\eta| \ll 1,\\
\\
c\,(-\ln|\eta|)^{\frac{1}{2}},\hskip20pt \alpha=\theta,\hskip10pt|\eta| \ll 1,\\
\\
\dfrac{c}{|\eta|^{1/2+\theta}},\hskip78pt |\eta|\gg1,
\end{cases}
\end{equation}
with $\mathcal{D}^{\theta}\Big(|\xi|^{\alpha}\chi(\xi)\Big)(\cdot) $ continuous in $\eta\in\R-\{0\}$.

In particular, one has that
\begin{equation*}
\mathcal{D}^{\theta}\Big(|\xi|^{\alpha}\chi(\xi)\Big)\in L^2(\R)
\text{\hskip10pt if and only if\hskip10pt}\theta<\alpha+1/2.
\end{equation*}
Similar result holds for $\mathcal{D}^{\theta}\Big(|\xi|^{\alpha}\sgn(\xi)\chi(\xi)\Big)(\eta)$.

\end{proposition}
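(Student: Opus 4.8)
The plan is to argue directly from the characterization \eqref{d1b}. Write $h(\xi)=|\xi|^{\alpha}\chi(\xi)$; since $h$ is even so is $\mathcal{D}^{\theta}h$, so I may take $\eta>0$ and study
$$\mathcal{D}^{\theta}h(\eta)^2=\int_{\R}\frac{|h(\eta)-h(y)|^2}{|\eta-y|^{1+2\theta}}\,dy.$$
The idea is to isolate inside this integral the piece carrying the scale-invariant behaviour of $|\xi|^{\alpha}$ near the origin, and to treat the rest as a bounded, continuous remainder.

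First, for $|\eta|>2$ one has $h(\eta)=0$ while $\mathrm{supp}\,h\subset[-2,2]$, so $\mathcal{D}^{\theta}h(\eta)^2=\int_{|y|\le2}|h(y)|^2|\eta-y|^{-(1+2\theta)}\,dy=|\eta|^{-(1+2\theta)}\bigl(\|h\|_2^2+O(|\eta|^{-1})\bigr)$, giving the third line of \eqref{steinderiv} together with continuity for large $|\eta|$. For $0<\eta<1/4$, using $\chi\equiv1$ on $(-1,1)$, I would write
$$\mathcal{D}^{\theta}h(\eta)^2=\int_{|y|\le1/2}\frac{\bigl||\eta|^{\alpha}-|y|^{\alpha}\bigr|^2}{|\eta-y|^{1+2\theta}}\,dy+R(\eta),$$
where the remainder $R(\eta)$, namely the integral over $|y|>1/2$, obeys $0\le R(\eta)\le c\int_{1/2<|y|\le2}\bigl(|h(\eta)|^2+|h(y)|^2\bigr)\,dy=O(1)$ and is continuous near $\eta=0$ since $|\eta-y|\ge1/4$ on its domain. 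In the main term I substitute $y=\eta z$, obtaining
$$\int_{|y|\le1/2}\frac{\bigl||\eta|^{\alpha}-|y|^{\alpha}\bigr|^2}{|\eta-y|^{1+2\theta}}\,dy=\eta^{2\alpha-2\theta}\,J\Bigl(\tfrac1{2\eta}\Bigr),\qquad J(R):=\int_{|z|\le R}\frac{\bigl|1-|z|^{\alpha}\bigr|^2}{|1-z|^{1+2\theta}}\,dz.$$
The integrand of $J$ is integrable near $z=1$ because $|1-|z|^{\alpha}|^2\sim\alpha^2|1-z|^2$ there and $\theta<1$, is bounded near $z=0$, and decays like $|z|^{2\alpha-1-2\theta}$ as $|z|\to\infty$. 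Hence, as $R\to\infty$, $J(R)\to J(\infty)\in(0,\infty)$ if $\alpha<\theta$, $J(R)=c\ln R+O(1)$ if $\alpha=\theta$, and $J(R)\sim cR^{2\alpha-2\theta}$ if $\alpha>\theta$. Multiplying by $\eta^{2\alpha-2\theta}$ with $R=1/(2\eta)$ and adding $R(\eta)$ reproduces the first two lines of \eqref{steinderiv} (up to comparability, with $c_1$ a generic constant); continuity of $\mathcal{D}^{\theta}h$ on $\R\setminus\{0\}$ then follows by dominated convergence, the kernel singularity at $y=\eta$ being harmless because $h$ is Lipschitz away from the origin.

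To deduce the $L^2$ claim I would split $\R$ into $\{0<|\eta|\le1/4\}$, $\{1/4\le|\eta|\le2\}$, and $\{|\eta|>2\}$: on the middle compact set $\mathcal{D}^{\theta}h$ is continuous, hence bounded; for $|\eta|>2$ it is $O(|\eta|^{-1/2-\theta})$, square-integrable at infinity since $\theta>0$; and for $0<|\eta|\le1/4$ it behaves at worst like $|\eta|^{\alpha-\theta}$, or like $(-\ln|\eta|)^{1/2}$ when $\alpha=\theta$, which lies in $L^2_{\mathrm{loc}}$ exactly when $2(\alpha-\theta)>-1$, i.e. $\theta<\alpha+1/2$ (automatic, with boundedness, when $\alpha>\theta$). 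The variant with $\sgn(\xi)$ is handled identically: the same scaling now leads to $J_{\sgn}(R)=\int_0^R\frac{(1-z^{\alpha})^2}{(1-z)^{1+2\theta}}\,dz+\int_{-R}^0\frac{(1+|z|^{\alpha})^2}{(1+|z|)^{1+2\theta}}\,dz$, whose integrand again has no singularity except at $z=1$ in the first term and decays like $|z|^{2\alpha-1-2\theta}$ at infinity, so the same trichotomy holds.

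The step I expect to be the main obstacle is the passage from the behaviour of $J(R)$ as $R\to\infty$ to the stated asymptotics of $\mathcal{D}^{\theta}h(\eta)$ as $\eta\to0$: one must check that the borderline integrability at $z=1$ — which is precisely where $\theta<1$ enters — together with the exact tail exponent $2\alpha-1-2\theta$ combine with the prefactor $\eta^{2\alpha-2\theta}$ to produce the three regimes in \eqref{steinderiv}, and that the bounded remainder $R(\eta)$ never dominates the main term. The remaining computations are routine.
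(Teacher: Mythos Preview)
Your proof is correct and takes a cleaner route than the paper's. The paper shifts the integration variable so that the kernel becomes $|\xi|^{-(1+2\theta)}$ and then, for $0<\eta\ll1$, splits the domain into $|\xi|<\eta/2$ and $\eta/2<|\xi|<2$; the near piece is bounded via the mean-value estimate $|(\xi+\eta)^{\alpha}-\eta^{\alpha}|\le c_{\alpha}|\xi|\,\eta^{\alpha-1}$ and the far piece via $|(\xi+\eta)^{\alpha}+\eta^{\alpha}|\le c|\xi|^{\alpha}$. This yields only upper bounds, and the borderline case $\alpha=\theta$ is omitted. Your scaling substitution $y=\eta z$ exploits the homogeneity of $|\xi|^{\alpha}$ directly, reduces everything to the behaviour of a single integral $J(R)$ as $R\to\infty$, and delivers genuine two-sided asymptotics in all three regimes at once; the paper's near/far split is, in effect, the decomposition of $J$ into $|z|$ bounded versus $|z|$ large, carried out by hand. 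One small slip: in your $J_{\sgn}$ the denominator of the first integral should read $|1-z|^{1+2\theta}$ rather than $(1-z)^{1+2\theta}$, since the range includes $z>1$.
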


\begin{proof} We restrict ourselves to the case $\alpha\neq \theta$. First we consider the case
$\mathcal{D}^{\theta}\Big(|\xi|^{\alpha}\chi(\xi)\Big)$. It is easy to see
that for $\eta\neq 0$,  $\mathcal{D}^{\theta}\Big(|\xi|^{\alpha}\chi(\xi)\Big)(\eta)$ is continuous in $\epsilon<|\eta|<1/\epsilon$
for any $\,\epsilon>0$.

Let us consider $|\eta|<1/2$. Without loss of generality we assume that $\eta\in (0,1/2)$.  Thus
\begin{equation*}
\begin{split}
\Big(\mathcal{D}^{\theta}\Big(|\xi|^{\alpha}\chi(\xi)&\Big)(\eta)\Big)^2
=\int \dfrac{\Big(|\xi+\eta|^{\alpha}\chi(\xi+\eta)-|\eta|^{\alpha}\chi(\eta)\Big)^2}{|\xi|^{1+2\theta}}\,d\xi\\
&\leq  c\int_{-\eta/2}^{\eta/2}\dfrac{(|\xi+\eta|^{\alpha}-|\eta|^{\alpha})^2}{|\xi|^{1+2\theta}}\,d\xi+
 c\int_{\eta/2}^{2}\dfrac{(|\xi+\eta|^{\alpha}+|\eta|^{\alpha})^2}{|\xi|^{1+2\theta}}\,d\xi\\
&\equiv A_1+A_2.
 \end{split}
 \end{equation*}

 To bound $A_1$  we use that
 \begin{equation*}
 |(\xi+\eta)^{\alpha}-\eta^{\alpha}|\le |\xi|\dfrac{c_{\alpha}}{\eta^{1-{\alpha}}}\quad \forall \xi:\;\;-\eta/2<\xi<\eta/2.
 \end{equation*}
 Thus
 \begin{equation*}
 A_1\le c\int\limits_{-\eta/2}^{\eta/2}\frac{|\xi|^2}{\eta^{2(1-{\alpha})}|\xi|^{1+2\theta}}\,d\xi\le c\eta^{2({\alpha}-\theta)}.
 \end{equation*}

 For $A_2$ we have that
 \begin{equation*}
 |(\xi+\eta)^{\alpha}+\eta^{\alpha}|\le c \xi^{\alpha}\;\;\text{for}\;\;\xi:\;\;\eta/2\le \xi\le 2.
 \end{equation*}
 So
 \begin{equation*}
 A_2\le c\int\limits_{\eta/2}^{2}\dfrac{\xi^{2{\alpha}}}{\xi^{1+2\theta}}\,d\xi\le c\,\eta^{2({\alpha}-\theta)}+c_1,\;\;\;\;\text{if}\;\;\;\alpha\neq \theta,
 \end{equation*}
 where $c_1=0$ if $\theta>\alpha$.

 Let us consider the case $|\eta|>100$. Without loss of generality we assume $\eta>100$. Then
 \begin{equation*}
 \begin{split}
 \Big(\mathcal{D}^{\theta}(|\xi|^{\alpha}\chi(\xi))(\eta)\Big)^2&=\int \dfrac{(|\xi+\eta|^{\alpha}\chi(\xi+\eta))^2}{|\xi|^{1+2\theta}}d\xi\\
&\le c\int\limits_{-2-\eta}^{2-\eta}\frac{d\xi}{|\xi|^{1+2\theta}} \le \dfrac{c}{\eta^{1+2\theta}}.
\end{split}
\end{equation*}

Finally we consider $\mathcal{D}^{\theta}(|\xi|^{\alpha}\sgn(\xi)\chi(\xi))(\eta)$. We notice that the previous
computation for $|\eta|>100$ is similar, so we just need to consider the case $|\eta|<1$. Assuming
without loss of generality that $0<\eta<1$.

Since
\begin{equation*}
\begin{split}
\mathcal{D}^{\theta}(|\xi|^{\alpha}\sgn(\xi)&\chi(\xi))(\xi)=\\
&\int\limits_{-2-\eta}^{2-\eta}\dfrac{\Big(|\xi+\eta|^{\alpha}\sgn(\xi+\eta)\chi(\xi+\eta)
\!-\!|\eta|^{\alpha}\sgn(\eta)\chi(\eta)\Big)^2}{|\xi|^{1+2\theta}}\,d\xi.
\end{split}
\end{equation*}

The bound for $\xi+\eta>0$ is similar to that given before, so we assume $\xi+\eta<0$ ($\xi<-\eta$)
and consider
\begin{equation*}
\int\limits_{-2-\eta}^{-\eta}\frac{(|\xi+\eta|^{\alpha}+|\eta|^{\alpha})^2}{|\xi|^{1+2\theta}}\,d\xi=
\int\limits_{-2\eta}^{-\eta}\dots+\int\limits_{-2-\eta}^{-2\eta}\dots=\tilde{A}_1+\tilde{A}_2.
\end{equation*}

A familiar argument  shows that
\begin{equation*}
\tilde{A}_1\le c\frac{\eta^{2{\alpha}}}{\eta^{1+2\theta}}\,\eta=\eta^{2({\alpha}-\theta)},
\end{equation*}
and
\begin{equation*}
\tilde{A}_2\le \int\limits_{-2-\eta}^{-2\eta}\dfrac{|\xi|^{2{\alpha}}}{|\xi|^{1+2\theta}}\,d\xi=\eta^{2({\alpha}-\theta)}+c,
\end{equation*}
with $c=0$ if $\theta>{\alpha}$.

\end{proof}


\section{Proof of Theorem \ref{lwp-zsr}}

\begin{proposition}\label{prop1} If $u_0\in H^{1+a}(\R)$ and
$|x|^{\theta}\,u_0\in L^2(\R)$, $\theta\in(0,1)$, then the solution
 $u$ of the IVP \eqref{DGBO} satisfies
\begin{equation}
u\in C([0,T]:H^{1+a}(\R)\cap L^2(|x|^{2\theta})\equiv
Z_{1+a,\theta}).
\end{equation}
where $T$ is given by the local theory.
\end{proposition}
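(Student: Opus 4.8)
The plan is to obtain an a priori bound for $\||x|^{\theta}u(t)\|_2$ by a weighted energy estimate and then upgrade it to continuity in time. By the local theory in $H^{1+a}(\R)$ (which, thanks to the conservation law $I_2$, is global) we already know $u\in C([0,T]:H^{1+a})$; the only issue is to propagate the weight $|x|^{\theta}$. To keep every quantity finite while differentiating in time, I would use truncated weights: set $w_N=\langle x\rangle_N^{\theta}$, where $\langle x\rangle_N$ is the usual smooth, bounded truncation of $\langle x\rangle$ which equals $\langle x\rangle$ for $|x|\le N$ and whose derivatives $\partial_x^{j}\langle x\rangle_N$, $j\ge1$, are bounded uniformly in $N$. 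Since $\theta<1$, the functions $\partial_x^{j}w_N$ ($j\ge1$) are then bounded, lie in $L^{q}(\R)$ for every $q>1$, and satisfy $|\partial_x w_N|\le c\,w_N$, all uniformly in $N$. Put $v=v_N:=w_N u\in C([0,T]:H^{1+a})$.

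Differentiating $\tfrac12\|v(t)\|_2^{2}$: the equation gives $\partial_t v+D^{1+a}\partial_x v=-[w_N;D^{1+a}\partial_x]u-w_Nu\partial_xu$, and since $D^{1+a}\partial_x$ is skew-adjoint on $L^{2}$ (its symbol $i\xi|\xi|^{1+a}$ is purely imaginary), $\mathrm{Re}\,\langle D^{1+a}\partial_x v,v\rangle=0$, so
$$\frac{d}{dt}\|v(t)\|_2^{2}=-2\,\mathrm{Re}\,\langle[w_N;D^{1+a}\partial_x]u,v\rangle-2\,\mathrm{Re}\,\langle w_Nu\partial_xu,v\rangle.$$
The nonlinear term is harmless: $\langle w_Nu\partial_xu,v\rangle=\int w_N^{2}u^{2}\partial_xu\,dx=-\tfrac13\int\partial_x(w_N^{2})\,u^{3}\,dx$, and $|\partial_x(w_N^{2})|\le c\,w_N^{2}$ together with $H^{1+a}\hookrightarrow L^{\infty}$ give $|\langle w_Nu\partial_xu,v\rangle|\le c\,\|u\|_{H^{1+a}}\|v\|_2^{2}$. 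Everything therefore reduces to the commutator term, and I expect this to be the main obstacle: one has to prove that
$$\|[w_N;D^{1+a}\partial_x]u\|_2\le c\,\|u\|_{H^{1+a}}\qquad\text{with }c\text{ independent of }N.$$

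To handle the commutator I would split $[w_N;D^{1+a}\partial_x]u=[w_N;D^{1+a}]\partial_xu-D^{1+a}\big((\partial_xw_N)u\big)$. The second term is bounded by $c\,\|u\|_{H^{1+a}}$ by a fractional Leibniz argument, writing $D^{1+a}((\partial_xw_N)u)=(\partial_xw_N)D^{1+a}u+[D^{1+a};\partial_xw_N]u$ and using that $\partial_xw_N$, $\partial_x^{2}w_N$ are bounded (and in the relevant $L^{q}$) uniformly in $N$; note this estimate consumes exactly $1+a$ derivatives of $u$. For the first term, $\partial_xu\in H^{a}$ and $[w_N;D^{1+a}]$ is an operator of order $a$: writing $D^{1+a}=D^{a}D$ with $D=\mathcal H\partial_x$ and expanding by the Leibniz rule, one reduces to finitely many terms of Calder\'on-commutator and fractional-commutator type which are estimated in $L^{2}$ by $c\,\|\partial_xu\|_{H^{a}}\le c\,\|u\|_{H^{1+a}}$ using Lemma \ref{dmp1} and Proposition \ref{dmp2}, the constants depending only on finitely many $\|\partial_x^{j}w_N\|_{L^{\infty}}$ and $\|J^{\delta}\partial_x^{j}w_N\|_{L^{q}}$ with $j\ge1$, hence uniform in $N$. (It is natural that $s=1+a$ is the threshold here: an order-$a$ commutator acting on $\partial_xu\in H^{a}$ is precisely what lands in $L^{2}$.) This step could alternatively be carried out on the Fourier side through the pointwise Stein-type inequalities of Section 2, but the commutator route is the shortest.

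Granting this, Gronwall's inequality yields $\sup_{[0,T]}\|v_N(t)\|_2^{2}\le\big(\|w_Nu_0\|_2^{2}+cT\big)e^{cT}$ with $c$ depending only on $\sup_{[0,T]}\|u(t)\|_{H^{1+a}}$, hence independent of $N$; letting $N\to\infty$ and using Fatou's lemma gives $\sup_{[0,T]}\||x|^{\theta}u(t)\|_2<\infty$, i.e.\ $u\in L^{\infty}([0,T]:Z_{1+a,\theta})$. Finally, to upgrade to $u\in C([0,T]:Z_{1+a,\theta})$ I would argue as usual: $t\mapsto u(t)$ is weakly continuous into $L^{2}(|x|^{2\theta}dx)$ (from the integral equation and the bound just obtained), and the continuity of $t\mapsto\||x|^{\theta}u(t)\|_2$ follows either by reapplying the a priori estimate on short subintervals or by a Bona--Smith-type approximation of the datum in $Z_{1+a,\theta}$, combined with the known continuity in $H^{1+a}$.
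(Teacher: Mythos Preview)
Your approach is essentially the same as the paper's: truncated weights $\langle x\rangle_N^{\theta}$, an energy identity, commutator estimates based on Lemma~\ref{dmp1} and Proposition~\ref{dmp2}, and a limit $N\to\infty$. The only organizational difference is that the paper writes $\langle x\rangle_N^{\theta}D^{1+a}\partial_x u$ and peels off commutators one at a time (first $[D^{a};\langle x\rangle_N^{\theta}]$, then $\partial_x$, then $[D;\langle x\rangle_N^{\theta}]$ via $D=\mathcal H\partial_x$) until the remaining piece is exactly $D^{1+a}\partial_x(\langle x\rangle_N^{\theta}u)$, which vanishes against $\langle x\rangle_N^{\theta}u$; you invoke skew-adjointness first and then bound the full commutator $[w_N;D^{1+a}\partial_x]u$ in $L^{2}$, but the terms you must control are the same ones and are handled by the same two commutator lemmas.
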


\begin{proof}[Proof of Proposition \ref{prop1}]

We use the differential equation and the local theory such that
$u\in C([0,T]:H^{1+a}(\R))$ exists and is the limit of smooth
solutions.

We define for $\theta\in(0,1)$
\begin{equation}\label{trun}
\ji x\jd_{N}^{\theta}=
\begin{cases}
\ji x\jd^{\theta}=(1+x^2)^{\theta/2}, \text{\hskip1cm if}\;\; |x|\le N,\\
(2N)^{\theta}, \text{\hskip85pt if}\;\; |x|\ge 3N,
\end{cases}
\end{equation}
with $\ji x\jd^{\theta}_N$ smooth, even, nondecreasing for $|x|\ge 0$.

We multiply the equation in \eqref{DGBO}
by $\ji x\jd^{2\theta}_N\,u$ and integrate in the
$x$-variable to get
\begin{equation*}\label{e1.2}
\frac12 \frac{d}{dt}\int (\ji x\jd^{\theta}_N\,u)^2\,dx+
\underset{A_1}{\underbrace{\int \ji
x\jd^{\theta}_N\,D^{1+a}\partial_x u\ji x\jd^{\theta}_N\,u\,dx}}+
\underset{A_2}{\underbrace{\int \ji x\jd^{2\theta}_N\, u^2\partial_x
u\,dx}}=0.
\end{equation*}

To estimate $A_2$ we integrate by parts to get
\begin{equation}\label{e1.3}
A_2=\frac13\int \ji
x\jd^{2\theta}_N\,\partial_x(u^3)\,dx=-\frac13\int \partial_x(\ji
x\jd^{2\theta}_N)\,u^3\,dx.
\end{equation}

We shall use that
\begin{equation}\label{e1.4}
\partial_x (\ji x\jd^{2\theta}_N)\le c_{\theta}\ji x\jd^{2\theta-1}_N\le c_{\theta}\,\ji x\jd^{\theta}_N,
\end{equation}
since $\theta\in(0,1)$. Notice that $c_\theta$ is independent of
$N$. Thus
\begin{equation}\label{e1.5}
A_2\le \|\ji x\jd^{\theta}_N\, u\|_2\|u\|_2\|u\|_{\infty}.
\end{equation}

Now we turn to $A_1$. We write
\begin{equation}\label{e1.6}
\ji x\jd^{\theta}_N\, D^{1+a}\partial_x u=D^a(\ji
x\jd^{\theta}_N\,D\partial_x u) -[D^a;\ji
x\jd^{\theta}_N]\,D\partial_x u\equiv B_1+B_2.
\end{equation}

Using Proposition \ref{dmp2}
\begin{equation}\label{e1.7}
\|[D^a;\ji x\jd^{\theta}_N]\,D^{1-a}f\|_2\le
c\,\|J^{\delta}\partial_x \ji x\jd^{\theta}_N\|_q\|f\|_2,
\end{equation}
with $\delta>1/q$. Thus
\begin{equation}\label{e1.8}
\|B_2\|_2=\|[D^a;\ji x\jd^{\theta}_N]\,D\partial_x u\|_2\le
c_{\theta}\,\|D^a\partial_x u\|_2,
\end{equation}
with $c_{\theta}$ independent of $N$ since $\partial_x \ji
x\jd^{\theta}_N$ is bounded independent of $N$. Here we are assuming
that $\theta<1$ such that $J^{\delta}\partial_x \ji
x\jd^{\theta}_N\in L^q$ for appropriate values of $\delta$, $q$ with
$\delta>1/q$. When $\theta=1$, $\|J^{\delta}\partial_x \ji
x\jd^1_N\|_q$ is not bounded uniformly on $N$ by a constant and we
cannot do this.

Also observe that the bound $\|D^a\partial_x u\|_2$ is natural from the fact that
the operator $\Gamma=x-(2+a)tD^{1+a}$ which commutes with $\partial_t+D^{1+a}\partial_x$.

Hence it remains to consider $B_1$ in \eqref{e1.6}. We write
\begin{equation}\label{e1.9}
\begin{split}
B_1=D^a(\ji x\jd^{\theta}_N D\partial_x u)&= D^a\partial_x(\ji x\jd^{\theta}_N\,D u) -D^a((\partial_x \ji x\jd^{\theta}_N)Du)\\
&\equiv C_1+C_2,
\end{split}
\end{equation}
with
\begin{equation}\label{e1.10}
\begin{split}
\|C_2\|_2&=\|D^a((\partial_x \ji x\jd^{\theta}_N) Du)\|_2\\
&\le \|[D^a; \partial_x \ji x\jd^{\theta}_N]Du\|_2+\|\partial_x \ji x\jd^{\theta}_N\,D^{1+a}u\|_2\\
&\underset{(3)}{\le} c\,\|J^{\delta}\partial_x^2\ji
x\jd^{\theta}_N\|_q\|D^{1-a}u\|_2
+c\|D^{1+a}u\|_2\\
&\le c\,\|J^{1+a}u\|_2, \quad c\;\;\text{independent of}\;\; N,
\end{split}
\end{equation}
where in (3) we have used again \eqref{e1.7} (Proposition
\ref{dmp2}).

To estimate $C_1$ in \eqref{e1.9} we write
\begin{equation}\label{e1.11}
\begin{split}
C_1=D^a\partial_x (\ji x\jd^{\theta}_N\,Du)&=D^a\partial_x D( \ji x\jd^{\theta}_N\,u) -D^a\partial_x\,[D;\ji x\jd^{\theta}_N]u\\
&\equiv K_1+K_2.
\end{split}
\end{equation}

Since $D=\mathcal{H}\partial_x$ one has
\begin{equation}\label{e1.12}
\begin{split}
[D;\ji x\jd^{\theta}_N]f&= D(\ji x\jd^{\theta}_N\, f)- \ji x\jd^{\theta}_N\,Df\\
&= \hil\partial_x(\ji x\jd^{\theta}_N\,f)-\ji x\jd^{\theta}_N\,\hil\partial_x f\\
&= \hil((\partial_x\ji x\jd^{\theta}_N)f)-[\hil;\ji
x\jd^{\theta}_N]\partial_x f.
\end{split}
\end{equation}

Therefore
\begin{equation}\label{e1.13}
K_2=-D^a\partial_x \hil ((\partial_x\ji
x\jd^{\theta}_N)\,u)+D^a\partial_x[\hil;\ji
x\jd^{\theta}_N]\partial_x u \equiv Q_1+Q_2.
\end{equation}

To bound $Q_2$ we use the commutator estimate in Proposition
\ref{dmp1}
\begin{equation}\label{e1.14}
\|\partial^j[\hil;a]\partial^m f\|_2\le c\,\|\partial^{j+m}
a\|_{\infty}\|f\|_2,
\end{equation}
and interpolation ($\|D^af\|_2\le \|f\|_2^{1-a}\|Df\|^a_2$) to get
\begin{equation}\label{e1.15}
\begin{split}
\|Q_2\|_2&=\|D^a\partial_x[\hil;\ji x\jd^{\theta}_N]\partial_x u\|_2\\
&\le \|D\partial_x[\hil;\ji x\jd^{\theta}_N]\partial_x u\|_2^a
\|\partial_x[\hil;\ji x\jd^{\theta}_N]\partial_x u\|_2^{1-a}\\
&\le \|\partial_x^2[\hil;\ji x\jd^{\theta}_N]\partial_x u\|_2^a\,
\|\partial_x[\hil;\ji x\jd^{\theta}_N]\partial_x u\|_2^{1-a}\\
&\le c\,\big(\|\partial_x^3\ji
x\jd^{\theta}_N\|_{\infty}+\|\partial_x^2\ji
x\jd^{\theta}_N\|_{\infty}\big)\, \|u\|_2.
\end{split}
\end{equation}

Using previous arguments we also have
\begin{equation}\label{e1.16}
\begin{split}
\|Q_1\|_2&=\|\hil D^a\partial_x ((\partial_x\ji x\jd^{\theta}_N)\,u)\|_2\\
&\le \|D^a ((\partial_x^2\ji x\jd^{\theta}_N)\,u)\|_2+\|D^a((\partial_x\ji x\jd^{\theta}_N)\,\partial_xu)\|_2\\
&\le \big(\|D^a \partial_x^2\ji
x\jd^{\theta}_N\|_2\|u\|_{\infty}+\|\partial_x^2\ji
x\jd^{\theta}_N\|_{\infty}
\|D^au\|_2\big)\\
&\quad +\|[D^a;\partial_x\ji x\jd^{\theta}_N]\partial_x u\|_2
+\|(\partial_x\ji x\jd^{\theta}_N)D^a\partial_x u\|_2\\
&\le c\,\|J^{1+a}u\|_2.
\end{split}
\end{equation}

Finally, we turn to the term $K_1$ in \eqref{e1.11}, Parseval's identity yields
$$
\int D^a\partial_x D( \ji x\jd^{\theta}_N\,u)\,\ji x\jd^{\theta}_N\,
u =\int \partial_x D^{(1+a)/2}(\ji x\jd^{\theta}_N\,u)D^{(1+a)/2}(\ji x\jd^{\theta}_N\,u)\equiv 0.
$$

Since from the local existence theory \cite{Gu2} we know that
\begin{equation}\label{e1.18}
\underset{[0,T]}{\sup}\, \|u(t)\|_{1+a,2}=\underset{[0,T]}{\sup}\,
\|u(t)\|_{H^{1+a}}\le c,
\end{equation}
combining the above estimate and taking limit as $N\to\infty$ we
obtain
\begin{equation}\label{e1.19}
\underset{[0,T]}{\sup}\, \|\ji x\jd^{\theta}u(t)\|_{2}\le
\tilde{c}_{\theta}\quad\text{for all}\;\;\theta\in[0,1),
\end{equation}
which yields the result.

We observe that the argument above shows that if in addition to
$u_0\in H^{1+a}(\R)\cap L^2(|x|^{2\theta})$, $\theta\in (0,1)$,
$u_0\in H^{1+a+\alpha}(\R)$ with $D^{\alpha}u_0\in L^2(|x|^{2\theta})$, for $\alpha>0$,
then $D^{\alpha}u \in C([0,T] : H^{1+a}(\R)\cap L^2(|x|^{2\theta}))$, $\alpha>0$.

\end{proof}

\subsection{Case $s=1+a$, $r=1$} Let $u_0\in H^{1+a}(\R)\cap L^2(|x|^2\,dx)$. We observe that the
persistence result in this case was already proved by Colliander, Kenig and Stafillani \cite{CoKeSt}.
However, by convenience we present a different proof.

First notice that
\begin{equation}\label{e2.B1}
\begin{split}
x\,D^{1+a}\partial_x f&=(x\,D^{1+a}\partial_xf)^{\wedge\vee}=(i\partial_{\xi}\,(|\xi|^{1+a}\,i\xi\widehat{f}\,))^{\vee}\\
&=\big(-(2+a)|\xi|^{1+a}\widehat{f}+|\xi|^{1+a}i\xi i\partial_{\xi}\widehat{f}\,\big)^{\vee}\\
&=-(2+a)\,D^{1+a} f+D^{1+a}\partial_x(xf).
\end{split}
\end{equation}

Hence if $u$ satisfies
\begin{equation}\label{3.25a}
\partial_t u+D^{1+a}\partial_x u+u\partial_x u=0,
\end{equation}
then
\begin{equation}\label{e2.B2}
\partial_t(xu)+D^{1+a}\partial_x (xu)-(2+a)D^{1+a}u+x\,u\partial_x u=0.
\end{equation}

In this case the standard energy estimate argument shows that
\begin{equation}\label{e2.B21}
\frac{d}{dt}\|xu(t)\|_2^2\le c_a\|D^{1+a}u(t)\|_2\|xu(t)\|_2+\|u\|_{\infty}\|u(t)\|_2\|xu(t)\|_2.
\end{equation}
Since
\begin{equation}\label{e2.B22}
\sup_{[0,T]} \|u(t)\|_{1+a,2}\le c(a;T;\|u_0\|_{1+a}),
\end{equation}
one has from \eqref{e2.B21} that
\begin{equation}\label{e2.B23}
\sup_{[0,T]} \|xu(t)\|_{1+a,2}\le c(a;T;\|u_0\|_{1+a,2};\|xu_0\|_2).
\end{equation}

\begin{remark}\label{AAA} By taking derivatives $D^{\alpha}$ in the equation \eqref{3.25a} and repeating
the above argument we have that if in addition to $u_0\in H^{1+a}(\R)\cap L^2(|x|^2\,dx)$ one has
\begin{equation}\label{e2.B24}
D^{\alpha} u_0\in H^{1+a}(\R),\;\; xD^{\alpha}u_0\in L^2(\R), \;\;\text{for some}\;\;\alpha>0,
\end{equation}
then
\begin{equation}\label{e2.B25}
\sup_{[0,T]} \|x D^{\alpha}u(t)\|_2\le c(T;a;\|u_0\|_{1+a+\alpha,2};\|xu_0\|_2;\|xD^{\alpha}u_0\|_2).
\end{equation}
\end{remark}

\subsection{Case $s=2(1+a)$, $r\in(1,2)$} Let $u_0\in H^{2(1+a)}\cap L^2(|x|^{2r}\,dx)$, $r=1+\theta$,
$\theta\in(0,1)$.

Reapplying the method for the weight $\ji x\jd^{2\theta}_N$, $\theta\in (0,1)$, multiplying  the equation
\eqref{e2.B2}  by $\ji x\jd^{2\theta}_N xu$ and integrating the result, one gets
\begin{equation}\label{e2.B2A1}
\begin{split}
&\frac12 \frac{d}{dt} \int \big(\ji x\jd^{\theta}_N xu)^2\,dx
+\int \ji x\jd^{\theta}_ND^{1+a}\partial_x(xu)\ji x\jd^{\theta}_N xu\\
&-(2+a)\int \ji x\jd^{\theta}_N D^{1+a}u \ji x\jd^{\theta}_N xu
+\int \ji x\jd^{\theta}_N xu \partial_x\ji x\jd^{\theta}_N xu=0.
\end{split}
\end{equation}

From the previous analysis and Proposition \ref{prop1},
we only need to handle the last two terms in \eqref{e2.B2A1}.

First we notice that
\begin{equation}\label{e2.B3}
\int \ji x\jd^{\theta}_N\,D^{1+a}u\,\ji x\jd^{\theta}_Nx\,u\,dx\le
\|\ji x\jd^{\theta}_ND^{1+a}u\|_2 \|\ji x\jd^{\theta}_N\,xu\|_2.
\end{equation}
Next
\begin{equation*}
\ji x\jd^{\theta}_N D^{1+a}u= \underset{C_1}{\underbrace{-[D^a;\ji x\jd^{\theta}_N] D^{1-a}D^au}}
+ \underset{C_2}{\underbrace{D^a\ji x\jd^{\theta}_N\,Df}},
\end{equation*}
and by the commutator estimate
\begin{equation*}
\|C_1\|_2\le \|D^a u\|_2 \quad \text{uniformly  in $N$ for}\;\;\theta\in (0,1).
\end{equation*}

On the other hand,
\begin{equation*}
C_2= D^a\ji x\jd^{\theta}_N\,\partial_x\hil u= D^a\partial_x (\ji x\jd^{\theta}_N\,\hil u)
-D^a(\partial_x \ji x\jd^{\theta}_N\, \hil u)\equiv  K_1+K_2,
\end{equation*}
where
\begin{equation*}
\begin{split}
\|K_2\|_2&=\|D^a(\partial_x \ji x\jd^{\theta}_N\, \hil u)\|_2\le \|J^a(\partial_x \ji x\jd^{\theta}_N\, \hil u)\|_2\\
&\le \|[J^a; \partial_x \ji x\jd^{\theta}_N]\hil u\|_2+\|\partial_x \ji x\jd^{\theta}_N\, J^a\hil u\|_2\\
&\le \|u\|_2+\|J^au\|_2\le c\|J^a u\|_2.
\end{split}
\end{equation*}

So we consider $K_1$.
\begin{equation*}
K_1=D^a\partial_x(\ji x\jd^{\theta}_N\,\hil u)
=D^a\hil\partial_x(\ji x\jd^{\theta}_N u)+D^a \partial_x [\ji x\jd^{\theta}_N; \hil]u
= K_{1,1}+K_{1,2}.
\end{equation*}

For $K_{1,2}$ we write
\begin{equation*}
\begin{split}
\|K_{1,2}\|_2&=\|D^a\partial_x [\ji x\jd^{\theta}_N;\hil]u\|_2\\
&\le \|D\partial_x[\ji x\jd^{\theta}_N;\hil]u\|_2^a\|\partial_x[\ji x\jd^{\theta}_N;\hil]u\|^{1-a}_2\\
&\le \|\partial_x^2 \ji x\jd^{\theta}_N\|_{\infty}^a\|u\|_2^a\|\partial_x \ji x\jd^{\theta}_N\|_{\infty}^{1-a}
\|u\|_2^{1-a}\le c_{\theta}\|u\|_2.
\end{split}
\end{equation*}

Finally,
\begin{equation*}
\begin{split}
\|K_{1,1}\|_2&=\|D^{1+a}(\ji x\jd^{\theta}_N u)\|_2\le \|J^{1+a}(\ji x\jd^{\theta}_N u)\|_2\\
&\le \|J^{(1+a)(1+\theta)}u\|^{1/1+\theta}_2\|\ji x\jd^{1+\theta}_N u\|^{\theta/1+\theta}_2\\
&\le \|J^{2(1+a)}u\|+\|\ji x\jd^{\theta}_N \,x u\|_2+\|u\|_2+\|xu\|_2.
\end{split}
\end{equation*}
which completes the estimate for \eqref{e2.B3} if $s\ge 2(1+a)$.

For the nonlinear term coming from \eqref{e2.B2} we have that
\begin{equation*}
\begin{split}
\int \ji x\jd^{\theta}_N \,x\,u \partial_xu \ji x\jd^{\theta}_N \,xu
&\le \|\ji x\jd^{\theta}xu\|^2_2\|\partial_xu\|_{\infty}\le  \|\ji x\jd^{\theta}xu\|^2_2\|u\|_{2(1+a),2}.
\end{split}
\end{equation*}


This proves that if $u_0\in H^{2(1+a)}(\R)$ and $|x|^{1+\theta}\,u\in L^2(\R)$, $\theta\in (0,1)$,
then persistence holds in
\begin{equation*}
H^{2(1+a)}\cap L^2(|x|^{2(1+\theta)}\,dx).
\end{equation*}

\begin{remark}\label{AAA2} The above argument also shows that if in addition to
$u_0\in H^{2(1+a)}(\R)\cap L^2(|x|^{2(1+\theta)}\,dx)$ one has
$D^{\alpha} u_0\in H^{2(1+a)}(\R)\cap L^2(|x|^{2(1+\theta)}\,dx)$, $\alpha>0$,
then $D^{\alpha} u\in C([0,T]: H^{2(1+a)}(\R)\cap L^2(|x|^{2(1+\theta)}\,dx))$.
\end{remark}

\subsection{Case $s=2(1+a)$, $r=2$} We observe that an argument similar to that in \cite{CoKeSt} also gives the
persistence of the solution to the IVP \eqref{DGBO} in
\begin{equation}\label{B1}
u_0\in H^{2(1+a)}(\R)\cap L^2(|x|^{4}\,dx).
\end{equation}

In fact, using for
$$
\aligned
&x^2D^{1+a}\partial_x f
= (x^2 D^{1+a}\partial_x f)^{{\wedge}{\vee}}
=(-\partial_{\xi}^2(|\xi|^{1+a}i\xi \widehat{f}))^{\vee}\\
&\;\;=(-(2+a)(1+a)|\xi|^a\,i\sgn (\xi)\,\widehat{f}-2(2+a)|\xi|^{1+a}\widehat{x\,f}
+|\xi|^{1+a}i\xi \widehat{(x^2\,f)})^{\vee}\\
&\;\;=(2+a)(1+a) D^a\hil f - 2(2+a)D^{1+a}(xf)+D^{1+a}\partial_x(x^2f).
\endaligned
$$
We get the equation for $x^2u$
$$
\partial_t(x^2u)+D^{1+a}\partial_x(x^2 u)-2(2+a)D^{1+a}(xu)-(1+a)(2+a) D^a\hil u-x^2u\partial_x u=0,
$$
for which a familiar argument also shows that
\begin{equation}\label{B2}
\sup_{[0,T]} \|x^2 u(t)\|_2 \le c (T;a; \|x^2 u_0\|_2;\|u_0\|_{2(1+a),2}).
\end{equation}

As before we notice that if in addition to \eqref{B1} one has that
\begin{equation}\label{B3}
D^{\alpha}u_0\in H^{2(1+a)}(\R)\cap L^2(|x|^{4}\,dx)\equiv Z_{2(1+a),2},\;\;\alpha>0,
\end{equation}
then
\begin{equation}\label{B4}
\sup_{[0,T]} \|x^2 D^{\alpha}u(t)\|_2\le c(T;a;\|x^2u_0\|_2;\|x^2D^{\alpha}u_0\|_2;\|u_0\|_{2(1+a)+\alpha,2}).
\end{equation}

\subsection{Case $s=s_r\equiv[(r+1)^-](1+a)$, $\,r\in (2,5/2+a)$} We observe that the equation for $xu$
\begin{equation}\label{B6}
\partial_t(xu)+D^{1+a}\partial_x(xu)-(2+a)D^{1+a}u+(xu)\partial_xu=0,
\end{equation}
and the previous argument for $|x|^l$, with $l\in(0,2)$ will provide the result if the
contribution for the extra term in \eqref{B6} $c_a\,D^{1+a}u$ can be handled, i.e. if
for $l=1+\theta$
\begin{equation}\label{B7}
\sup_{[0,T]} \||x|^{1+\theta}D^{1+a}u(t)\|_2\le M.
\end{equation}

We claim that if $\theta\in (0,a+1/2)$, $u_0\in H^{s_r}(\R)\cap L^2(|x|^{2+\theta})$,
$s=r[(r+1)^-](1+a)$, we obtain \eqref{B7} and hence the desired result. From Remark \ref{AAA} it will suffice to have
\begin{equation}\label{B8}
\||x|^{1+\theta} D^{1+a} u_0\|_2\le c(\||x|^{2+\theta}u_0\|_2;\|D^{(1+a)(1+\theta)}u_0\|_2).
\end{equation}
with $c$ independent of $N$.

\begin{proof}[Proof of \eqref{B8}] Using the identity
\begin{equation}\label{B9}
x\,D^{1+a}f=D^{1+a}(xf)+(1+a)D^a\hil f,
\end{equation}
we have to control the $L^2$-norms of the terms
\begin{equation}\label{B10}
K_1=|x|^{\theta} D^a\hil u_0\text{\hskip10pt and\hskip10pt} K_2=|x|^{\theta} D^{1+a}(xu_0).
\end{equation}

We can estimate $K_1$ as
\begin{equation}\label{B11}
\begin{split}
\|K_1\|_2\le& \|D^{\theta}_{\xi}(|\xi|^a\sgn(\xi)\chi(\xi)\widehat{u}_0(\xi))\|_2\\
&+\|D^{\theta}_{\xi}(|\xi|^a\sgn(\xi)(1-\chi(\xi))\widehat{u}_0(\xi))\|_2
=K_{1,1}+K_{1,2}.
\end{split}
\end{equation}
where
\begin{equation*}
\begin{split}
K_{1,1} &\le \|D^{\theta}_{\xi}(|\xi|^a\sgn(\xi)\chi(\xi))\widehat{u}_0(0))\|_2
+\|D^{\theta}_{\xi}(\underset{L(\xi)}{\underbrace{|\xi|^a\sgn(\xi)\chi(\xi)(\widehat{u}_0(\xi)-\widehat{u}_0(0)}}))\|_2\\
&=\tilde{K}_{1,1}+\|D^{\theta}_{\xi} L\|_2.
\end{split}
\end{equation*}

Next we have that
\begin{equation*}
\|D^{\theta}_{\xi} L\|_2\le \|L(\xi)\|^{1-\theta}_2\|\partial_{\xi}L(\xi)\|^{\theta}_2.
\end{equation*}

Then
\begin{equation*}
\begin{split}
\|\partial_{\xi}L\|&\le c\big(\|\partial_{\xi}\widehat{u}_0\|_{\infty}+\|D^{\theta}_{\xi}\widehat{u}_0\|_2\big)\\
&\le c\big(\|\widehat{xu_0}\|_{\infty}+\|\widehat{|x|^{\theta}u_0}\|_2\big)\\
&\le c\big(\|xu_0\|_{1}+\|\ji x\jd^{\theta}u_0\|_2\big)\le c\|\ji x\jd^{\frac{3}{2}^+} u_0\|_2,
\end{split}
\end{equation*}
and
\begin{equation*}
\tilde{K}_{1,1}< \infty.
\end{equation*}
(by Proposition \ref{prop3}).
\end{proof}

Consider now $\|K_2\|_2$, we introduce the cutoff function $\chi$ to obtain

\begin{equation*}
\begin{split}
\|K_2\|_2&=\||x|^{\theta}D^{1+a}(xu_0)\|_2\\
&=\|D_{\xi}^{\theta}(|\xi|^{1+a}(\widehat{xu_0}))\|_2\\
&\le \|D_{\xi}^{\theta}(|\xi|^{1+a}\chi(\xi)(\widehat{xu_0}))\|_2+\|D_{\xi}^{\theta}(|\xi|^{1+a}(1-\chi(\xi))(\widehat{xu_0}))\|_2\\
&\le K_{2,1}+K_{2,2}.
\end{split}
\end{equation*}

Using Stein's derivative, Leibniz rule \eqref{pointwise2} and Proposition \ref{prop3}, we estimate $K_{2,1}$ as

\begin{equation*}
\begin{split}
K_{2,1} &=\|D_{\xi}^{\theta}(|\xi|^{1+a}\chi(\xi)(\widehat{xu_0}))\|_2\\
&\le c \|\mathcal D_{\xi}^{\theta}(|\xi|^{1+a}\chi(\xi))\|_{\infty} \|\widehat{xu_0}\|_2+\||\xi|^{1+a}\chi(\xi)\|_{\infty}\|\mathcal D_{\xi}^{\theta}(\widehat{xu_0})\|_2\\
&\le c_a\|xu_0\|_2+c_a\|D_{\xi}^{\theta}(\widehat{xu_0})\|_2\\
&\le c_a \|\ji x\jd^{1+\theta}u_0\|_2.
\end{split}
\end{equation*}

On the other hand, we notice that $\varphi_1(\xi)=\frac{|\xi|^{1+a}(1-\chi(\xi))}{\ji \xi\jd^{1+a}}$ and $\varphi_2(\xi)=\frac{\xi}{\ji \xi\jd}$ satisfy the hypothesis in Proposition \ref{prop**}, and hence it follows that

\begin{equation*}
\begin{split}
K_{2,2}&=\|D_{\xi}^{\theta}(|\xi|^{1+a}(1-\chi(\xi))(\widehat{xu_0}))\|_2\\
&\le \|J^{\theta}_{\xi}\Big(\frac{|\xi|^{1+a}(1-\chi(\xi))}{\ji \xi\jd^{1+a}}\ji\xi\jd^{1+a}\widehat{xu_0}\Big)\|_2\\
&\le c\|J^{\theta}_{\xi}(\ji\xi\jd^{1+a}\partial_{\xi}\widehat{u}_0)\|_2\\
&\le c\|J^{\theta}_{\xi}\partial_{\xi}(\ji\xi\jd^{1+a}\widehat{u}_0)\|_2+\|J^{\theta}_{\xi}(\partial_{\xi}(\ji\xi\jd^{1+a})\widehat{u}_0)\|_2\\
&\le c\,\|J^{1+\theta}_{\xi}(\ji\xi\jd^{1+a}\widehat{u}_0)\|_2+\|J^{\theta}_{\xi}(\frac{\xi}{\ji\xi\jd}\ji\xi\jd^{a}\widehat{u}_0)\|_2\\
&\le c\,\|\ji x\jd^{1+\theta}J^{1+a}u_0\|_2+\|\ji x\jd^{\theta}J^a u_0\|_2\\
&\le c_a\|\ji x\jd^{2+\theta}u_0\|^{1-1/(2+\theta)}_2
\|J^{(2+\theta)(1+a)}u_0\|_2^{1/(2+\theta)}\\
&\;\;\;\;+ c_a\|\ji x\jd^{2+\theta}u_0\|^{\theta/(2+\theta)}_2
\|J^{(2+\theta)\frac{a}{2}}u_0\|_2^{2/(2+\theta)},
\end{split}
\end{equation*}
where in the last inequality we have applied  complex interpolation ($u_0\in H^{s_r}(\R)\cap L^2(|x|^{2r}\,dx)$ with $r=2+\theta\in(2,5/2+a)$).





\subsection{Persistence property in $\dot{Z}_{s_r,r}$ with $s_r=[(r+1)^-](1+a)$ and $r\in[5/2+a,7/2+a)$} To simplify the
exposition we assume $r\in[3,7/2+a)$.

We have established persistence in
\begin{equation}\label{D2}
{Z}_{s_r,r} \hskip10pt \text{with}\hskip10pt r\in[2,5/2+a),
\end{equation}
for the equation (we are assuming $a\in (0,1/2)$ for simplicity)
\begin{equation}\label{D3}
\partial_t u+D^{1+a}\partial_x u+u\partial_x u=0,
\end{equation}
and that $xu$ satisfies the equation
\begin{equation}\label{D4}
\partial_t(xu)+D^{1+a}\partial_x(xu)-(2+a)D^{1+a}u+xu\partial_x u=0.
\end{equation}
Thus if we prove that $u_0\in \dot{Z}_{s_r,r}$ with $r\in[5/2+a,7/2+a)$, then the solution $u$ satisfies
\begin{equation}\label{D5}
|x|^{\alpha} D^{1+a}u\in L^2(\R)\hskip20pt\text{for}\hskip20pt\alpha=r-1\in[2,5/2+a),
\end{equation}
the argument for proving the result in $Z_{s_r,r}$ as in \eqref{D2} will provide the result.

Since $\widehat{u}(0,t)=\widehat{u}_0(0)=\int u_0(x)\,dx$ is preserved by the solution flow, it will suffice to show that
if $u_0\in \dot{Z}_{r(1+a),r}$, $r\in[5/2+a, 7/2+a)$, then $|x|^{\alpha}D^{1+a}u_0\in L^2(\R)$ for $\alpha=r-1$.

Since $\alpha\in[2,5/2+a)$ with $a\in(0,1/2)$ write $\alpha=2+\theta$, $\theta\in(0,1)$, and use that
\begin{equation}\label{D6}
x^2D^{1+a}f=-(1+a)aD^{a-1}f-2(1+a)D^a\hil(xf)+D^{1+a}(x^2f).
\end{equation}

Thus
\begin{equation}\label{D7}
\begin{split}
|x|^{2+\theta} D^{1+a}u_0=&\,-(1+a)a|x|^{\theta}D^{a-1}u_0-2(1+a)D^a\hil (xu_0)\\
&\;+|x|^{\theta}D^{1+a}(x^2u_0)\\
\equiv &\;G_1+G_2+G_3.
\end{split}
\end{equation}

First we write $G_1$ using that
\begin{equation*}
\begin{split}
\||x|^{\theta} D^{a-1}u_0\|_2&=\|D^{\theta}_{\xi}(|\xi|^{a-1}\widehat{u}_0)\|_2\\
&\le \|D^{\theta}_{\xi}(|\xi|^{a-1}\chi(\xi)\widehat{u}_0)\|_2+\|D^{\theta}_{\xi}(|\xi|^{a-1}(1-\chi(\xi))\widehat{u}_0)\|_2.
\end{split}
\end{equation*}
Now using that $\widehat{u}_0(0)=0$, the Taylor expansion allows to write
\begin{equation}
\widehat{u}_0(\xi)=\xi\partial_{\xi}\widehat{u}_0(0)+\int_0^{\xi}(\xi-\zeta)\partial_{\xi}^2\widehat{u_0}(\zeta)d\zeta.
\end{equation}

So
\begin{equation*}
\begin{split}
|\xi|^{a-1}\chi(\xi)\widehat{u}_0(\xi)&=|\xi|^a \sgn(\xi)\chi(\xi)\partial_{\xi}\widehat{u}_0(\xi)
+\chi(\xi)|\xi|^{a-1}\,\int_0^{\xi}(\xi-\zeta)\partial_{\xi}^2\widehat{u}(\zeta)d\zeta\\
&\equiv |\xi|^a \sgn(\xi)\chi(\xi)\partial_{\xi}\widehat{u}_0(\xi)+Q_1.
\end{split}
\end{equation*}
and by Proposition \ref{prop3}
\begin{equation*}
\begin{split}
D^{\theta}_{\xi}\big(|\xi|^a \sgn(\xi)\chi(\xi)\partial_{\xi}\widehat{u}_0(\xi)\big)\in L^2(\R)
\hskip10pt &\text{if and only if}\hskip10pt \theta<a+1/2\\
&\text{if and only if}\hskip10pt \alpha=2+\theta<5/2+a,
\end{split}
\end{equation*}
which holds from our hypotheses, and
\begin{equation*}
\|D^{\theta}_{\xi}Q_1\|_2\le \|Q_1\|_2^{1-\theta}\|\partial_{\xi}Q_1\|_2^{\theta}.
\end{equation*}

\begin{equation*}
\begin{split}
\|Q_1\|_2&\le \|\chi(\xi)|\xi|^{a+1}\|\partial_{\xi}^2\widehat{u}_0\|_{L^{\infty}_{\{|\xi|<1\}}}\|_2
\le \|\widehat{x^2u_0}\|_{\infty}\le c\|x^2u_0\|_{1}\\
&\le c\|\ji x\jd^{\frac{5}{2}^+}u_0\|_2,
\end{split}
\end{equation*}
and
\begin{equation*}
\begin{split}
\|\partial_{\xi}Q_1\|_2&\le \|\chi'(\xi)|\xi|^{a-1}\int_0^{\xi}(\xi-\zeta)\partial_{\xi}^2\widehat{u}_0(\zeta)\,d\zeta\|_2\\
&\;\;\;\;+\|\chi(\xi)(a-1)|\xi|^{a-2}\int_0^{\xi}(\xi-\zeta)\partial_{\xi}^2\widehat{u}_0(\zeta)\,d\zeta\|_2\\
&\;\;\;\;+\|\chi(\xi)|\xi|^{a-1}\int_0^{\xi}\partial_{\xi}^2\widehat{u}_0(\zeta)\,d\zeta\|_2\\
&\le \|\partial_{\xi}^2\widehat{u}_0\|_{\infty}+c_a\|\chi(\xi)|\xi|^a\|\partial_{\xi}^2\widehat{u}_0\|_{\infty}\|_2\\
&\le  c_a \|\partial_{\xi}^2\widehat{u}_0\|_{\infty}\le c\|\ji x\jd^{\frac{5}{2}^+}u_0\|_2.
\end{split}
\end{equation*}
This provides the bound of $G_1$ in \eqref{D7}.

For $G_2$ we write
\begin{equation*}
\begin{split}
\||x|^{\theta}&D^a\hil(xu_0)\|_2=\|D^{\theta}_{\xi}(|\xi|^a\sgn(\xi)(\widehat{xu_0}))\|_2\\
&\le \|D^{\theta}_{\xi}(|\xi|^a\sgn(\xi)\chi(\xi)(\widehat{xu_0}))\|_2
+\|D^{\theta}_{\xi}(|\xi|^a\sgn(\xi)(1-\chi(\xi))(\widehat{xu_0}))\|_2\\
&\le \|\widehat{xu_0}\|_{\infty}+\|\widehat{xu_0}\|_2+\|J^{\theta}_{\xi}(\ji\xi\jd^a\widehat{xu_0})\|_2\\
&\le \|xu_0\|_{1}+\|xu_0\|_2+\|J^{1+\theta}(\ji \xi\jd^a\widehat{u}_0)\|_2
+\|J^{\theta}(\ji \xi\jd^{a-1}\widehat{u}_0)\|_2\\
&\le \|\ji x\jd^{\frac{5}{2}^+}u_0\|_2+\|\ji x\jd^{1+\theta}J^{a}u_0\|_2+\|\ji x\jd^{\theta}u_0\|_2.
\end{split}
\end{equation*}
To complete the estimate we use that
\begin{equation*}
\|\ji x\jd^{1+\theta}J^{\theta}u_0\|_2\le \|\ji x\jd^3u_0\|_2^{(1+\theta)/3}\|J^mu_0\|_2^{(2-\theta)/3},
\end{equation*}
holds if $\;m\Big(\dfrac{2-\theta}{3}\Big)=a<r(1+a)\Big(\dfrac{2-\theta}{3}\Big)$. Since $r\ge3$ we just need $(1+a)(2-\theta)>a$, which holds since $\theta\in(0,1)$.
This finishes the bound for $G_2$.

Finally, we consider $G_3$ in \eqref{D7}.
\begin{equation*}
\begin{split}
\|&|x|^{\theta}D^{1+a}(x^2u_0)\|_2=\|D^{\theta}_{\xi}(|\xi|^{1+a}\sgn(\xi)\partial_{\xi}^2\widehat{u}_0)\|_2\\
&\le \|D^{\theta}_{\xi}(|\xi|^{1+a}\sgn(\xi)\chi(\xi)\partial_{\xi}^2\widehat{u}_0)\|_2
+\|D^{\theta}_{\xi}(|\xi|^{1+a}\sgn(\xi)(1-\chi(\xi))\partial_{\xi}^2\widehat{u}_0)\|_2\\
&= A_1+A_2.
\end{split}
\end{equation*}

The Leibniz rule and Stein derivatives give
\begin{equation*}
\begin{split}
A_1&\le \|D^{\theta}_{\xi}(|\xi|^{1+a}\sgn(\xi)\chi(\xi))\|_{\infty}\|\partial_{\xi}^2\widehat{u}_0\|_2
+\||\xi|^{1+a}\sgn(\xi)\chi(\xi)\|_{\infty}\|D^{\theta}_{\xi}\partial_{\xi}^2\widehat{u}_0\|_2\\
& \le \|\ji x\jd^{2+\theta}u_0\|_2,
\end{split}
\end{equation*}
and
\begin{equation*}
\begin{split}
A_2&\le \|D^{\theta}_{\xi}\big(\partial^2_{\xi}(|\xi|^{1+a}\sgn(\xi)(1-\chi(\xi))\widehat{u}_0)\big)\|_2\\
&\;\;\;+ \|D^{\theta}_{\xi}\big(\partial_{\xi}(|\xi|^{1+a}\sgn(\xi)(1-\chi(\xi))\partial_{\xi}\widehat{u}_0\big)\|_2\\
&\;\;\;+ \|D^{\theta}_{\xi}\big(\partial^2_{\xi}(|\xi|^{1+a}\sgn(\xi)(1-\chi(\xi))\widehat{u}_0\big)\|_2\\
&\equiv K_1+K_2+K_3.
\end{split}
\end{equation*}

Notice that
\begin{equation*}
\partial_{\xi}(|\xi|^{1+a}\sgn(\xi)(1-\chi(\xi)))\!=\!(1+a)|\xi|^a\sgn(\xi)(1-\chi(\xi))+|\xi|^{1+a}\sgn(\xi)\chi'(\xi),
\end{equation*}
and
\begin{equation*}
\begin{split}
\partial_{\xi}^2(|\xi|^{1+a}\sgn(\xi)(1-\chi(\xi)))=&\,c\,|\xi|^{a-1}\sgn(\xi)(1-\chi(\xi))\\
&+c|\xi|^a\sgn(\xi)\chi'(\xi)+|\xi|^{1+a}\sgn(\xi)\chi''(\xi).
\end{split}
\end{equation*}

So to bound $K_3$ we just need to consider
\begin{equation*}
\begin{split}
\|D^{\theta}_{\xi}(|\xi|^{a-1}\sgn(\xi)(1-\chi(\xi))\widehat{u}_0\|_2
&\le \|D^{\theta}_{\xi}(|\xi|^{a-1}\sgn(\xi)(1-\chi(\xi)))\|_{\infty}\|\widehat{u}_0\|_2\\
&\;\;\;+\| |\xi|^{a-1}\sgn(\xi)(1-\chi(\xi))\|_{\infty}\|D^{\theta}_{\xi}\widehat{u}_0\|_2\\
&\le \|u_0\|_2+\| |x|^{\theta}u_0\|_2.
\end{split}
\end{equation*}

To bound $K_2$ we just need to consider
\begin{equation*}
\begin{split}
\|D^{\theta}_{\xi}(|\xi|^a\sgn(\xi)(1-\chi(\xi))\partial_{\xi}\widehat{u}_0\|_2
&\le \|D^{\theta}_{\xi}\Big(\dfrac{|\xi|^a\sgn(\xi)(1-\chi(\xi))}{\ji \xi\jd^a}\ji \xi\jd^a\partial_{\xi}\widehat{u}_0\Big)\|_2\\
&\le  \|J^{\theta}_{\xi}\Big(\dfrac{|\xi|^a\sgn(\xi)(1-\chi(\xi))}{\ji \xi\jd^a}\ji \xi\jd^a\partial_{\xi}\widehat{u}_0\Big)\|_2
\end{split}
\end{equation*}

Now notice that
\begin{equation*}
\Phi(\xi)=\dfrac{|\xi|^a\sgn(\xi)(1-\chi(\xi))}{\ji \xi\jd^a}\in L^{\infty},\;\;
\partial_{\xi}\Phi \cong \frac{1}{\ji x\jd},\;\partial_{\xi}^2\Phi \cong \frac{1}{\ji
x\jd^2} \in L^2.
\end{equation*}

Thus Proposition \ref{prop**}  and interpolation (Lemma \ref{lemma1})  yield
\begin{equation*}
\begin{split}
 \|J^{\theta}_{\xi}\Big(\dfrac{|\xi|^a\sgn(\xi)(1-\chi(\xi))}{\ji \xi\jd^a}&\ji \xi\jd^a\partial_{\xi}\widehat{u}_0\Big)\|_2
 \le \|J^{\theta}_{\xi}\big(\ji \xi\jd^a\partial_{\xi}\widehat{u}_0\big)\|_2\\
 &\le \|\partial_{\xi}J^{\theta}_{\xi}\big(\ji \xi\jd^a\widehat{u}_0\big)\|_2+ \|J^{\theta}_{\xi}\big(\ji \xi\jd^{a-1}\widehat{u}_0\big)\|_2\\
 &\le \|J^{1+\theta}_{\xi}\big(\ji \xi\jd^a\widehat{u}_0\big)\|_2+\|J^{\theta}_{\xi}\widehat{u}_0\|_2\\
 &\le \|\ji x\jd^{1+\theta}J^au_0\|_2+\| |x|^{\theta}u_0\|_2.
 \end{split}
 \end{equation*}

 Finally to bound $K_3$ we just need to consider
 \begin{equation*}
 \begin{split}
\|D^{\theta}_{\xi}\big(|\xi|^{a-1}\sgn(\xi)(1-\chi(\xi))\widehat{u}_0\big)\|_2&\le \|J^{\theta}_{\xi}(|\xi|^{a-1}\sgn(\xi)(1-\chi(\xi))\widehat{u}_0\big)\|_2\\
&\le   \|J^{\theta}_{\xi}\widehat{u}_0\|_2\le \|\ji x\jd^{\theta}u_0\|_2,
\end{split}
\end{equation*}
where we have used Proposition \ref{prop**}.


\begin{section}{Proof of Theorem \ref{theorem7}}

Without loss of generality we assume that $t_1=0<t_2$.

We consider the case $0<a<1/2$ and hence $\frac{5}{2}+a=2+\alpha<3.$

 From the hypothesis we have that
 $u\in C([-T,T]:Z_{(1+a)(\frac{5}{2}+a)+\frac{1-a}{2},\frac{5}{2}+a-\epsilon}),$
 for some $0<\epsilon<\!\!1.$ Therefore
 $$
 u\partial_x u\in C([-T,T]:Z_{(1+a)(\frac{5}{2}+a)-\frac{1+a}{2},4+2a-2\epsilon}),
 $$
 and
 $$
 u\partial_x u\in L^1([-T,T]:H^{s_0}(\R)),\;\;\;\;\text{with}\;\;\;\;s_0\in (0,(1+a)(\frac{5}{2}+a)).
 $$

The solution to the IVP \eqref{DGBO} can be represented by Duhamel's formula
\begin{equation}
  u(t)=W_a(t)u_0-\int_0^t W_a(t-t')u(t')\partial_x u(t') dt',
\label{duh}
\end{equation}
or equivalently in Fourier space as
\begin{equation}
\widehat{u}(\xi,t)=e^{-it|\xi|^{1+a}\xi}\widehat{u}_0(\xi)-\frac{i}{2}\int_0^t e^{-i(t-t')|\xi|^{1+a}\xi}\xi\widehat{u^2}(\xi, t') dt'.
\label{duh1}
\end{equation}

With the notation introduced in \eqref{notation2}, we have
\begin{equation}
\begin{aligned}
\partial_{\xi}^2\widehat{u}(\xi,t)&=F_2(t,\xi,\widehat{u}_0)-\frac{i}{2}\int_0^tF_2(t-t',\xi,\xi \widehat{u^2}(\xi,t'))\\
&=\sum_{1}^{4}B_j(t,\xi,\widehat{u}_0)-\frac{i}{2}\int_0^t\sum_{1}^{4}B_j(t-t',\xi,\xi \widehat{u^2}\,) dt'.
\end{aligned}
\end{equation}

We notice that for any $t\in\R$, and any $j=2,3,4$, we have

\begin{claim1}
Let  $\alpha=\frac{1}{2}+a\in (\frac{1}{2},1)$ and $j=2,3,4$. Then
\begin{equation}
B_j(t,\xi,\widehat{u}_0)-\frac{i}{2}\int_0^tB_j(t-t',\xi,\xi \widehat{u^2}) dt'\in H^{\alpha}(\R),
\label{claim1}
\end{equation}
for all $t\in\R.$
\end{claim1}

 If we assume \eqref{claim1}, it follows that

 \begin{equation}
 \begin{aligned}
 \partial_{\xi}^2\widehat{u}(\xi,t)&\in H^{\alpha}(\R) \,\,\,\text {if and only if}\\
B_1(t,\xi,\widehat{u}_0)-&\frac{i}{2}\int_0^tB_1(t-t',\xi,\xi \widehat{u^2}\,) \,dt'\in H^{\alpha}(\R).
 \end{aligned}
\label{cond1}
 \end{equation}
 We split now $B_1$ as
 \begin{equation}
 \begin{aligned}
 B_1(t,\xi,\widehat{u}_0)&=c_1 t|\xi|^a\text{sgn}(\xi)e^{-it|\xi|^{1+a}\xi}\widehat{u}_0(\xi)\\
 &=c_1 t|\xi|^a\text{sgn}(\xi)e^{-it|\xi|^{1+a}\xi}\widehat{u}_0(\xi)(\chi(\xi)+(1-\chi(\xi)))\\
 &=B_{1,1}+B_{1,2},
 \end{aligned}
 \end{equation}
where $c_1=-i(2+a)(1+a).$ From the hypothesis, it is easy to check that for any $t\in \R-\{0\},$ $B_{1,2}\in H^{1}(\R)$.

 Next, we consider $B_{1,1}$
 $$
  \aligned
 B_{1,1}&=c_1 t|\xi|^a\text{sgn}(\xi)\chi (\xi) (e^{-it|\xi|^{1+a}\xi}-1)\widehat{u}_0(\xi)
 +c_1 t|\xi|^a\text{sgn}(\xi)\chi(\xi)\widehat{u}_0(\xi)\\
 &=\tilde{B}_{1,1}^1+\tilde{B}_{1,1}^2.
 \endaligned
$$
 Once again, we can easily check that for any $t\in \R-\{0\}$, $\tilde{B}_{1,1}^1\in H^{1}(\R). $

 We rewrite  $\tilde{B}_{1,1}$ as:
 \begin{equation}
 \begin{aligned}
 \tilde{B}_{1,1}^2=c_1 t|\xi|^a\text{sgn}(\xi)\chi (\xi)(\widehat{u}_0(\xi)-\widehat{u}_0(0))+c_1 t|\xi|^a\text{sgn}(\xi)\chi (\xi)\widehat{u}_0(0)
  \end{aligned}
 \end{equation}
 and notice that for any $t\in \R-\{0\},$ $c_1 t|\xi|^a\text{sgn}(\xi)\chi(\xi)(\widehat{u}_0(\xi)-\widehat{u}_0(0))\in H^{1}(\R).$

 Now, we apply the  above argument for the inhomogeneous term
 $$\int_0^tB_1(t-t',\xi,\xi \widehat{u^2}) dt',$$
 to conclude that
 \begin{equation}
 \begin{aligned}
\big(&B_1(t,\xi,\widehat{u}_0)-\frac{i}{2}\int_0^tB_1(t-t',\xi,\xi \widehat{u^2}) dt'\big)
-c_1\big( t|\xi|^a\text{sgn}(\xi)\chi(\xi)\widehat{u}_0(0)\\
&-\frac{i}{2}\int_0^t(t-t')|\xi|^a\text{sgn}(\xi)\chi(\xi)\xi\,\widehat{u^2}(0,t')\big)
\in H^{1}(\R)
\label{cond2}
\end{aligned}
 \end{equation}
 and therefore from \eqref{cond1} and \eqref{cond2} we have that for any $t\in \R-\{0\}$
\begin{equation*}\label{cond3}
 \begin{aligned}
 &\partial_{\xi}^2\widehat{u}(\xi,t)\in H^{\alpha}(\R) \,\,\,\,\,\text {if and only if}\\
 t|\xi|^a\text{sgn}(\xi)\chi(\xi)\widehat{u}_0(0)&-\frac{i}{2}\int_0^t(t-t')|\xi|^a\text{sgn}(\xi)\chi(\xi)\xi\,\widehat{u^2}(0,t')
\in H^{\alpha}(\R).
\end{aligned}
 \end{equation*}

Finally, we observe that  for any $t\in \R-\{0\}$
$$
\frac{i}{2}\int_0^t(t-t')|\xi|^a\text{sgn}(\xi)\chi(\xi)\xi\,\widehat{u^2}(0,t')
\in H^{1}(\R),
$$
and hence
 \begin{equation}\label{cond4}
 \partial_{\xi}^2\widehat{u}(\xi,t)\in H^{\alpha}(\R) \;\;\;\text {if and only if}\;\;\;
t|\xi|^a\text{sgn}(\xi)\chi(\xi)\widehat{u}_0(0)\in H^{\alpha}(\R)
\end{equation}
and since $\alpha=\frac{1}{2}+a$, it follows from Proposition \ref{prop3}, that
\eqref{cond4} holds at $t=t_2$ if and only if
$\widehat{u}_0(0)=0.$

In order to complete the proof we go back to Claim 1.
\vskip.5mm

{\it Proof of Claim 1}: We will only give the details in the case
$j=2$. We have
\begin{equation}
B_2(t,\xi,\widehat{u}_0)=
c_2e^{-it|\xi|^{1+a}\xi}t^2|\xi|^{2(a+1)}\widehat{u}_0(\xi)
\end{equation}
and therefore

\begin{equation}
\|B_2(t,\cdot,\widehat{u}_0)\|_2\leq c_t\|D^{2(a+1)}u_0\|_2\leq c_t\|u_0\|_{2(a+1),2}
\end{equation}
and from Proposition \ref{propositionB}, Proposition \ref{prop**} and Lemma \ref{lemma1}
\begin{equation}
\begin{aligned}
\|&D^{\alpha}B_2(t,\cdot,\widehat{u}_0)\|_2\\
&\leq  c_t(\|u_0\|_{2(a+1),2}+\||\xi|^{(2+\alpha)(2+a)}\widehat{u}_0\|_2
+ \|\mathcal D_\xi ^{\alpha}(|\xi|^{2(a+1)}\widehat{u}_0)\|_2)\\
&\leq c_t(\|u_0\|_{(2+\alpha)(a+1),2}+\|\mathcal D_\xi ^{\alpha}(\frac{|\xi|^{2(a+1)}}{\langle \xi\rangle ^{2(1+a)}}{\langle \xi\rangle}^{2(1+a)} \widehat{u} _0)\|_2)\\
&\leq c_t(\|u_0\|_{(5/2+a)(a+1),2}+\|J_\xi ^\alpha (\langle \xi\rangle ^{2(1+a)}\widehat{u} _0)\|_2)\\
&\leq c_t(\|u_0\|_{(5/2+a)(a+1),2}+\|\langle x\rangle^\alpha J^{2(1+a)}{u} _0\|_2)\\
&\leq c_t(\|u_0\|_{(5/2+a)(a+1),2}+\|\langle x\rangle^{5/2+a}u_0\|^{\frac{\alpha}{2+\alpha}}_2\|J^{(5/2+a)(a+1)}u_0\|^{\frac{2}{2+\alpha}}_2)
\end{aligned}
\end{equation}
which are all finite since $u_0\in Z_{(5/2+a)(a+1),5/2+a}$.

\end {section}


\begin{section}{Proof of Theorem \ref{theorem8}}
As it was remarked, we carry out the details for $a\in[1/2,1)$.
Thus, $7/2+a=4+\alpha$ with $\alpha=a-1/2 \in[0,1/2).$

Again, we shall use the integral equation associated to the IVP
\eqref{DGBO}
$$ u(t)=W_a(t)u_0-\int_0^tW_a(t-t')u(t')\partial_xu(t')dt',$$
 which in Fourier space reads as
 \begin{equation}
\widehat{u}(\xi,t)=e^{-it|\xi|^{1+a}\xi}\widehat{u}_0(\xi)-\frac{i}{2}\int_0^t e^{-i(t-t')|\xi|^{1+a}\xi}\xi\widehat{u^2}(\xi, t') dt'.
\label{duh2}
\end{equation}

With the notation introduced in \eqref{notation2}, we have that

\begin{equation}
\begin{aligned}
\partial_{\xi}^4\widehat{u}(\xi,t)&=F_4(t,\xi,\widehat{u}_0)-\frac{i}{2}\int_0^tF_4(t-t',\xi,\xi \widehat{u^2}(\xi,t'))\\
&=\sum_{1}^{11}E_j(t,\xi,\widehat{u}_0)-\frac{i}{2}\int_0^t\sum_{1}^{11}E_j(t-t',\xi,\xi \widehat{u^2}) dt'.
\end{aligned}
\label{der4duh}
\end{equation}

By hypothesis we have that
\begin{equation}
u\in C([-T,T]:\dot{Z}_{(1+a)(\frac{7}{2}+a)+\frac{1-a}{2},\frac{7}{2}+a-\epsilon}),\,\, \text{for some}\,\, 0<\epsilon<\!\!1.
\label{t8hyp}
\end{equation}

Therefore
\begin{equation}
u\partial_x u\in C([-T,T]:Z_{(1+a)(\frac{7}{2}+a)-\frac{1+a}{2},6+2a-2\epsilon}),
\label{t8nl1}
\end{equation}
and by using  Proposition \ref{prop3a}
\begin{equation} \label{t8smooth}
u\partial_x u\in L^1([-T,T]:H^{s_0}({\R})),\;\;\;\;\;s_0\in (0,(1+a)(7/2+a)).
\end{equation}

 In Fourier space these last two properties are

 \begin{equation}
 \widehat{u}\in C([-T,T]:{Z}_{\frac{7}{2}+a-\epsilon,(1+a)(\frac{7}{2}+a)+\frac{1-a}{2}}),
 \end{equation}
and
  \begin{equation}
  \xi\, \widehat{u}\ast\widehat{u}\in C([-T,T]:{Z}_{6+2a-2\epsilon,(1+a)(\frac{7}{2}+a)-\frac{1+a}{2}}).
\end{equation}
Also for $j=1,2,3$ one has that
\begin{equation}
u(\cdot,t_j)\in \dot{Z}_{(7/2+a)(1+a),7/2+a}\;\;\;\;\text{and so}\;\;\;\;\widehat{u}(\cdot,t_j)\in \dot{Z}_{7/2+a,(7/2+a)(1+a)}.
\label{t8hyp2}
\end{equation}
 We observe that from the equation in \eqref{DGBO} it follows that
\begin{equation}
 \frac{d\;}{dt}\int_{-\infty}^{\infty} x u(x,t) dx = \frac{1}{2}\,\|u(t)\|_2^2=\frac{1}{2}\,\|u_0\|_2^2,
 \label{moment1}
 \end{equation}
  and hence
 \begin{equation}
 \int_{-\infty}^{\infty} x u(x,t) dx =\int_{-\infty}^{\infty} x u_0(x) dx +\frac{t}{2}\,\|u_0\|_2^2
 \label{moment2}
 \end{equation}
so the first momentum of a non-null solution is a strictly increasing function of $t.$ If we prove
that there exist $\tilde{t}_1\in(t_1,t_2)$ and $\tilde{t}_2\in(t_2,t_3)$ such that for $j=1,2$
 \begin{equation}
 \int_{-\infty}^{\infty} x u(x,\tilde{t}_j) dx=0,
 \end{equation}
it will follow that $\|u_0\|_2=0,$ and therefore $u\equiv 0.$

So we just need to show that using the hypothesis in \eqref{t8hyp2} and \eqref{moment1} for $j=1,2$
there exists $\tilde{t}_1\in(t_1,t_2)$ such that
\begin{equation}
 \int_{-\infty}^{\infty} x u(x,\tilde{t}_1) dx=0.
 \end{equation}

Without loss of generality we assume that $t_1=0<t_2<t_3.$ Then, going back to equation \eqref{der4duh}
we observe that

\begin{equation}
 \begin{aligned}
 E_1=E_1(t,\xi,\widehat{u}_0)&=c_1 t|\xi|^{a-2}\text{sgn}(\xi)e^{-it|\xi|^{1+a}\xi}\widehat{u}_0(\xi)\\
 &=c_1 t|\xi|^{a-2}\text{sgn}(\xi)e^{-it|\xi|^{1+a}\xi}\widehat{u}_0(\xi)(\chi(\xi)+1-\chi(\xi))\\
 &=E_{1,1}+E_{1,2},
 \end{aligned}
 \label{t81}
 \end{equation}
with $E_{1,2}\in H^1(\R)$ for any $t\in\R$. On the other hand, by using Taylor's formula and the fact that $\widehat{u}_0(0)=0$, we obtain
\begin{equation}
\widehat{u}_0(\xi)=\xi\partial_{\xi}\widehat{u}_0(0)
+\int_0^{\xi}(\xi-\theta)\partial_{\xi} ^2\widehat{u}(\theta)d\theta
\equiv \xi\partial_{\xi}\widehat{u}_0(0)+R_2(\xi).
\end{equation}

Therefore we can write $E_{1,1}$ as
\begin{equation}
 \begin{aligned}
 E_{1,1}(t,\xi,\widehat{u}_0)&=c_1 t|\xi|^{a-2}\text{sgn}(\xi)e^{-it|\xi|^{1+a}\xi}\chi(\xi)\big(\xi\partial_{\xi}\widehat{u}_0(0)+R_2(\xi)\big)\\
 &=c_1 t|\xi|^{a-1}e^{-it|\xi|^{1+a}\xi}\chi(\xi)\partial_{\xi}\widehat{u}_0(0)+\tilde{R}_2(\xi,t).
 \end{aligned}
 \label{t82}
 \end{equation}

Let us see that for any $t\in\R$, $\tilde{R}_2(\xi,t)\in H^1(\R).$ Thus
$$
\aligned
\|\tilde{R}_2(\cdot,t)\|_2&\leq c_t\||\xi|^{a-2}\chi(\xi)|\xi|^2\|\partial_\xi ^2\widehat{u}_0\|_\infty\|_2\\
&\leq c_t \|\widehat{x^2u_0}\|_\infty\;\leq c_t \|\ji x\jd^{\frac{5}{2}^+}u_0\|_2.
 \endaligned
 $$
Since $a\in (1/2,1)$ (so $\,a-1>-1/2$)
\begin{equation}
\begin{aligned}
\|\partial_\xi \tilde{R}_2(.,t)\|_2&\leq c_t(\||\xi|^{a-3}\chi(\xi)|\xi|^2\|\partial_\xi ^2\widehat{u}_0\|_\infty\|_2\\
&\;\;+\||\xi|^{a-2}\delta(0) \chi(\xi)|\xi|^2\|\partial_\xi^2\widehat{u}_0\|_\infty\|_2\\
&\;\;+\||\xi|^{2a+1}\chi(\xi)|\|\partial_\xi^2\widehat{u}_0\|_\infty\|_2\\
&\;\;+\||\xi|^{a-2}\chi(\xi)|\xi|\|\partial_\xi ^2\widehat{u}_0\|_\infty\|_2)\\
&\leq c_t \|\langle x\rangle^{\frac{5}{2}^+}u_0\|_{2}.
\end{aligned}
\label{t83}
\end{equation}
Next we observe that
 \begin{equation}
\begin{aligned}
  &t|\xi|^{a-1}\text{sgn}(\xi)e^{-it|\xi|^{1+a}\xi}\chi(\xi)\partial_{\xi}\widehat{u}_0(0)\\
 &=t|\xi|^{a-1}\text{sgn}(\xi)\chi(\xi)\partial_{\xi}\widehat{u}_0(0)\big(1+(e^{-it|\xi|^{1+a}\xi}-1)\big)\\
 &=t|\xi|^{a-1}\text{sgn}(\xi)\chi(\xi)\partial_{\xi}\widehat{u}_0(0)+Q_2(t,\xi)
\label{t84}
\end{aligned}
\end{equation}
with
\begin{equation}
Q_2(t,\cdot)\in H^1(\R),
\label{t85}
\end{equation}
for all $t\in\R$, which follows by combining  the estimate
$|e^{i\tau}-1|\leq|\tau| $ and the fact that $ a\in (1/2,1).$

Gathering the information from \eqref{t81} to \eqref{t85} we can conclude
\begin{equation}
E_1-c_1t|\xi|^{a-1}\text{sgn}(\xi)\chi(\xi)\partial_{\xi}\widehat{u}_0(0) \in H^1(\R),
\label{t86}
\end{equation}
for all $t\in\R$, with $c_1=-(2+a)(1+a)a(a-1)i$.

Combining the  above argument and \eqref{t8nl1} we also conclude that
$$
\int_0^tE_1(t-t',\xi,\xi \widehat{u^2}(\xi,t'))dt'-c_1\int_0^t(t-t')|\xi|^{a-1}\chi(\xi) \partial_\xi(\xi\widehat{u^2})(0,t'))dt'\in H^1(\R)
$$
for all $t\in\R.$

Now we shall rewrite the terms $E_5$'s in \eqref{notation2} as
\begin{equation}\label{t87}
 \begin{aligned}
 E_5=E_5(t,\xi,&\widehat{u}_0)=c_5 t|\xi|^{a-1}\text{sgn}(\xi)e^{-it|\xi|^{1+a}\xi}\partial_\xi\widehat{u}_0(\xi)\\
 &=c_1 t|\xi|^{a-1}\text{sgn}(\xi)e^{-it|\xi|^{1+a}\xi}\partial_\xi\widehat{u}_0(\xi)(\chi(\xi)+1-\chi(\xi))\\
 &=E_{5,1}+E_{5,2}.
 \end{aligned}
 \end{equation}
with $E_{5,2}\in H^1(\R)$ for any $t\in\R$. In fact
\begin{equation}
 \begin{aligned}
 \|E_{5,2}\|_{1,2}&\leq c_t(\|D^{2a}(xu_0)\|_2+\|\langle x\rangle ^2 u_0\|_{2})\\
 &\leq c_t(\|xu_0\|_{2,2}+\|\langle x\rangle ^2 u_0\|_{2})\\
 &\leq c_t(\|\partial_x u_0\|_2+\|\langle x\rangle \partial_x^2 u_0\|_2+\|\langle x\rangle ^2 u_0\|_{2}).
  \end{aligned}
 \label{t851}
 \end{equation}

Also
\begin{equation}
 \begin{aligned}
E_{5,1}=&\, c_5 t|\xi|^{a-1}\text{sgn}(\xi)e^{-it|\xi|^{1+a}\xi}\chi(\xi)\partial_\xi\widehat{u}_0(0)\\
&+c_5 t|\xi|^{a-1}\text{sgn}(\xi)e^{-it|\xi|^{1+a}\xi}\chi(\xi)(\partial_\xi\widehat{u}_0(\xi)-\partial_\xi\widehat{u}_0(0)).
  \end{aligned}
 \end{equation}
An argument similar to that one in \eqref{t82}-\eqref{t83} shows that
\begin{equation}
c_5 t|\xi|^{a-1}\text{sgn}(\xi)e^{-it|\xi|^{1+a}\xi}\chi(\xi)(\partial_\xi\widehat{u}_0(\xi)-\partial_\xi\widehat{u}_0(0))\in H^1(\R),
\end{equation}
for all $t\in \R$. Now we consider
\begin{equation}\label{t852}
\begin{split}
c_5 t|\xi|^{a-1}\text{sgn}(\xi)&e^{-it|\xi|^{1+a}\xi}\chi(\xi)\partial_\xi\widehat{u}_0(0)
= c_5\,t|\xi|^{a-1}\text{sgn}(\xi)\chi(\xi)\partial_\xi\widehat{u}_0(0)\\
&+c_5\,t|\xi|^{a-1}\text{sgn}(\xi)(e^{-it|\xi|^{1+a}\xi}-1)\chi(\xi)\partial_\xi\widehat{u}_0(0).
\end{split}
\end{equation}
The argument in \eqref{t84} and \eqref{t85} show that
\begin{equation}
t|\xi|^{a-1}\text{sgn}(\xi)(e^{-it|\xi|^{1+a}\xi}-1)\chi(\xi)\partial_\xi\widehat{u}_0(0)\in H^1(\R),
\end{equation}
for all $t\in\R$. Hence gathering the information from  \eqref{t87} to \eqref{t852}
\begin{equation}
E_5(t,\xi,\widehat{u}_0)-c_5 t|\xi|^{a-1}\text{sgn}(\xi)\chi(\xi)\partial_\xi\widehat{u}_0(0)\in H^1(\R)
\end{equation}
for all $t\in\R$, with $c_5=-4i(2+a)(1+a)a$.

The  above argument and \eqref{t8nl1} show that
$$
\int_0^tE_5(t-t',\xi,\xi \widehat{u^2}(\xi,t'))-c_5\int_0^t(t-t')|\xi|^{a-1}\chi(\xi) \partial_\xi(\xi\widehat{u^2})(0,t'))\in H^1(\R),
$$
for all $t\in\R$. We claim that for all $t\in\R$,
\begin{equation}
E_2(t,\cdot,\widehat{u}_0),\,\, E_3(t,\cdot,\widehat{u}_0)\in H^1(\R).
  \label{t583}
 \end{equation}

It suffices to consider $E_3.$ So

\begin{equation}
\|E_3\|_2\leq c_t \|u_0\|_{3a+2,2},
\end{equation}
and
\begin{equation}
 \begin{aligned}
\|\partial_\xi E_3\|_2&\leq c_t (\|\langle \xi\rangle^{4a+3}\widehat{u}_0\|_{2}+\|\langle \xi\rangle ^{3a+2}\partial_\xi \widehat{u}_0\|_2)\\
&\leq c_t (\|\langle \xi\rangle ^{4a+3}\widehat{u}_0\|_{2}+\|\partial_\xi(\langle \xi\rangle ^{3a+2} \widehat{u}_0)\|_2)\\
&\leq c_t(\|u_0\|_{4a+3,2}+\|J_\xi(\langle \xi\rangle ^{3a+2} \widehat{u}_0)\|_2)\\
&\leq c_t(\|u_0\|_{4a+3,2}+\|J_\xi ^4 \widehat{u}_0\|^{\frac{1}{4}}_2\|(\langle \xi\rangle ^{4a+\frac83} \widehat{u}_0)\|^{\frac{3}{4}}_2)\\
&\leq c_t( \|u_0\|_{4a+3,2}+\|\langle x\rangle ^{4} u_0\|^{\frac{1}{4}}_2\|J^{4a+\frac83} u_0\|^\frac{3}{4}_2).
 \end{aligned}
\end{equation}

Since $4a+\frac83\leq (\frac{7}{2}+a)(1+a),$ and $4\leq \frac{7}{2}+a$, the claim is proved.

A similar argument and \eqref{t8smooth} show that for $j=2,3$
\begin{equation}
\int_0^tE_j(t-t',\xi, \xi \widehat{u^2}(\xi,t'))dt'\in H^1(\R),
\end{equation}
for all $t\in\R$.  Let us see now that for $j=6,9,$
\begin{equation}
E_j(t,\cdot,\widehat{u}_0)\in H^1(\R),
\end{equation}
for all $t\in\R$. In both cases the proof is similar so we just consider the case $j=9$. So

\begin{equation}
 \begin{aligned}
\|E_9\|_2&\leq c_t \|\langle \xi\rangle ^{a} \partial_\xi^2\widehat{u}_0\|_2\\
&\leq c_t(\|\langle \xi\rangle ^{a} J_\xi^2\widehat{u}_0\|_2+\|\langle \xi\rangle^{a} \widehat{u}_0\|_2)\\
&\leq c_t(\|J^{a} \langle x\rangle ^{2}u_0\|_2+\|J^{a} {u}_0\|_2),
 \end{aligned}
\end{equation}
and
\begin{equation}
 \begin{aligned}
\|\partial_\xi& E_9\|_2\leq c_t ( \||\xi|^{a-1} \partial_\xi^2\widehat{u}_0\|_2
+\|\langle \xi\rangle ^{1+2a} \partial_\xi^2\widehat{u}_0\|_2\|\langle \xi\rangle ^{a} \partial_\xi^3\widehat{u}_0\|_2)\\
&\leq c_t(\| \partial_\xi^2\widehat{u}_0\|_\infty+\|\langle \xi\rangle ^{1+2a} J_\xi^2\widehat{u}_0\|_2\|J_\xi^3\langle \xi\rangle^{a}\widehat{u}_0\|_2
+\|\langle \xi\rangle ^{1+2a}\widehat{u}_0\|_2)\\
&\leq c_t(\| x^2{u}_0\|_{1,2}+\|J^{2a+1} \langle x\rangle ^{2}u_0\|_2+\|\langle x\rangle ^{3}J^{a} {u}_0\|_2+\| {u}_0\|_{2a+1,2})\\
&\leq c_t(\| x^2{u}_0\|_{2}+\|J^{2a+1} \langle x\rangle ^{2}u_0\|_2+\|\langle x\rangle ^{3}J^{a} {u}_0\|_2+\| {u}_0\|_{2a+1,2})),
 \end{aligned}
\end{equation}
which can be bounded by interpolation as well.

Also, a similar argument and \eqref{t8smooth} shows again that for $j=6,9$
\begin{equation}
\int_0^tE_j(t-t',\xi, \xi \widehat{u^2}(\xi,t'))dt'\in H^1(\R),
\end{equation}
for all $t\in\R$.  By hypotheses \eqref{t8hyp}, \eqref{t8smooth} and \eqref{t8hyp2} we have

\begin{claim2}
Let $\alpha=a-\frac12\in(0,\frac12)$ and $j=4,7,8$ and $11$. Then
\begin{equation}
E_j(t-t',\xi,\partial_\xi (\xi \widehat{u^2})), \,\,\int_0^tE_j(t-t',\xi \widehat{u^2})(\xi,t')dt'\in H^\alpha(\R),
\label{claim2}
\end{equation}
for all $t\in\R.$
\end{claim2}

{\it Proof of Claim 2}:
Due to the form of the terms, by interpolation  it suffices  to consider the cases $j=4$ and $j=11$. Thus,
\begin{equation}
\|E_4\|_2\leq c_t \||\xi|^{4(1+a)}\widehat{u}_0\|_{2}\leq c_t \|u_0\|_{4(1+a),2},
\label{t8E4y11}\end{equation}
and
\begin{equation}
\|E_{11}\|_2\leq c_t \|\partial_\xi^{4}\widehat{u}_0\|_{2}\leq c_t \|\langle \xi\rangle ^{4}u_0\|_{2},
\end{equation}
and hence both quantities are finite. Now
\begin{equation*}
 \begin{aligned}
\|D^\alpha_\xi E_4\|_2&\leq c_t( \|
|\xi|^{1+a}\widehat{u}_0\|_{2}+\|
|\xi|^{(4+\alpha)(1+a)}\widehat{u}_0\|_{2}
+\|\mathcal D^\alpha_\xi( |\xi|^{4(1+a)}\widehat{u}_0)\|_{2})\\
&\leq c_t(\|\langle \xi\rangle^{(4+\alpha)(1+a)}\widehat{u}_0\|_{2}
+\|\mathcal D^\alpha_\xi( |\xi|^{4(1+a)}\widehat{u}_0)\|_{2})\\
&\leq c_t(\| {u}_0\|_{(4+\alpha)(1+a),2}+\|\mathcal D^\alpha_\xi(
|\xi|^{4(1+a)}\widehat{u}_0)\|_{2}),
 \end{aligned}
\end{equation*}
but
\begin{equation}
 \begin{aligned}
\|\mathcal D^\alpha _\xi( &|\xi|^{4(1+a)}\widehat{u}_0)\|_{2}\leq \|\mathcal D_\xi ^{\alpha}(\frac{|\xi|^{4(a+1)}}{\langle \xi\rangle ^{4(1+a)}}{\langle \xi\rangle}^{4(1+a)} \widehat{u} _0)\|_2\\
&\leq c(\|{\langle \xi\rangle}^{4(1+a)} \widehat{u} _0\|_{2}
+\|\mathcal{D}_\xi ^{\alpha} (\langle \xi\rangle ^{4(1+a)}\widehat{u}_0)\|_2)\\
&\leq c(\|J^{4(1+a)}u_0\|_{2}+\|J_\xi ^{\alpha}(\langle \xi\rangle^{4(1+a)}{u} _0)\|_2)\\
&\leq  c(\|u_0\|_{4(1+a),2}+\|\langle x\rangle^\alpha J^{4(1+a)}{u} _0\|_2)\\
&\leq c(\|u_0\|_{4(1+a),2}+\|\langle x\rangle^{4+\alpha}u_0\|^{\frac{\alpha}{4+\alpha}}_2\|J^{(4+\alpha)(a+1)}u_0\|^{\frac{4}{4+\alpha}}_2),
\end{aligned}
\end{equation}
therefore
\begin{equation}
\|D^\alpha_\xi E_4\|_2\leq c(\|u_0\|_{(4+\alpha)(1+a),2}+\|\langle x\rangle^{4+\alpha}u_0\|_2),
\end{equation}
which is finite by the hypothesis at $t_1=0$. Also
\begin{equation}\label{t8E4y11last}
 \begin{aligned}
\|D^\alpha _\xi &E_{11}\|_2\leq c_t(
\|\partial_\xi^{4}\widehat{u}_0\|_{2}
+\|\xi|^{\alpha(1+a)}\partial_\xi^4\widehat{u}_0\|_{2}
+\|\mathcal D^\alpha _\xi( \partial_\xi^4\widehat{u}_0)\|_{2})\\
&\leq c_t(\|\langle x\rangle^{4}u_0\|_{2}+\|\langle \xi\rangle^{\alpha(1+a)}\partial_\xi^4\widehat{u}_0\|_{2}
+\||x|^{4+\alpha}u_0\|_{2})\\
&\leq c_t(\|J^{\alpha(1+a)}(x^4u_0)\|_{2}+\|\langle x\rangle^{4+\alpha}u_0\|_{2})\\
&\leq c_t(\|J^{\alpha(1+a)}(\langle x\rangle^{4}u_0)\|_{2}+\|J^{\alpha(1+a)}(\langle x\rangle^{2}u_0)\|_{2}\\
&\;\;\;\;+\|J^{\alpha(1+a)}u_0\|_{2}
+\|\langle x\rangle^{4+\alpha}u_0\|_{2})\\
&\leq c_t(\|\langle
x\rangle^{4+\alpha}u_0\|^{\frac{4}{4+\alpha}}_2\|J^{(4+\alpha)(a+1)}u_0\|^{\frac{\alpha}{4+\alpha}}_2\\
&\;\;\;\;+\|\langle x\rangle^{4}u_0\|^{\frac{1}{2}}_2\|J^{2\alpha(a+1)}u_0\|^{\frac{1}{2}}_2\\
&\;\;\;\;+\|J^{\alpha(1+a)}u_0\|_{2}
+\|\langle x\rangle^{4+\alpha}u_0\|_{2})\\
&\leq c_t(\|u_0\|_{(4+\alpha)(1+a),2}+\|\langle x\rangle^{4+\alpha}u_0\|_{2}).
\end{aligned}
\end{equation}

Combining \eqref{t8E4y11}--\eqref{t8E4y11last} and \eqref{t8smooth} it follows that
\begin{equation}
\int_0^tE_j(t-t',\xi,\xi \widehat{u^2})(\xi,t')dt'\in H^{\alpha}(\R),
\end{equation}
for all $t\in\R$ for $j=4, 11,$ and consequently for $j=7,8,9$ as well.

Summing up, we can conclude that
\begin{equation}\label{cond5}
\begin{split}
&D_\xi^{\alpha}\partial_{\xi}^4\widehat{u}(\cdot,t)\in L^2(\R)\text{\hskip10pt if and only if}\\
&D_\xi^{\alpha}\Big(t|\xi|^{a-1}\chi(\xi)\partial_\xi\widehat{u}_0(0)\!-\!\!\int_0^t\!\!(t-t')
|\xi|^{a-1}\chi(\xi)\partial_\xi\widehat{u^2}(0,t')dt'\Big)\in L^2(\R),
\end{split}
\end{equation}
for any fixed $t\in\R$. We observe that
\begin{equation}
\partial_\xi \widehat{u}_0(0)=\widehat{-ixu_0}(0)=-i\int xu_0(x)\,dx,
\label{t8mom1}
\end{equation}
and by \eqref{moment1}
\begin{equation}
\begin{split}
\partial_\xi (i\frac{\xi}{2}\widehat{u}^2)(0,t')&=\widehat{-ixu\partial_x u}(0,t')=-i\int xu\partial_xu(x,t')\,dx,\\
&=\frac{i}{2}\|u(t')\|^2_2=\frac{i}{2}\|u(0)\|^2_2=i\frac{d}{dt}\int xu(x,t) dx,
\end{split}
\end{equation}
and by integration by parts
\begin{equation}\label{t8ibyp}
 \begin{split}
 &t_2\partial_\xi \widehat{u}_0(0)-\frac{i}{2}\int_0^{t_2}\,(t_2-t')\partial_\xi (\xi\widehat{u}^2)(0,t')dt'\\
 &=-it_2\int xu_0\,dx\!-\! i\int_0^{t_2}\,(t_2-t')\frac{d}{dt'}\int xu(x,t') dx\,dt'\\
 &=-it_2\int xu_0(x)dx- i(t_2-t')\int xu(x,t')dx\Big|_{t'=0}^{t'=t_2}\\
 &\;\;\;-\!i\int_0^{t_2}\!\!\int xu(x,t') dxdt'\\
 &=-i\int_0^{t_2}\,\int xu(x,t') dx\,dt'.
 \end{split}
\end{equation}

Thus from our hypothesis at $t=t_2,$ it follows that
\begin{equation}\label{t8conclus}
D_\xi^{\alpha}(\chi(\xi)|\xi|^{a-1})\int_0^{t_2}\,\int xu(x,t') dx\,dt'\in L^2(\R),
\end{equation}
but we recall that $\alpha=a-1/2$ and from Proposition \ref{prop3}

\begin{equation*}
D_\xi^{\alpha}(\chi(\xi)|\xi|^{a-1})\approx \mathcal D_\xi^{\alpha}(\chi(\xi)|\xi|^{\alpha-1/2})
\notin L^2(\R)\,\,\text{if} \,\,\alpha\in(0,1).
\end{equation*}
Therefore for \eqref{t8conclus} to hold is necessary that
 \begin{equation}
 \int_0^{t_2}\,\int xu(x,t') dx\,dt'=0,
 \label{t8conclus2}
 \end{equation}
 and since $F(t)=\int xu(x,t) dx$ is a continuous function, there exists $\tilde{t}_1\in (0,t_2)$
 where $F(t)$ must vanish and this completes the proof.
 \end{section}


 \begin{section}{Proof of Theorem \ref{theorem9}}\label{proof9}
 Without loss of generality we can assume that
 \begin{equation}
 t_1=0 \qquad\text{and}\qquad \int xu_0(x) dx=0.
 \end{equation}

Thus in this case, combining \eqref{t8mom1}, \eqref{t8ibyp}-\eqref{t8conclus2} and \eqref{moment2},
we have for $t_2\neq 0$ that

\begin{equation}\label{}
\begin{aligned}
 &\mathcal D_\xi^{\alpha}\partial_{\xi}^4\widehat{u}(\cdot,t)\in L^{2}(\R), \,\,\,\,\,\text {if and only if}\\
&\int_0^{t_2}\,\int xu(x,t') dx\,dt'=0, \,\,\,\,\,\text {if and only if}\\
&\int_0^{t_2}\frac12 t'\|u_0\|_2^2 dt'=\frac{t_2^2}{4}\|u_0\|_2^2=0,\,\,\,\,\,\text {if and only if}\\
& \|u_0\|_2^2=0\,\,\,\,\text {if and only if}\,\,\,\, u_0=0.
\end{aligned}
\end{equation}
\end{section}


\begin{section}{Proof of Theorem \ref{theorem10}}\label{proof10}
We shall consider only the case $a\in[1/2,1),$ so that $\tilde a=1.$

From the argument of the proof in Theorem \ref{theorem8}, with $\alpha=1/2$ in \eqref{t8E4y11}--\eqref{t8E4y11last}
and \eqref{t8smooth}, we can conclude from our hypothesis $s\geq (\frac72+a)(1+a)+\frac{1-a}{2},$ that
for $t\neq 0$

\begin{equation*}\label{}
\begin{aligned}
 D_\xi^{1/2}\partial_{\xi}^4\widehat{u}(\cdot,t)\in L^{2}(\R), \,\,\,\,\,&\text{if and only if}\\
D_\xi^{1/2}\Big(t|\xi|^{a-1}\chi(\xi)\partial_\xi \widehat{u}_0(0)-&\int_0^t(t-t')|\xi|^{a-1}\chi(\xi)\partial_\xi(\xi\widehat{u^2}(0,t))dt'\Big)\in L^2(\R),\\
&\text{if and only if}\\
D_\xi^{1/2}(\chi(\xi)|\xi|^{a-1})&\int_0^{t}\,\int xu(x,t') dx\,dt'\in L^2(\R),\\
&\text{if and only if}\\
\int_0^{t}\,\int xu(x,t') dx\,dt'&=\int_0^{t}(\int xu_0(x)\,dx+\frac12 t'\|u_0\|_2^2) dt'\\
&=t(\int xu_0(x)\,dx+\frac14 t\|u_0\|_2^2)=0.
\end{aligned}
\end{equation*}
\end{section}


\end{document}